 \DeclareMathAlphabet{\mathpzc}{OT1}{pzc}{m}{it}
 \newtheorem{theorem}{Theorem}[section]
 \newtheorem{lemma}[theorem]{Lemma}
 \newtheorem{proposition}[theorem]{Proposition}
 \newtheorem{definition}[theorem]{Definition}
  \theoremstyle{definition}
 \newtheorem{example}[theorem]{Example}
 \newtheorem{remark}[theorem]{Remark}
\newtheorem*{theoremnn}{Theorem}
\newtheorem*{TheoremA}{Theorem A}
\theoremstyle{definition}
\newtheorem*{acknowledgements}{Acknowledgements}
\renewenvironment{proof}{\noindent{\it
Proof.}}{\bgroup\hspace{\stretch{1}}$\square$\egroup\medskip\par}
\newcommand{\Rep}{\mathsf{Rep}}
\newcommand{\id}{\mathrm{id}}
\newcommand{\End}{\mathsf{End}}
\newcommand{\Loc}{\operatorname{Loc}}
\newcommand{\hol}{\mathsf{hol}}
\newcommand{\A}{\mathsf{A}}
\newcommand{\fr}{\mathcal{F}(E,\partial)}
\newcommand{\ZZ}{\ensuremath{\mathbb{Z}}}
\newcommand{\RR}{\ensuremath{\mathbb{R}}}
\newcommand{\Rcal}{\mathcal{R}}
\newcommand{\gl}{\mathcal{GL}(E,\partial)}
\newcommand{\front}{P}
\newcommand{\back}{Q}
\begin{document}


\def\bleft{\draw(x) -- (y) -- (z)--(W);}
\def\bright{\draw(x) -- (y) -- (z)--(w);}

\vspace{15cm}
 \title{The higher Riemann-Hilbert correspondence and principal 2-bundles}
\author{Camilo Arias Abad\footnote{Universidad Nacional de Colombia, Medell\'in. email: camiloariasabad@gmail.com }\; and Sebasti\'an V\'elez V\'asquez\footnote{Universidad Nacional de Colombia, Medell\'in. email: svelezv@unal.edu.co } }

 \maketitle
 \begin{abstract}
In this note we show that the holonomies for higher local systems provided by the higher Riemann-Hilbert correspondence coincide with those associated to principal 2-bundles where both formalisms meet.
\end{abstract}
\tableofcontents

\section{Introduction}\label{intro}

The simplest version of the Riemann-Hilbert correspondence states that representations of the fundamental groupoid $\pi_1(M)$ of a manifold correspond naturally to flat vector bundles on $M$. This is an instance of Lie theory, the correspondence between representations of the groupoid $\pi_1(M)$, and those of its Lie algebroid $TM$. These structures are known as local systems, since they can be used as coefficients for cohomology. By fixing a base point $ \ast \in M$, one can identify the representations of $\pi_1(M)$ with those of
$\pi_1(M,\ast)\simeq \pi_0({\bf{\Omega}}(M))$, the group of connected components of  the based loop space of ${\bf \Omega}(M)$. There are natural identifications:
\[ \mathsf{Rep}(\pi_1(M, \ast))\simeq \mathsf{Rep}(\pi_0({\bf{\Omega}}(M)))\simeq H_0({\bf{\Omega}}(M))\text{-}\mathsf{Mod}.\]
Therefore, one can equivalently think of local systems as modules over the first homology of the based loop space.\\

The theory of higher local systems is what arises when one replaces the fundamental groupoid $\pi_1$ by the infinity groupoid $\pi_\infty$.
Each of the characterizations of local systems given above admit a natural {\it higher dimensional }generalization:
\begin{itemize}
\item[(A)] Flat vector bundles are replaced by flat superconnections.
\item[(B)] Representations of $\pi_1$ are replaced by representations of the infinity groupoid $\pi_\infty$.
\item[(C)] Modules over $H_0({\bf \Omega}(M))$ are replaced by modules over the algebra of singular chains on the based loop space $C( {\bf \Omega}(M))$.
\end{itemize}

Each of these notions can be organized as differential graded categories $\mathsf{Loc}_\infty(M)$, $\mathsf{\Rep}(\pi_\infty(M))$ and
$C( {\bf \Omega}(M))\text{-}\mathsf{Mod}$. It turns out that the correponding notions of higher local system are equivalent.
\begin{theoremnn}
The differential graded categories  $\mathsf{Loc}_\infty(M)$, $\mathsf{\Rep}(\pi_\infty(M))$ and
$C( {\bf \Omega}(M))\text{-}\mathsf{Mod}$ are $\mathsf{A}_\infty$-equivalent.
\end{theoremnn}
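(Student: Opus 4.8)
The plan is to prove the theorem by assembling a zigzag of $\mathsf{A}_\infty$-functors with the representation category $\mathsf{Rep}(\pi_\infty(M))$ as the central object: one leg relates it to $\mathsf{Loc}_\infty(M)$ via Chen's iterated integrals, the other relates it to $C(\mathbf{\Omega}(M))\text{-}\mathsf{Mod}$ via the cobar construction on the singular chains of $M$. Throughout I would model $\pi_\infty(M)$ by the simplicial set of smooth singular simplices and interpret a representation as an $\mathsf{A}_\infty$-functor into complexes of vector spaces, or equivalently --- after passing to chains and using the Alexander--Whitney diagonal --- as an $\mathsf{A}_\infty$-comodule over the chain coalgebra $C(M)$.

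For the equivalence between (A) and (B), I would construct the higher holonomy functor $\mathsf{RH}\colon \mathsf{Loc}_\infty(M)\to\mathsf{Rep}(\pi_\infty(M))$ sending a flat superconnection $(E,\mathbb{D})$ to the assignment that attaches to a smooth simplex $\sigma\colon\Delta^k\to M$ the iterated integrals over $\Delta^k$ of the components of the pulled-back form $\sigma^\ast\mathbb{D}$. The flatness condition $\mathbb{D}^2=0$ together with Stokes' theorem forces the $\mathsf{A}_\infty$-functor identities; this is Gugenheim's $\mathsf{A}_\infty$ refinement of de Rham's theorem, together with its extension to superconnections. To upgrade $\mathsf{RH}$ to a quasi-equivalence I would argue locally: over a contractible chart flatness trivializes the superconnection up to gauge and every smooth simplex is contractible, so both dg categories reduce to the dg category of perfect complexes of vector spaces; one then globalizes by a Mayer--Vietoris/homotopy-descent argument over a good cover, which is possible precisely because the comparison is an $\mathsf{A}_\infty$- rather than a strict functor.

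For the equivalence between (B) and (C), I would fix a basepoint $\ast\in M$ and exploit the based path fibration $\mathbf{\Omega}(M)\to\mathbf{P}_\ast(M)\to M$ with $\mathbf{P}_\ast(M)$ contractible. Taking singular chains gives a one-sided bar/cobar model, and Adams' theorem --- in the generality of Rivera--Zeinalian, which removes the simple-connectivity hypothesis by localizing $C(M)$ at its $1$-simplices --- identifies $C(\mathbf{\Omega}(M))$ with the cobar construction $\mathsf{Cobar}(C(M))$ on the Alexander--Whitney coalgebra. Bar--cobar (Koszul) duality then produces an $\mathsf{A}_\infty$-equivalence between $\mathsf{A}_\infty$-comodules over $C(M)$ and $\mathsf{A}_\infty$-modules over $\mathsf{Cobar}(C(M))\simeq C(\mathbf{\Omega}(M))$, and the comodule side is by construction $\mathsf{Rep}(\pi_\infty(M))$. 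Tracing through the composite $\mathsf{Loc}_\infty(M)\to C(\mathbf{\Omega}(M))\text{-}\mathsf{Mod}$, a flat superconnection goes to the module with underlying complex the fiber $E_\ast$ and $C(\mathbf{\Omega}(M))$-action given by iterated-integral holonomy around based loops, recovering Chen's parallel transport.

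The main obstacle is the globalization of $\mathsf{RH}$, i.e. promoting the iterated-integral functor from ``well defined'' to ``quasi-equivalence'': the local statement is essentially formal, but patching requires homotopy-coherent descent for both $\mathsf{Loc}_\infty$ and $\mathsf{Rep}(\pi_\infty)$ along a good cover, together with a cofinality argument comparing smooth and continuous simplices. A secondary subtlety is the basepoint dependence in (C): one must check that different basepoints yield equivalent module categories, compatibly with the change-of-basepoint action of $\pi_\infty(M)$, which follows from the fact that the path category of $M$ identifies the fibers of the path fibration over different points.
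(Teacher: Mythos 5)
Your outline follows essentially the same route as the paper, which does not prove this statement itself but presents it as the conjunction of the cited literature: the leg $\mathsf{Loc}_\infty(M)\simeq\mathsf{Rep}(\pi_\infty(M))$ is the higher Riemann--Hilbert correspondence of \cite{Block}, obtained exactly as you describe by pushing Maurer--Cartan elements along Gugenheim's $\mathsf{A}_\infty$ de Rham map \cite{Gugenheim} built from Chen's iterated integrals, and the leg to $C(\mathbf{\Omega}(M))\text{-}\mathsf{Mod}$ is the cobar/Adams comparison in the non-simply-connected generality of \cite{Rivera} and \cite{Holstein}. Just be aware that your text is a plan rather than a proof: the two genuinely hard steps you flag yourself (upgrading the iterated-integral functor to a quasi-equivalence by homotopy descent, and the basepoint-free Adams theorem) are precisely the content of those references and are not established by the sketch.
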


The theorem above is the conjunction of several works, including \cite{Block}, \cite{Rivera}, \cite{Holstein}, \cite{abadzgraded}, \cite{abadrham}, \cite{igusa}.
The equivalence between   $\mathsf{Loc}(X)$ and $\mathsf{\Rep}(\pi_\infty(M))$, proved by Block and Smith in \cite{Block}, is known as the higher Riemann-Hilbert correspondence. This theorem amounts to constructing higher dimensional versions of holonomies for superconnections. This problem turns out to be related to de Rham's theorem. The natural integration map:
\[ \psi: \Omega(M) \rightarrow C^{\bullet}(M), \,\,\,\, \varphi(\eta) (\sigma):= \int_{\Delta} \sigma^*(\eta),\]
induces an isomorphism of algebras in cohomology, even though it is not an algebra map. A more satisfying explanation of this situation
was provided by Gugenheim in \cite{Gugenheim}, who proved the following.

\begin{theoremnn}
The map $\psi$ can be naturally extended to an $\mathsf{A}_\infty$ equivalence $\overline{\psi}: \Omega(M) \rightarrow C^{\bullet}(M)$.
\end{theoremnn}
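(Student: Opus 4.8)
\emph{Proof proposal.} The plan is to produce the collection $\overline{\psi}=\{\psi_k\colon\Omega(M)^{\otimes k}\to C^{\bullet}(M)\}_{k\ge 1}$ of linear maps of degree $1-k$, with $\psi_1=\psi$, satisfying for every $k$ and all homogeneous $\omega_1,\dots,\omega_k$ the $\mathsf{A}_\infty$-morphism equations
\begin{multline*}
\delta\,\psi_k(\omega_1,\dots,\omega_k)+\sum_{i}\pm\,\psi_k(\omega_1,\dots,d\omega_i,\dots,\omega_k)\\
=\sum_{i+j=k}\pm\,\psi_i(\omega_1,\dots,\omega_i)\smile\psi_j(\omega_{i+1},\dots,\omega_k)+\sum_{i}\pm\,\psi_{k-1}(\omega_1,\dots,\omega_i\wedge\omega_{i+1},\dots,\omega_k),
\end{multline*}
i.e.\ the data of a morphism of differential graded algebras ``up to coherent homotopy'' between $(\Omega(M),d,\wedge)$ and $(C^{\bullet}(M),\delta,\smile)$. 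Equivalently, and more conveniently for the sign conventions, one produces a morphism of differential graded coalgebras $\mathsf{B}\overline{\psi}\colon\mathsf{B}\Omega(M)\to\mathsf{B}C^{\bullet}(M)$ between bar constructions, or — since $C^{\bullet}(M)$ is a dg algebra — a Maurer--Cartan element of the convolution dg algebra $\underline{\mathrm{Hom}}(\mathsf{B}\Omega(M),C^{\bullet}(M))$ with linear component $\psi$. A first observation is that once such a morphism is constructed it is automatically an $\mathsf{A}_\infty$-equivalence: its linear part $\psi_1=\psi$ is a chain map by Stokes' theorem ($\int_{\Delta}\sigma^{*}d\eta=\int_{\partial\Delta}\sigma^{*}\eta$), it induces an isomorphism on cohomology by de Rham's theorem, and over a field every $\mathsf{A}_\infty$-quasi-isomorphism admits an $\mathsf{A}_\infty$-inverse. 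So the content is the construction of the higher components $\psi_k$ ($k\ge 2$), which repair the failure of $\psi$ to be multiplicative, together with the verification of the structure equations.

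For the construction I would use Chen's iterated integrals. Given homogeneous forms $\omega_1,\dots,\omega_k$ and a smooth singular simplex $\sigma\colon\Delta^{\bullet}\to M$ of the dimension dictated by degree reasons, one defines $\psi_k(\omega_1,\dots,\omega_k)(\sigma)$ as the integral of the pullback of $\omega_1\wedge\cdots\wedge\omega_k$ over a compact manifold-with-corners $D_k^{\sigma}$ that fibres over $\Delta^{\bullet}$ with $(k-1)$-dimensional fibre: a point of $D_k^{\sigma}$ records a point of the simplex together with ``cutting data'' — a weakly ordered tuple $0\le t_1\le\cdots\le t_{k-1}\le 1$ dividing the simplex into $k$ consecutive pieces, on the $i$-th of which $\omega_i$ is sampled — the forms being pulled back through $\sigma$ and the ensuing cut maps $D_k^{\sigma}\to\Delta^{\bullet}\to M$ and then wedged. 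For $k=1$ the domain is $\Delta^{\bullet}$ itself with the cut map the identity, so $\psi_1=\psi$. Naturality in $M$ is then immediate, since $D_k^{\sigma}$ and the cut maps are built functorially out of $\sigma$ while the forms are merely pulled back, so every smooth map intertwines the $\psi_k$. (Equivalently, $\overline{\psi}$ can be obtained by applying the homotopy transfer theorem to Dupont's explicit contraction of the simplicial--cosimplicial de Rham complex $\prod_{\sigma}\Omega^{\bullet}(\Delta^{|\sigma|})$ onto $C^{\bullet}(M)$, precomposed with the strict algebra map restricting a global form to every simplex; I would work with whichever model keeps the sign bookkeeping cleanest.)

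The structure equations are then forced by Stokes' theorem. Applying the exterior derivative under the integral sign in $\psi_k(\omega_1,\dots,\omega_k)(\sigma)=\int_{D_k^{\sigma}}(\cdots)$ produces the terms $\psi_k(\dots,d\omega_i,\dots)$, and Stokes' theorem rewrites this as an integral over $\partial D_k^{\sigma}$. That boundary splits into three families of faces: the faces lying over $\partial\Delta^{\bullet}$, which assemble to $\delta\psi_k(\omega_1,\dots,\omega_k)$; the faces where two consecutive cutting times collide, which give the terms $\psi_{k-1}(\dots,\omega_i\wedge\omega_{i+1},\dots)$; and the faces where a cutting time crosses an interior vertex of the simplex, which — via the front-face/back-face decomposition, i.e.\ an Alexander--Whitney-type identity — recombine into the products $\psi_i(\omega_1,\dots,\omega_i)\smile\psi_{k-i}(\omega_{i+1},\dots,\omega_k)$. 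Collecting these contributions with their orientation signs is exactly the $\mathsf{A}_\infty$-morphism equation in arity $k$. I expect this last step — tracking the Koszul signs introduced by the shift $[1]$ in the bar construction, the induced orientations on the faces of $D_k^{\sigma}$ and of the simplex, and the recombination producing the cup products — to be the main obstacle; it is precisely the point at which Gugenheim's argument is most delicate. (If one wants only the existence of \emph{some} natural $\mathsf{A}_\infty$-extension of $\psi$, rather than the explicit formula, this can instead be obtained by an acyclic-models induction on $k$: the obstruction to prolonging $\psi_1,\dots,\psi_{k-1}$ is, by the lower relations, a natural $\delta$-cocycle, which by the acyclicity of $\Omega(\Delta^{n})$ and $C^{\bullet}(\Delta^{n})$ together with representability of the functors in play is a natural coboundary, yielding $\psi_k$.)
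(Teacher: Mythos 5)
The paper does not actually prove this statement --- it is Gugenheim's theorem, quoted as background with a pointer to \cite{Gugenheim} and the one-line remark that it is ``proven by an explicit construction based on Chen's iterated integrals'' --- and your sketch is precisely that construction: higher components $\psi_k$ defined as iterated integrals over a domain fibred by cutting times (in the paper's own later notation, $\Delta_k\times I^{n-1}$ with the folding map $\Theta_n$, rather than a bundle over $\Delta^n$ with simplex fibres, but this is a reparametrization), with the $\mathsf{A}_\infty$ relations extracted from Stokes' theorem applied to the boundary strata of that domain. Your proposal is therefore correct in outline and follows essentially the same route as the cited proof; the only caveat is the one you flag yourself, namely that the orientation and face bookkeeping --- which boundary strata produce the cup products via the front/back-face decomposition, versus the $\delta$-terms, versus the collision terms $\omega_i\wedge\omega_{i+1}$ --- is where all the real work of Gugenheim's argument lies.
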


Gugenheim's $\mathsf{A}_\infty$ version of de Rham's theorem is proven by an explicit construction based on Chen's iterated integrals \cite{chen}.
The holonomies required by the higher Riemann Hilbert correspondence can be constructed by pushing forward Maurer-Cartan elements along
Gugenheim's $\mathsf{A}_\infty$ map. This ultimately provides explicit formulas for higher holonomies in terms of Chen's iterated integrals.\\

Ordinary gauge theory studies principal $G$-bundles, connections and holonomies. In higher gauge theory, groups are replaced by $2$-groups, principal bundles by principal $2$-bundles, and there are holonomies associated to two dimensional surfaces. There is a large literature on higher gauge theory, see for instance \cite{Schreiber1}, \cite{Schreiber2}, \cite{Schreiber3}, \cite{Baez}, \cite{Bartels}, \cite{waldorf1}, \cite{waldorf2}, \cite{faria11} and references therein. Both the formalism of higher local systems and that of principal 2-bundles deal with generalized, higher dimensional holonomies. We note that neither is more general than the other. On the one hand, the theory of higher local systems produces holonomies for simplices of all dimensions. On the other hand, the formalism of higher gauge theory allows for more general $2$-groups. \\

The purpose of this note is to compare these two formalisms where they meet. We show that higher local systems provide examples of principal 2-bundles and the corresponding constructions for holonomies coincide. Our main result, which is a global version of \cite{abadcompare}, is as follows.

\begin{TheoremA}
The map that associates a principal $2$-bundle with connection  to a higher local system is compatible with the construction of holonomies.  More precisely, the following diagram commutes:
  \begin{equation*}\label{comparison}
  \begin{CD}
    \Loc_\infty^{(E,\partial)}(M)@> K>> 2\text{-}\mathcal Bun^f_{\fr}(M)\\
    @V\mathcal{T}VV@VVTV\\
    \text{Rep}_{\leq2}(M,(E,\partial))@>> \mathcal{K}>\text{Rep}_{\leq2}(M,\fr)
  \end{CD}
\end{equation*}
\end{TheoremA}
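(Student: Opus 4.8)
The plan is to prove commutativity of the square by unwinding all four maps into explicit formulas and then matching the two composites, the heart of the argument being the local comparison of \cite{abadcompare} globalized along the path space. First I would fix the concrete descriptions of the corners and arrows. An object of $\Loc_\infty^{(E,\partial)}(M)$ is a flat $\mathbb{Z}$-graded superconnection $\mathbb{D}=\partial+A_1+A_2+\cdots$ with $A_k\in\Omega^k(M,\End^{1-k}(E))$ and $\mathbb{D}\circ\mathbb{D}=0$. The functor $K$ sends $\mathbb{D}$ to the principal $\fr$-$2$-bundle with connection whose $2$-connection datum is the truncation $(A_1,A_2)$ — $A_1$ the connection $1$-form, $A_2$ the curving — and whose fake-flatness and structure equations are exactly the form-degree $1$ and $2$ components of $\mathbb{D}\circ\mathbb{D}=0$; away from a contractible chart this is recorded by a \v{C}ech--Deligne-type cocycle built from local frames, as in the construction of $K$ in the companion work. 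The functor $\mathcal T$ sends $\mathbb{D}$ to its higher-holonomy representation of $\pi_\infty(M)$, obtained by pushing the Maurer--Cartan element $A=A_1+A_2+\cdots$ forward along Gugenheim's $\mathsf A_\infty$ de Rham map $\overline{\psi}$ (\cite{Gugenheim}) and then truncating to $\pi_{\leq2}(M)$; $T$ is the higher-gauge-theoretic holonomy of a principal $2$-bundle with $2$-connection, i.e.\ parallel transport along paths together with surface transport along bigons; and $\mathcal K$ is the change of coefficients along the identification of the holonomy target with the structure $2$-group $\fr=\mathcal F(E,\partial)$.

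Next I would compute $\mathcal K\circ\mathcal T$ and $T\circ K$ on a fixed $\mathbb{D}$ and compare them levelwise on $\pi_{\leq2}(M)$. On $1$-simplices (paths $\gamma$) both assign the path-ordered exponential $\sum_{n\geq0}\int_{\Delta^n}\gamma^*A_1\cdots\gamma^*A_1$ of the connection $1$-form; agreement is immediate, since $K$ sends the $1$-form part of $\mathbb{D}$ to $A_1$ and since the degree-one part of $\overline{\psi}$ is Chen's iterated integral of $A_1$. On $2$-simplices (homotopies of paths, equivalently bigons $\Sigma$) the holonomy coming from $\mathcal T$ is the Chen iterated integral mixing a single insertion of $A_2$ with arbitrarily many insertions of $A_1$, produced by the degree-two component of $\overline{\psi}$, while the holonomy coming from $T$ is the surface-ordered integral $\int_\Sigma \hol_{A_1}\cdot\, A_2\cdot\hol_{A_1}$ of the curving conjugated by parallel transport. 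The equality of these two expressions on each chart is precisely \cite{abadcompare}; I would quote that computation and then verify the two points needed to upgrade it to a global statement: (i) compatibility with the composition and whiskering operations — using the $\EZ$--$\AW$ relations on the chain side and the interchange law for surface transport on the $2$-bundle side — so that the assignment is a genuine $2$-functor and not merely a map on generators; and (ii) independence of the auxiliary local trivializations entering the cocycle description of $K(\mathbb{D})$, so that $T(K(\mathbb{D}))$ represents the same object of $\text{Rep}_{\leq2}(M,\fr)$ that one obtains intrinsically from the smooth path space. Naturality in $(E,\partial)$ then follows formally, since every map in sight is built functorially from $\overline{\psi}$ and from change of frames, so that checking it on morphisms of superconnections again reduces to a formula comparison.

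I expect the main obstacle to be reconciling the two globalization mechanisms. The Riemann--Hilbert side is intrinsically global — Gugenheim's map needs no cover, only the smooth path and loop spaces of $M$ — whereas the $2$-bundle side is naturally presented by a \v{C}ech--Deligne cocycle relative to a good cover, so that $T$ is a priori defined only up to the coherence data of $\fr$: its associator, the thin-homotopy invariance of surface transport, and the gauge ambiguity in the cocycle. The delicate point is to show that all of this coherence washes out and leaves exactly the Chen formula, rather than the Chen formula twisted by a nontrivial coboundary; concretely this amounts to checking that the $1$- and $2$-cocycle conditions coming from $\mathbb{D}\circ\mathbb{D}=0$ match, term by term, the coherence identities satisfied by the truncated $\mathsf A_\infty$ functor $\overline{\psi}$, which is where the flatness of the superconnection is fully used. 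A secondary but routine step is to bound the truncation error: one must observe that the components $A_3,A_4,\dots$ of $\mathbb{D}$ contribute nothing to holonomy in degrees $\leq2$, which is clear from form-degree counting on $\Delta^1$ and $\Delta^2$ but deserves to be stated explicitly.
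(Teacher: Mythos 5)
Your plan is essentially the paper's own argument: the authors also reduce the comparison to checking the two functors on $\pi_{\leq 2}(M)$ levelwise, note that on $1$-simplices both sides are ordinary parallel transport, and handle $2$-simplices by solving the surface-transport ODE (their equation for $h(s)$), rewriting the solution as a Chen-type iterated integral in $a_s$ and $b_s$, and identifying it with $\mathsf{hol}(\sigma)$ via the reparametrization $\Theta_2$ and the same degree-counting that kills $A_3, A_4, \dots$ — i.e.\ exactly the local comparison of \cite{abadcompare} that you propose to quote, carried out explicitly in their three lemmas. Your globalization worries are resolved in the paper by citing the equivalence of the local and global definitions of $2$-bundle transport from \cite{waldorf2}, so no genuinely different route is involved.
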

\begin{acknowledgements}
We would like to acknowledge the support of Colciencias through  their grant {\it Estructuras lineales en topolog\'ia y geometr\'ia}, with contract number FP44842-013-2018.  We also thank  the Alexander von Humboldt foundation which supported our work through the Humboldt Institutspartnerschaftet { \it Representations of Gerbes and higher holonomies}. We thank Santiago Pineda for several conversations at early stages of this work, and Konrad Waldorf for his hospitality during our visits to Greifswald. This note was prepared for the proceedings of the meeting  {\it Geometry in Algebra and Algebra in Geometry,} held in Medell\'in in 2019. We would like to thank the organizers for the good time we had, and for the invitation to speak.
\end{acknowledgements}

\section{Principal 2-bundles and parallel transport}\label{c4s1}
In this subsection we review the formalism of connections on principal 2-bundles and the corresponding notions of connections and parallel transport. This presentation follows closely the work of Waldorf in \cite{waldorf1} and \cite{waldorf2}.

\begin{definition}
  A strict Lie 2-group is a small 2-category with a single object in which the set of 1-morphisms and the set of 2-morphisms are Lie groups, and all structure maps are Lie group homomorphisms.
\end{definition}

Strict 2-groups can be conveniently described in terms of crossed modules.

\begin{definition}
  A Lie Crossed Module $\Gamma$ is a tuple $\Gamma=(H, G, \tau,\alpha)$, where $H$, $G$ are Lie groups, $\tau: H\to G$ is a Lie group homomorphism and $\alpha: G\times H \to H$ is an action such that $\tau(\alpha(g,h))=g\tau(h) g^{-1}$ and $\alpha(\tau(h),h')=hh'h^{-1}$, for all $g\in G$ and $h, h'\in H$.
  \end{definition}

  There is natural correspondence between Lie crossed modules and strict 2-groups.
  Given a Lie crossed module $\Gamma=(H, G, \tau, \alpha)$, there is a Lie 2-group $\overline{\Gamma}$ which has $G$ as the space of 1-morphisms and $H \rtimes G$ as the space of  2-morphisms. The source and target maps on $\overline{\Gamma}$ are  given by $s(h,g)=g$ and $t(h,g)=\tau(h)g$, and the composition by the product on $G$. The correspondence $\Gamma \mapsto \overline{\Gamma}$ is bijective and we will abuse the notation and simply write $\Gamma$ for both the Lie crossed module and the corresponding 2-group.

\begin{definition}
  A differential Lie crossed module $\gamma$ is a tuple $\gamma=(\mathfrak{h}, \mathfrak{g}, t,a)$ where $\mathfrak{h}$ and $\mathfrak{g}$ are Lie algebras, $t:\mathfrak{h}\to\mathfrak{g}$ is a Lie algebra homomorphism and $a:\mathfrak{g} \rightarrow \mathsf{Der}(\mathfrak{h})$ is an action by derivations such that $a(t(Y_1)(Y_2))=[Y_1,Y_2]$ and $t((aX)(Y))=[X,t(Y)]$ for $X\in\mathfrak{g}$ and $Y_1,Y_2,Y\in\mathfrak{h}$.
\end{definition}
Clearly, by differentiating a  Lie crossed module one obtains a differential Lie crossed module.\\

Let us now briefly recall some definitions regarding Lie groupoids and anafunctors between them.
 A right action of a Lie groupoid $\mathcal X$ on a smooth manifold $M$ is an anchor map $\alpha: M\to\text{Obj}(\mathcal X)$ together with an action map $\circ:M\prescript{}{\alpha}{\times}_t\text{Mor}(\mathcal X)\to M$ that satisfy the following conditions
  \[
  (x\circ f)\circ g=x\circ(f\circ g),\quad x\circ\text{id}_{\alpha(x)}=x,\quad \alpha(x\circ g)=s(g),\quad x\in M,\; f,g\in\text{Mor}(\mathcal X).
  \]
  A left action is  a right action of the opposite Lie groupoid.
  A principal $\mathcal X$-bundle over a manifold $M$ is another manifold $P$ with a surjective submersion $\pi:P\to M$ and a right action of $\mathcal X$ on $P$ that preserves the projection and such that the map $P\prescript{}{\alpha}{\times}_t\text{Mor}(\mathcal X)\to P\times_M P$ given by $(p,f)\mapsto(p,p\circ f)$ is a diffeomorphism.
  Given Lie groupoids $\mathcal X,\;\mathcal Y$, an anafunctor $F:\mathcal X\to\mathcal Y$ is a smooth manifold $F$ called the total space, with a left action of $\mathcal X$ and a right action of $\mathcal Y$ such that the actions commute and the left anchor $\alpha_l:F\to\text{Obj}(\mathcal X)$ with the right action of $\mathcal Y$ is a principal $\mathcal Y$-bundle.
  Given anafunctors $F,F':\mathcal X\to\mathcal Y$, a transformation $f:F\Rightarrow F'$ is a smooth function $f:F\to F'$ that is $\mathcal X$-equivariant, $\mathcal Y$-equivariant and preserves the anchor maps.

\begin{remark}
  Any smooth functor $\mathcal F:\mathcal X\to\mathcal Y$ may be used to define an anafunctor $F:\mathcal X\to\mathcal Y$ with total space $F:=\text{Obj}(\mathcal X)\prescript{}{\mathcal F}{\times}_t\text{Mor}(\mathcal Y)$, anchors $\alpha_l(x,f):=x$ and $\alpha_r(x,f):=s(f)$ and actions $g\circ(x,f):=(t(g),\mathcal Fg\circ f)$ and $(x,f)\circ g:=(x,f\circ g)$. Thus any smooth functor may be considered an anafunctor. Similarly, a smooth natural transformation $\mathcal T:\mathcal F\Rightarrow \mathcal F'$ induces a transformation between the respective anafunctors given by $f_{\mathcal T}(x,g):=(x,\mathcal T(x)\circ g)$.
\end{remark}

\begin{definition}
  A \emph{right action} of a Lie 2-group $\Gamma$ on a groupoid $\mathcal X$ is a functor $R:\mathcal X \times \Gamma \to \mathcal X$ such that $R(p,1)=p$ and $R(\rho, \id_1)= \rho$, for all $p\in \text{Obj}(\mathcal X)$ and $\rho\in \text{Mor}(\mathcal X)$. Furthermore, if $m:\Gamma\times\Gamma\to \Gamma$ is the multiplication functor, then the following diagram must be commutative:
\[
\begin{CD}
\mathcal X\times\Gamma\times\Gamma @>\id_{\mathcal X}\times m>> \mathcal X\times\Gamma\\
@VR\times \id_{\Gamma}VV@VVRV\\
\mathcal X\times\Gamma @>>R> \mathcal X
\end{CD}
\]
If $\mathcal X$ and $\mathcal Y$ are groupoids with smooth right $\Gamma$-actions, a $\Gamma$-equivariant anafunctor $F:\mathcal X\to\mathcal Y$ is an anafunctor with a smooth action $\rho:F\times\text{Mor}(\Gamma)\to F$ that preserves the anchors, i.e. makes the following diagrams commutative
\[
\begin{CD}
  F\times\text{Mor}(\Gamma)@>\rho>>F\\
  @V\alpha_l\times tVV@VV\alpha_l V\\
 \text{Obj}(\mathcal X)\times\text{Obj}(\Gamma)@>>R>\text{Obj}(\mathcal X)
\end{CD}
\qquad\begin{CD}
  F\times\text{Mor}(\Gamma)@>\rho>>F\\
  @V\alpha_r\times sVV@VV\alpha_r V\\
 \text{Obj}(\mathcal Y)\times\text{Obj}(\Gamma)@>>R>\text{Obj}(\mathcal Y)
\end{CD}.
\]
Furthermore, $\rho$ must be compatible with the actions of $\mathcal X$ and $\mathcal Y$, which means that
\[
\rho(\chi\circ f\circ \eta,\gamma_l\circ\gamma\circ\gamma_r)=R(\chi,\gamma_l)\circ\rho(f,\gamma)\circ R(\eta,\gamma_r),
\]
for $\chi\in\text{Mor}(\mathcal X)$, $\eta\in\text{Mor}(\mathcal Y)$, $f\in F$ and $\gamma_l,\gamma,\gamma_r\in\text{Mor}(\Gamma)$.\\

If $F,F':\mathcal X\to\mathcal Y$ are $\Gamma$-equivariant anafunctors, then a transformation $f:F\Rightarrow F'$ is called $\Gamma$-equivariant if the smooth map $f:F\to F'$ is $\text{Mor}(\Gamma)$-equivariant.
\end{definition}
Given a smooth manifold $M$ we denote by $M_{dis}$ the trivial groupoid where the objects are the points of $M$ and the only morphisms are identities.
\begin{definition}
  A \emph{Principal $\Gamma$-2-Bundle} over $M$ is a groupoid $\mathcal P$ together with a right action $R:\mathcal P\times\Gamma\to\mathcal P$ and a functor $\pi:\mathcal P\to M_{dis}$ that is a surjective submersion at the level of objects. The action must preserve the functor $\pi$, and the functor
\[
(\text{pr}_1,R):\mathcal P\times\Gamma\to \mathcal P\times_M \mathcal P
\]
must be a weak equivalence.\\
\end{definition}
Principal $\Gamma$-2-bundles over a fixed manifold $M$ can be given the structure of a 2-category and, as such, can be classified by non-abelian cohomology. We will focus on an aspect of this classification which is how a principal $\Gamma$-2-bundle can be constructed from local data given by cocycles.
\begin{definition}
  Let $\Gamma=\{H,G,\tau,\alpha\}$ be a Lie crossed module and $\{U_i\}_{i\in I}$ an open covering of a manifold $M$. A $\Gamma$-cocycle on the covering is comprised of the following data:
  \begin{description}
    \item[1)] For each pair $i,j\in I$ a smooth map $g_{ij}:U_i\cap U_j\to G$.
    \item[2)] for each triple $i,j,k\in I$ a smooth map $a_{ijk}:U_i\cap U_j\cap U_k\to H$.
  \end{description}
  The cocycle conditions for the maps are the following:
  \begin{align*}
    g_{ik}&=(\tau\circ a_{ijk})g_{ij}g_{jk},&&\text{on }U_i\cap U_j\cap U_k\\
    a_{ijl}a_{jkl}&=a_{ikl}\alpha(g_{kl},a_{ijk}),&&\text{on }U_i\cap U_j\cap U_k\cap U_l
  \end{align*}
  We denote a $\Gamma$-cocycle by $(g,a)$.
\end{definition}
\begin{lemma}\label{cocycle1}
  Given a $\Gamma$-cocycle defined on an open covering of $M$,  there is an associated principal $\Gamma$-2-bundle over $M$.
\end{lemma}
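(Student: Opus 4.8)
Replacing the open cover by the associated surjective submersion $p\colon Y:=\coprod_{i\in I}U_i\to M$, the cocycle data become smooth maps $g\colon Y^{[2]}\to G$ and $a\colon Y^{[3]}\to H$ on the iterated fibre products $Y^{[n]}:=Y\times_M\cdots\times_M Y$, subject to the two cocycle identities. The plan is to manufacture $\mathcal P$ by gluing the trivial local $2$-bundles $U_i\times\Gamma$ along these transition data, and then to verify, in turn, that the result is a Lie groupoid, that it carries a smooth right $\Gamma$-action over $M$, and that the associated functor $(\pr_1,R)$ is a weak equivalence.

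\textbf{Construction of the groupoid.} I would take $\mathrm{Obj}(\mathcal P):=Y\times G$, with $\pi$ the composite $Y\times G\to Y\xrightarrow{\,p\,}M$, and $\mathrm{Mor}(\mathcal P):=Y^{[2]}\times H\times G$, where a tuple $(y_1,y_2,h,g)$ is a morphism with source $(y_2,g)$ and target $\bigl(y_1,\tau(h)\,g(y_1,y_2)\,g\bigr)$; the composite of $(y_1,y_2,h,g)$ after $(y_2,y_3,h',g')$, defined whenever $g=\tau(h')\,g(y_2,y_3)\,g'$, is $\bigl(y_1,y_3,\;h\cdot\alpha(g(y_1,y_2),h')\cdot a(y_1,y_2,y_3)^{-1},\;g'\bigr)$. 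The first cocycle condition $g_{ik}=(\tau\circ a_{ijk})g_{ij}g_{jk}$ is precisely what makes the source and target of a composite equal the composite of the sources and targets, and, combined with the crossed-module relation $\tau(\alpha(g,h))=g\tau(h)g^{-1}$, also what lets one solve for the $H$-component above; the second cocycle condition $a_{ijl}a_{jkl}=a_{ikl}\,\alpha(g_{kl},a_{ijk})$, together with the two Peiffer identities, is what forces composition to be associative. Identity morphisms and inverses are built from the units of $H$ and $G$ and the same relations, and smoothness of all structure maps is automatic since everything is assembled from $g$, $a$ and the group operations of $G$ and $H$. One should also record at this point that the source and target maps are surjective submersions, so that $\mathcal P$ is a genuine Lie groupoid.

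\textbf{The $\Gamma$-action and the projection.} Next I would define $R\colon\mathcal P\times\Gamma\to\mathcal P$ by $(y,g)\cdot g_0:=(y,gg_0)$ on objects (for $g_0\in G=\mathrm{Obj}(\Gamma)$), and by letting a $2$-morphism $(h_0,g_0)\in H\rtimes G=\mathrm{Mor}(\Gamma)$ act on $(y_1,y_2,h,g)$ by right translation in the $G$-slot and by an $\alpha$-twisted multiplication in the $H$-slot, so that $R$ is a functor. The axioms $R(p,1)=p$, $R(\rho,\id_1)=\rho$, and the commuting square relating $R$ to the multiplication functor $m$ of $\Gamma$, reduce to associativity in $G$ and to $\alpha$ being an action; that the action preserves $\pi$ is immediate. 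Finally, $\pi\colon\mathcal P\to M_{dis}$ is a functor because every morphism lies over a single point of $M$, and it is a surjective submersion on objects because $p$ is one and $G$ is a manifold.

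\textbf{Principality.} The substantive point — and the step I expect to be the main obstacle — is to check that $(\pr_1,R)\colon\mathcal P\times\Gamma\to\mathcal P\times_M\mathcal P$ is a weak equivalence of Lie groupoids, i.e.\ smoothly essentially surjective and smoothly fully faithful. For essential surjectivity, given objects $(y_1,g_1)$ and $(y_2,g_2)$ over the same point of $M$, the pair $(y_1,y_2)$ lies in $Y^{[2]}$; choosing $g_0\in G$ appropriately and using a morphism of $\mathcal P$ that jumps from the sheet $y_1$ to the sheet $y_2$, one exhibits $\bigl((y_1,g_1),g_0\bigr)$ as mapping isomorphically to a pair with second component $(y_2,g_2)$, and this can be done smoothly in the base because $p\colon Y\to M$ admits local sections and $G$ acts transitively on itself. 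Full faithfulness says that for fixed objects the induced map on morphism spaces is a diffeomorphism; it unwinds to the statement that $H\rtimes G=\mathrm{Mor}(\Gamma)$ acts freely and transitively on the fibres of $\mathcal P\times_M\mathcal P$ in the appropriate sense, which is exactly where the crossed-module axioms ensure that the transition data determine the morphisms of $\mathcal P$ without ambiguity. Once these two properties are verified — and the relevant maps are checked to be submersions, so that the fibre products are smooth manifolds and the functors involved are smooth — $\mathcal P$ is by construction a principal $\Gamma$-$2$-bundle over $M$. Alternatively, one may simply invoke that this construction is inverse to the extraction of a \v{C}ech cocycle from a principal $2$-bundle, carried out by Waldorf in \cite{waldorf1}.
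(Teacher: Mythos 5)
Your construction is essentially the paper's own proof: the same object space $\coprod_i U_i\times G$, the same morphism space $\coprod_{i,j}(U_i\cap U_j)\times H\times G$ (your $Y^{[2]}\times H\times G$), the same right $\Gamma$-action by translation, and the same verification that $(\pr_1,R)$ is a weak equivalence, which the paper likewise leaves as a routine check. The only discrepancy is a choice of convention in the target and composition formulas (the paper places $g_{ij}^{-1}$ on the left of $\tau(h)$), which does not change the argument.
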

\begin{proof}
  Let $(g,a)$ be a $\Gamma$-cocycle defined on the open covering $\{U_i\}_{i\in I}$. We define a Lie groupoid $\mathcal P$ as follows
  \[
  \text{Obj}(\mathcal P):=\bigsqcup_{i\in I}U_i\times G,\quad \text{Mor}(\mathcal P):=\bigsqcup_{i,j\in I}(U_i\cap U_j)\times H\times G.
  \]
  Source and target maps are
  \[
  s(i,j,x,h,g):=(j,x,g),\quad t(i,j,x,h,g):=(i,x,g_{ij}(x)^{-1}\tau(h)g).
  \]
  The composition of morphisms is given by
  \[(i,j,x,h_2,g_2)\circ(j,k,x,h_1,g_1):=(i,k,x,a_{ijk}(x)\alpha(g_{jk}(x),h_2)h_1,g_1).\]
  The projection $\pi:\mathcal P\to M_{dis}$ is $\pi(i,x,g):=x$. The action is defined by
  \[
  R((i,x,g),g'):=(i,x,gg'),\quad R((i,j,x,h,g),(h',g')):=(i,j,x,h\alpha(g,h'),gg').
  \]
  It is straight forward to check that $(\text{pr}_1,R):\mathcal P\times\Gamma\to \mathcal P\times_M \mathcal P$ is a weak equivalence.
\end{proof}
As in the classical setting, a connection on a principal $\Gamma$-2-bundle is defined in terms of a differential form. If $\Gamma$ is a Lie 2-group with differential Lie crossed module $\gamma$, then for any Lie groupoid $\mathcal P$ there is a differential graded Lie algebra $\Omega(\mathcal{P},\gamma)$. The details of the definition and properties of $\Omega(\mathcal{P},\gamma)$ can be found in \cite{waldorf1}. We will simply state that a differential form $\psi\in\Omega^k(\mathcal{P},\gamma)$ has three components $\psi=(\psi^a,\psi^b,\psi^c)$ with
\[
\psi^a\in\Omega^k(\text{Obj}(\mathcal P),\mathfrak g),\quad \psi^a\in\Omega^k(\text{Mor}(\mathcal P),\mathfrak h),\quad \psi^a\in\Omega^{k+1}(\text{Obj}(\mathcal P),\mathfrak h).
\]

\begin{example}
  Given a Lie 2-group $\Gamma=(H,G,\tau,\alpha)$ with differential Lie crossed module $\gamma=(\mathfrak{h}, \mathfrak{g}, t,a)$, there is a canonical Maurer-Cartan form $\Theta\in\Omega^1(\Gamma,\gamma)$. The form has two non-trivial components $\Theta^a\in\Omega^1(G,\mathfrak g)$ and $\Theta^b\in\Omega^1(H\times G,\mathfrak h)$ given by
  \[
  \Theta^a:=\theta^G\quad\text{and}\quad\Theta^b:=(\alpha_{\text{pr}^{-1}_G})_*(\text{pr}_H^*\theta^H),
  \]
  where $\theta^G$ and $\theta^H$ are the Maurer-Cartan forms of $G$ and $H$ respectively. The form $\Theta$ satisfies the Maurer-Cartan equation.
\end{example}
\begin{definition}
  Let $F:\mathcal P\to\Gamma$ be a smooth functor. Write $F_0:\text{Obj}(\mathcal P)\to G$ for the map at the level of objects, $F_G:\text{Mor}(\mathcal P)\to G$ and $F_H:\text{Mor}(\mathcal P)\to H$ for the projections onto $G$ and $H$ respectively of the map at the level of morphisms. There is a linear map $\text{Ad}_F:\Omega^k(\mathcal P,\gamma)\to \Omega^k(\mathcal P,\gamma)$ called the adjoint action defined as follows:
  \begin{align*}
    \text{Ad}_F(\psi)^a & :=\text{Ad}_{F_0}(\psi^a) \\
    \text{Ad}_F(\psi)^b & :=\text{Ad}_{F_H}((\alpha_{F_G})_*(\psi^b)+ (\tilde\alpha_{F_H^{-1}})_*(\text{Ad}_{F_G}(s^*\psi^a))\\
    \text{Ad}_F(\psi)^c & :=(\alpha_{F_0})_*(\psi^c)
  \end{align*}
\end{definition}
\begin{definition}
If $\mathcal{P}$ is a principal $\Gamma$-2-bundle, then a connection on $\mathcal{P}$ is a $1$-form $\Omega\in\Omega^1(\mathcal{P},\gamma)$ such that the following equation holds over $\mathcal{P}\times \Gamma$:

\[R^*\Omega=\text{Ad}^{-1}_{\text{pr}_\Gamma}(\text{pr}^*_{\mathcal P}\Omega)+\text{pr}^*_\Gamma\Theta.\]

 Unraveling this definition we find that a connection is a triple of ordinary differential forms $\Omega=(\Omega^a,\Omega^b,\Omega^c)$ with $\Omega^a\in\Omega^1(\mathcal P_0,\mathfrak g)$, $\Omega^b\in\Omega^1(\mathcal P_1,\mathfrak h)$ and $\Omega^c\in\Omega^2(\mathcal P_0,\mathfrak h)$ such that
$$\begin{array}{lc}
R^{*}\Omega^a  =  \text{Ad}_g^{-1}(p^{*}\Omega^a)+g^{*}\Theta & \text{over} \ \text{Obj}(\mathcal P) \times \text{Obj}(\Gamma) \\
R^{*}\Omega^b  =  (\alpha_{g^{-1}})_{*}(\text{Ad}_h^{-1}(p^{*}\Omega^b)+(\tilde{\alpha}_h)_{*}(p^{*}s^{*}\Omega^a)+h^{*}\Theta) &  \text{over} \ \text{Mor}(\mathcal P) \times \text{Mor}(\Gamma)\\
R^{*}\Omega^c  =  (\alpha_{g^{-1}})_{*}(p^{*}\Omega^c) & \text{over} \  \text{Obj}(\mathcal P) \times \text{Obj}(\Gamma).
\end{array}$$
Here $p, g$ and $h$ are the projections to either $\text{Obj}(\mathcal P)$ or $\text{Mor}(\mathcal P)$, $G$ and $H$, respectively. \\

The curvature of a connection is $\text{curv}(\Omega)=D\Omega+\frac{1}{2}[\Omega\wedge\Omega]\in\Omega^2(\mathcal P,\gamma)$. The connection is called flat if $\text{curv}(\Omega)=0$.
\end{definition}

Principal $\Gamma$-2-bundles over $M$ with flat connection fit into a 2-category which we denote $2\text{-}\mathcal{B}un_\Gamma^{f}(M)$. The details of the bicategory structure can be found in \cite{waldorf1}. Furthermore, if we fix a principal $\Gamma$-2-bundle $\mathcal P$, the category of flat connections defined over $\mathcal P$ will be denoted $2\text{-}\mathcal{B}un_{\mathcal P}^f(M)$. The classification of principal $\Gamma$-2-bundles over $M$ by non-abelian cohomology may be refined to classify bundles with flat connection. First we define a differential $\Gamma$-cocycle, which is a $\Gamma$-cocycle with the local data required to build a connection.
\begin{definition}
  Let $\Gamma=(H,G,\tau,\alpha)$ be a Lie crossed module and $\{U_i\}_{i\in I}$ an open covering of a manifold $M$. A differential $\Gamma$-cocycle on the covering is comprised of the following data:
  \begin{description}
    \item[1)] A $\Gamma$-cocycle $(g,a)$.
    \item[2)] On every open set $U_i$ a couple of forms $(A_i,B_i)$ where $A_i\in\Omega^1(U_i,\mathfrak g)$ and $B_i\in\Omega^2(U_i,\mathfrak h)$.
    \item[3)] For each pair $i,j\in I$, a form $\varphi_{ij}\in\Omega^1(U_i\cap U_j,\mathfrak h)$.
  \end{description}
  The forms must satisfy the following conditions:
  \begin{align}
    A_j+\tau_*(\varphi_{ij}) & =\text{Ad}_{g_{ij}}(A_i)-g_{ij}^*\bar\theta &&\text{on }U_i\cap U_j\label{diffcocycle1}\\
    (\alpha_{g_{ij}})_*(B_i) & =B_j+d\varphi_{ij}+\frac{1}{2}[\varphi_{ij}\wedge \varphi_{ij}]+\alpha_*(A_j\wedge\varphi_{ij})&&\text{on }U_i\cap U_j\label{diffcocycle2}\\
    \varphi_{jk}+(\alpha_{g_{jk}})_*(\varphi_{ij})-a_{ijk}^*\theta&=\text{Ad}^{-1}_{a_{ijk}}(\varphi_{ik})+(\tilde\alpha_{a_{ijk}})_*(A_k)&&\text{on }U_i\cap U_j\cap U_k\label{diffcocycle3}
  \end{align}
  A pair $(A_i,B_i)$ satisfying the conditions stated above is called a $\Gamma$-connection. We denote differential $\Gamma$-cocycles by $(g,a,A,B,\varphi)$
  A differential $\Gamma$-cocycle has two curvatures associated to it. The ``3-curvature" is
  \begin{equation}\label{3curv}
  \text{curv}(A_i,B_i):=dB_i+\alpha_*(A_i)( B_i)\in\Omega^3(U_i,\mathfrak h).
  \end{equation}
  The ``fake curvature" is
  \begin{equation}\label{fcurv}
  \text{fcurv}(A_i,B_i):=dA_i+\frac{1}{2}[A_i, A_i]-\tau_*(B_i)\in\Omega^2(U_i,\mathfrak g).
  \end{equation}
  A $\Gamma$-cocycle is called flat if both curvatures vanish.
\end{definition}
A flat differential $\Gamma$-cocycle encodes the necessary local data to construct a principal $\Gamma$-2-bundle with flat connection, that is the content of the following lemma:
\begin{lemma}\label{cocycle2}
  Given a differential $\Gamma$-cocycle defined on an open covering of $M$, there is an associated principal $\Gamma$-2-bundle with a flat connection over $M$.
\end{lemma}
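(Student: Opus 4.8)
The plan is to keep the principal $\Gamma$-2-bundle $\mathcal P$ built in Lemma~\ref{cocycle1} and to promote the extra local data $(A_i,B_i,\varphi_{ij})$ to a flat connection $\Omega=(\Omega^a,\Omega^b,\Omega^c)\in\Omega^1(\mathcal P,\gamma)$ on it. Recall that $\mathrm{Obj}(\mathcal P)=\bigsqcup_i U_i\times G$ and $\mathrm{Mor}(\mathcal P)=\bigsqcup_{i,j}(U_i\cap U_j)\times H\times G$, so it suffices to prescribe, on each chart and writing $g$ (resp. $h$) for the $G$- (resp. $H$-) coordinate and $\theta^G,\theta^H$ for the left Maurer--Cartan forms,
\[
\Omega^a|_{U_i\times G}:=\mathrm{Ad}_{g^{-1}}(\mathrm{pr}_{U_i}^*A_i)+\mathrm{pr}_G^*\theta^G,\qquad
\Omega^c|_{U_i\times G}:=(\alpha_{g^{-1}})_*(\mathrm{pr}_{U_i}^*B_i),
\]
together with an $\mathfrak h$-valued $1$-form $\Omega^b$ on each $(U_i\cap U_j)\times H\times G$ assembled from $\mathrm{pr}^*\varphi_{ij}$, the source pullback $s^*\Omega^a$, the twists $\alpha_{g^{-1}}$, $\mathrm{Ad}_{h^{-1}}$ and $\tilde\alpha_{h^{-1}}$ dictated by the adjoint action $\mathrm{Ad}_F$, and $\mathrm{pr}_H^*\theta^H$ --- i.e. the $1$-form forced on $\Omega^b$ by the shape of the equivariance axiom and Waldorf's description of $\Omega(\mathcal P,\gamma)$ in \cite{waldorf1}. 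These are the evident globalizations, over the charts of $\mathcal P$, of the usual ``trivial bundle with prescribed connection potential'' construction.

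Next I would run two verifications that $\Omega$ is indeed a connection. First, that $\Omega$ is a legitimate element of $\Omega^1(\mathcal P,\gamma)$: on objects there is nothing to check, as the charts are disjoint, and the content is the compatibility of $\Omega^b$ with the source, target and composition of $\mathcal P$, whose target map and composition involve $g_{ij}$ and $a_{ijk}$. This is exactly where the gluing relations \eqref{diffcocycle1} (matching $A_i$ and $A_j$ across $U_i\cap U_j$) and \eqref{diffcocycle3} (the triple-overlap relation for $a_{ijk}$) are used, in conjunction with the $\Gamma$-cocycle identities. Second, the equivariance condition $R^*\Omega=\mathrm{Ad}^{-1}_{\mathrm{pr}_\Gamma}(\mathrm{pr}_{\mathcal P}^*\Omega)+\mathrm{pr}_\Gamma^*\Theta$: since the $\Gamma$-action on $\mathcal P$ only right-translates the $G$- and $H$-coordinates, this may be checked one chart at a time and reduces to the multiplicativity identity $(g_1g_2)^*\theta^G=\mathrm{Ad}_{g_2^{-1}}(g_1^*\theta^G)+g_2^*\theta^G$ (and its $H$-analogue) together with the crossed-module relations $\tau\circ\alpha_g=\mathrm{Ad}_g\circ\tau$, $\alpha_{\tau(h)}=\mathrm{Ad}_h$ and their infinitesimal versions; no overlap data is needed here.

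Then comes flatness, $\mathrm{curv}(\Omega)=D\Omega+\tfrac12[\Omega\wedge\Omega]=0$, which I would expand into its three components. On $U_i\times G$ the Maurer--Cartan equation for $\theta^G$ and the identity $\tau_*\circ(\alpha_{g^{-1}})_*=\mathrm{Ad}_{g^{-1}}\circ\tau_*$ collapse the $\mathfrak g$-component to $\mathrm{Ad}_{g^{-1}}\big(dA_i+\tfrac12[A_i,A_i]-\tau_*B_i\big)=\mathrm{Ad}_{g^{-1}}(\mathrm{fcurv}(A_i,B_i))$, the twisted fake curvature \eqref{fcurv}, while the $\mathfrak h$-component on objects collapses to $(\alpha_{g^{-1}})_*\big(dB_i+\alpha_*(A_i)(B_i)\big)$, the twisted $3$-curvature \eqref{3curv}; both vanish since the cocycle is flat. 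The $\mathfrak h$-component on morphisms is treated by the same manipulations after substituting the formula for $\Omega^b$ and invoking \eqref{diffcocycle2} to rewrite $d\varphi_{ij}+\tfrac12[\varphi_{ij}\wedge\varphi_{ij}]+\alpha_*(A_j\wedge\varphi_{ij})$ as $(\alpha_{g_{ij}})_*B_i-B_j$; it again reorganizes into a combination of the fake and $3$-curvatures and vanishes. Hence $(\mathcal P,\Omega)$ is a principal $\Gamma$-2-bundle with flat connection.

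The step I expect to be the real obstacle is everything concerning $\Omega^b$: fixing its local formula in agreement with Waldorf's conventions for $\Omega(\mathcal P,\gamma)$ --- the precise ordering of the $\mathrm{Ad}$-, $\alpha$- and $\tilde\alpha$-twists and the signs --- and then pushing that $1$-form through both the equivariance identity and the curvature computation. This is the stage where all three differential cocycle relations \eqref{diffcocycle1}--\eqref{diffcocycle3} get consumed and where the bookkeeping is heaviest; by contrast the parts of the argument involving only $\Omega^a$ and $\Omega^c$ are routine, one-chart-at-a-time calculations.
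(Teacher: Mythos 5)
Your proposal follows essentially the same route as the paper: keep the bundle of Lemma \ref{cocycle1} and define $\Omega=(\Omega^a,\Omega^b,\Omega^c)$ chart by chart from $(A_i,B_i,\varphi_{ij})$; the paper's $\Omega^b$ is exactly $(\alpha_{\mathrm{pr}_G^{-1}})_*\bigl(\mathrm{Ad}^{-1}_{\mathrm{pr}_H}(\varphi_{ij})+(\tilde\alpha_{\mathrm{pr}_H})_*(A_j)+\mathrm{pr}_H^*\theta\bigr)$, built from precisely the ingredients you list, and the paper then simply asserts the verification you outline. The only discrepancy is the sign of $\Omega^c$, which in the paper is $-(\alpha_{\mathrm{pr}_G^{-1}})_*(B_i)$ rather than $+(\alpha_{g^{-1}})_*(B_i)$; with the curvature conventions \eqref{3curv}--\eqref{fcurv} fixed, that sign is what makes the flatness computation close.
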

\begin{proof}
  Let $\mathcal P$ be the bundle obtained from lemma \ref{cocycle1}. We define a connection on $\mathcal P$ as follows:
  \begin{align*}
    \Omega^a|_{U_i\times G} & := \text{Ad}_{\text{pr}_G}^{-1}(A_i)+\text{pr}_G^*\theta\\
    \Omega^b|_{(U_i\cap U_j)\times H\times G} & :=(\alpha_{\text{pr}_G^{-1}})_*(\text{Ad}^{-1}_{\text{pr}_H}(\varphi_{ij})+(\tilde\alpha_{\text{pr}_H})_*(A_j)+\text{pr}_H^*\theta)\\
    \Omega^c|_{U_i\times G}&:=-(\alpha_{\text{pr}_G^{-1}})_*(B_i).
  \end{align*}
  The forms defined above meet the criteria required of a flat connection on $\mathcal P$.
\end{proof}
A flat connection on a principal 2-bundle over $M$ can be used to define holonomies along paths and surfaces on $M$. The holonomies and their properties are better understood as a representation of the fundamental 2-groupoid of $M$.  Let us fix our conventions regarding simplices.
\begin{definition}
  The simplex of dimension $k$ for $k>0$ is the set
  \[
  \Delta_k=\{(t_1,\cdots,t_k)\in\RR^k\;\big|\; 1\geq t_1\geq\cdots\geq t_k\geq0\}.
  \]
  The zero-simplex $\Delta_0$ is the single-point space.
\end{definition}
The $k$-simplex $\Delta_k$ has $k+1$ vertices which we denote $v_0,\cdots, v_k$. It is straight forward to check that $v_i=(1,\cdots,1,0,\cdots,0)$ where the number of ones is equal to $i$. The edges of the simplex are the line segments connecting two different vertices, they will be denoted $[v_i,v_j]$ for $i\neq j$.
\begin{definition}
  The fundamental 2-groupoid of $M$, denoted $\pi_{\leq2}(M)$, is a bicategory comprised of the sets $X_k=\text{Hom}_{\text{PS}}(\Delta_k,M)$ for $k\leq2$ where $\text{PS}$ stands for piecewise smooth. The bicategory structure is as follows:
  \begin{itemize}
    \item The objects of $\pi_{\leq2}(M)$ are the points of $M$, so $\text{Obj}(\pi_{\leq2}(M))=\text{Hom}_{\text{PS}}(\Delta_0,M)$.
    \item A 1-morphism between the points $x,y\in M$ is a path connecting $x$ to $y$, therefore $1-\text{Mor}(\pi_{\leq2}(M))=\text{Hom}_{\text{PS}}(\Delta_1,M)$. The composition of morphisms is the usual composition of paths.
    \item The 2-morphisms of $\pi_{\leq2}(M)$ are fixed ends homotopies between paths modulo higher homotopies. A map $\sigma:\Delta_2\to M$ is a fixed ends homotopy between the paths $\sigma([v_0,v_2])$ and $\sigma([v_0,v_1]\ast[v_1,v_2])$, thus $2-\text{Mor}(\pi_{\leq2}(M))=\text{Hom}_{\text{PS}}(\Delta_2,M)$ with the usual vertical and horizontal compositions of homotopies.
  \end{itemize}
\end{definition}
\begin{remark}\label{Theta2}
  To elaborate on how a mapping $\sigma:\Delta_2\to M$ may be considered a homotopy, consider the maps $q:I^2\to\Delta_2$ defined by $q(t,s)=(\text{max}\{t,s\},s)$, and $\lambda:I^2\to I^2$ determined by the property that $\lambda(-,s)$ is the piecewise linear path running through the points $(s,1)\to(s,0)\to(0,0)$ and arrives at $(s,0)$ at time $t=1/2$. Then we denote the composition $q\circ\lambda$ by $\Theta_2$. The map $\Theta_2$ sends horizontal segments on $I^2$ into piecewise linear paths on $\Delta_2$.\\

\begin{figure}[h!]\label{theta2}
  \begin{center}
  \includegraphics[width=\textwidth]{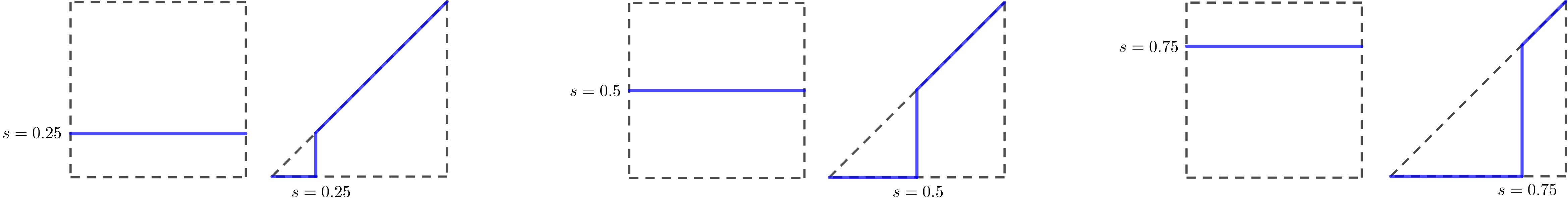}
  \end{center}
\end{figure}
Therefore, for a 2-simplex $\sigma:\Delta_2\to M$, the composition $\sigma\circ\Theta_2:I^2\to M$ is a homotopy with fixed ends between the paths $\sigma([v_0,v_2])$ and $\sigma([v_0,v_1]\ast[v_1,v_2])$.
\end{remark}
\begin{definition}
  Let $\Gamma$ be a Lie 2-group and $\mathcal P$ a principal $\Gamma$-2-bundle over $M$. The $\Gamma$-torsor category of $\mathcal P$ is the 2-category $\Gamma\text{-Tor}(\mathcal P)$ such that
  \begin{itemize}
    \item The objects of $\Gamma\text{-Tor}(\mathcal P)$ are the fibers $\mathcal P_x$ for $x\in M$. These are called $\Gamma$-torsors.
    \item The 1-morphisms are $\Gamma$-equivariant anafunctors $\mathcal P_x\Rightarrow\mathcal P_y$.
    \item The 2-morphisms are $\Gamma$-equivariant transformations between anafunctors.
  \end{itemize}
  A 2-group representation of $\pi_{\leq2}(M)$ on $\mathcal P$ is a 2-functor $\mathcal S:\pi_{\leq2}(M)\to\Gamma\text{-Tor}(\mathcal P)$. The representations  of $\pi_{\leq2}(M)$ on $\mathcal P$ will be denoted $\text{Rep}_{\leq2}(M,\mathcal P)$.
\end{definition}
We will now summarize the construction, which appeared in \cite{waldorf2}, that assigns a representation of the fundamental 2-groupoid to a flat connection on a principal 2-bundle over $M$. We begin by defining the 2-group representation locally.\\

From lemma \ref{cocycle2} we know that a principal $\Gamma$-2-bundle with flat connection is locally determined by a differential $\Gamma$-cocycle. So let us consider a 2 group $\Gamma=(H,G,\tau,\alpha)$ with differential Lie crossed module $\gamma=(\mathfrak h,\mathfrak g,\tau_*,\alpha_*)$, and the principal $\Gamma$-2-bundle $\mathcal P=M_{dis}\times \Gamma$. Let $(A,B)$ be a flat $\Gamma$-connection over $M$, this is $A\in\Omega^1(M,\mathfrak g)$, $B\in\Omega^2(M,\mathfrak h)$ are forms such that the curvatures (\ref{3curv}) and (\ref{fcurv}) vanish. A representation of $\pi_{\leq2}(M)$ in $\Gamma$ is defined as follows
\begin{itemize}
	\item Let $\gamma:\Delta_1\to M$ be a piecewise smooth path. Let $g_\gamma:\Delta_1\to G$ be the path that satisfies the initial condition $g_{\gamma}(0)=\text{id}_G$ and the differential equation
\begin{equation}\label{poe}
\dfrac{dg_{\gamma}(t)}{dt}=-\left(R_{g_{\gamma}(t)}\right)_{*} A\left(\dfrac{d\gamma}{dt}\right)
\end{equation}
where $(R_g)_{*}$ is the differential at $\text{id}_G\in G$ of the map given by right multiplication with $g$.\\
We define the smooth functor $\text{Hol}_\gamma:\mathcal P_{\gamma(0)}\Rightarrow\mathcal P_{\gamma(1)}$ at the level of objects by $\text{Hol}_\gamma(\gamma(0),g)=(\gamma(1),g_{\gamma}(1)g)$, which is clearly a $G$-equivariant map. If $(\text{id}_{\gamma(0)},h,g):(\gamma(0),g)\to (\gamma(0),\tau(h)g)$ is a morphism, we define $\text{Hol}_\gamma(\text{id}_{\gamma(0)},h,g)=(\gamma(1),\alpha(g_{\gamma}(1),h),g_\gamma(1)g).$
This is indeed a morphism $g_{\gamma}(1)g\to g_{\gamma}(1)\tau(h)g$ since
\[
\tau(\alpha(g_{\gamma}(1),h))g_\gamma(1)g=g_{\gamma}(1)\tau(h)g_{\gamma}(1)^{-1}g_{\gamma}(1)g=g_{\gamma}(1)\tau(h)g.
\]
Thus $\text{Hol}_\gamma$ is a $\Gamma$-equivariant functor.
	\item Let $\sigma:\Delta_2\to M$ be a 2-simplex and $\Sigma:=\sigma\circ\Theta_2:I^2\to M$. Denote by $\gamma_0$ and $\gamma_1$ the paths corresponding to $\sigma([v_0,v_1]\ast[v_1,v_2])$ and $\sigma([v_0,v_2])$ respectively. We write $\Sigma_s(t)=\Sigma(s,t)$, note that $\Sigma_0=\gamma_0$ and $\Sigma_1=\gamma_1$. The holonomy $\text{Hol}_{\sigma}$ is a $\Gamma$-equivariant transformation $\text{Hol}_{\sigma}:\text{Hol}_{\gamma_0}\Rightarrow\text{Hol}_{\gamma_1}$ defined as follows: let $h_\sigma:[0,1]\to H$ be the path that satisfies the initial condition $h_\sigma(0)=\text{id}_H$ and the differential equation
\begin{equation}\label{soe}
\dfrac{dh(s)}{ds}=\left(L_{h(s)}\right)_{*}\left(\int_0 ^1 \alpha\left(g_{\Sigma_s(t)^{-1}}\right)_{*}\left(B\left(\dfrac{\partial}{\partial t} \Sigma_s(t), \dfrac{\partial}{\partial s} \Sigma_s(t) \right)\right)dt\right),
\end{equation}
where $\left(L_{h}\right)_{*}$ is the differential at $\text{id}_H\in H$ of the map given by left multiplication with $h$. The transformation is defined by $\text{Hol}_\sigma(\sigma(v_0),g)=(\sigma(v_2),h_\sigma(1),g_{\gamma_0}g):(\sigma(v_2),g_{\gamma_0}g)\to(\sigma(v_2),g_{\gamma_1}g)$.
\end{itemize}
 For the global case, we consider a principal $\Gamma$-$2$-bundle $\mathcal{P}$ over $M$ with flat connection $\Omega$. For each path $\gamma:[0,1]\to M$ and each 2-simplex $\sigma:\gamma\Rightarrow \gamma'$ in $M$, we define an anafunctor $\text{Hol}_{\gamma}:\mathcal{P}_{\gamma(0)}\to \mathcal{P}_{\gamma(1)}$ and a $\Gamma$-equivariant transformation $\text{Hol}_{\sigma}:\text{Hol}_{\gamma}\Rightarrow \text{Hol}_{\gamma'}$ respectively.\\

For paths $\gamma:[0,1]\to M$ the idea is to define the set $\text{Hol}_{\gamma}(\lambda)$ with respect to a fixed subdivision $\lambda$ of $[0,1]$, then a smooth manifold $\text{Hol}_{\gamma}$ is constructed by taking a direct limit over the possible subdivisions $\lambda$.

\begin{definition}
Let $\pi:\mathcal{P}\to M_{\text{dis}}$ be a principal $\Gamma$-bundle with a connection $\Omega=(\Omega^a,\Omega^b,\Omega^c)$.
\begin{itemize}
	\item A path $\beta:[a,b]\to \text{Obj}(\mathcal{P})$ is \emph{horizontal}, if $\Omega^a({\beta'}(t))=0$ for all $t\in [a,b]$.
	\item A path $\rho:[a,b]\to \text{Mor}(\mathcal{P})$ is \emph{horizontal}, if $\Omega^b({\rho'}(t))=0$ for all $t\in [a,b]$.
\end{itemize}
\end{definition}

\begin{definition}
For $0<n\in \mathbb{N}$, let $T_n:=\{(t_i)_{i=0}^n \ | \ 0=t_0< t_1 < \cdots <t_n =1\}$ be the set of possible $n$-fold subdivisions of the interval $[0,1]$. For $\lambda\in T_n$ we define the set
\begin{align*}
\text{Hol}_{\gamma}(\lambda):=\{(\{\rho_i\}_{i=0}^n, \{\gamma_i\}_{i=1}^n) \;\big|\; &\rho_i\in \text{Mor}(\mathcal P),\; \gamma_i:[t_{i-1},t_i]\to \text{Obj}(\mathcal P)\; \text{are horizontal paths,}\\
  &\pi\circ\gamma_i=\gamma|_{[t_{i-1}, t_i]}, \;  t(\rho_i)=\gamma_{i+1}(t_i) \; \text{and} \; s(\rho_i)=\gamma_i(t_i)\}/ \sim.
\end{align*}
\end{definition}
We think about the elements of $\text{Hol}_{\gamma}(\lambda)$ as formal compositions of paths in $\text{Obj}(\mathcal P)$ and morphisms in $\text{Mor}(\mathcal P)$, using the notation $\xi=\rho_0\ast \gamma_1\ast \rho_1 \ast\cdots\ast \gamma_n\ast \rho_n$ for a representative $\xi$ of an element in $\text{Hol}_{\gamma}(\lambda)$. The relation $\sim$ is the relation generated by $\{\sim_j\}_{1\leq j\leq n}$ where
\[
\rho_0\ast \gamma_1\ast \cdots\ast \gamma_n\ast \rho_n\sim_j \rho_0'\ast \gamma_1' \ast\cdots\ast \gamma_n'\ast \rho_n'
\]
if there exist a horizontal path $\tilde{\rho}:[t_{j-1},t_j]\to \text{Mor}(\mathcal P)$ such that:
\begin{itemize}
	\item $\gamma_j=s(\tilde{\rho})$, $\gamma'_j=t(\tilde{\rho})$ and $\gamma'_i=\gamma_i$ for all $1\leq i \leq n$, $i\neq j$,
	\item $\rho'_{j-1}=\tilde{\rho}(t_{j-1})\circ \rho_{j-1}$, $\rho'_{j}=\rho_{j}\circ \tilde{\rho}(t_{j})^{-1}$ and $\rho'_i=\rho_i$ for all $0\leq i\leq n, i\neq j, j-1$.
\end{itemize}

The anchor maps are given by:
\[\alpha_l:\text{Hol}_{\alpha}(\lambda)\to \mathcal{P}_{\gamma(0)} \ : \ \rho_0\ast \gamma_1\ast \cdots\ast \gamma_n\ast \rho_n\mapsto s(\rho_0),\]
\[\alpha_r:\text{Hol}_{\alpha}(\lambda)\to \mathcal{P}_{\gamma(1)} \ : \ \rho_0\ast \gamma_1\ast \cdots\ast \gamma_n\ast \rho_n\mapsto t(\rho_n).\]

The left $\mathcal{P}_{\gamma(0)}$-action $\text{Mor}(\mathcal{P}_{\gamma(0)})_{ \ s}\times_{\alpha_{l}}\text{Hol}_{\gamma}(\lambda)\to \text{Hol}_{\gamma}(\lambda)$ and right $\mathcal{P}_{\gamma(1)}$-action $\text{Hol}_{\gamma}(\lambda)_{ \ \alpha_{r}}\times_{t} \ \text{Mor}(\mathcal{P}_{\gamma(1)})\to \text{Hol}_{\gamma}(\lambda)$ on the set $\text{Hol}_{\gamma}(\lambda)$ are defined by
\[
\rho\circ(\rho_0\ast \gamma_1\ast \cdots\ast \gamma_n\ast \rho_n):=(\rho_0\circ\rho^{-1})\ast \gamma_1\ast \cdots\ast \gamma_n\ast \rho_n,
\]
\[
(\rho_0\ast \gamma_1\ast \cdots\ast \gamma_n\ast \rho_n)\circ\rho:=\rho_0\ast \gamma_1\ast \cdots\ast \gamma_n\ast (\rho^{-1}\circ \rho_n).
\]

The $\text{Mor}(\Gamma)$-action $ \text{Hol}_{\gamma}(\lambda)\times \text{Mor}(\Gamma)\to  \text{Hol}_{\gamma}(\lambda)$ is given by
\[
(\rho_0\ast \gamma_1\ast \cdots\ast \gamma_n\ast \rho_n)\cdot(h,g):= R(\rho_0,(h^{-1},\tau(h)g))\ast R(\gamma_1,g)\ast\cdots\ast R(\gamma_n,g)\ast R(\rho_n,g).
\]

Finally, to define the smooth manifold $\text{Hol}_{\gamma}$ consider the set $T:=\bigsqcup_{n\in \mathbb{N}} T_n$ which is directed by inclusion, i.e. $\lambda\leq \lambda'$ if $\lambda\subset \lambda'$. If $\lambda\leq \lambda'$ then we have a map $f_{\lambda,\lambda'}:\text{Hol}_{\gamma}(\lambda)\to \text{Hol}_{\gamma}(\lambda')$ defined by adding identities $\rho_i=\text{id}$ and splitting $\gamma_i$ in two parts, at all points of $\lambda'$ that are not in $\lambda$. The anafunctor $\text{Hol}_{\gamma}$ is the direct limit of the direct system of sets $\{\text{Hol}_{\gamma}(\lambda)\}_{\lambda\in T}$. The anchor maps and the actions defined on each $\text{Hol}_{\gamma}(\lambda)$ descend to $\text{Hol}_{\gamma}$.

\begin{proposition}
The smooth manifold $\text{Hol}_{\gamma}$ together with the anchor maps $\alpha_l$ and $\alpha_r$, the actions $\text{Mor}(\mathcal{P}_{\gamma(0)})_{ \ s}\times_{\alpha_{l}}\text{Hol}_{\gamma}(\lambda)\to \text{Hol}_{\gamma}(\lambda)$, $\text{Hol}_{\gamma}(\lambda)_{ \ \alpha_{r}}\times_{t} \ \text{Mor}(\mathcal{P}_{\gamma(1)})\to \text{Hol}_{\gamma}(\lambda)$ and $ \text{Hol}_{\gamma}(\lambda)\times \text{Mor}(\Gamma)\to  \text{Hol}_{\gamma}(\lambda)$, define a $\Gamma$-equivariant anafuntor $\text{Hol}_{\gamma}:\mathcal{P}_{\gamma(0)} \to \mathcal{P}_{\gamma(1)}$.
\end{proposition}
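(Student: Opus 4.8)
The plan is to verify, piece by piece, the axioms of a $\Gamma$-equivariant anafunctor as listed in the definitions above, working first at the level of a fixed subdivision $\lambda \in T_n$ and then passing to the direct limit. I would begin by checking that $\mathrm{Hol}_\gamma(\lambda)$, equipped with the anchor $\alpha_l$ and the right $\mathcal{P}_{\gamma(1)}$-action, is a principal $\mathcal{P}_{\gamma(1)}$-bundle over $\mathcal{P}_{\gamma(0)}$. The key input here is the existence and uniqueness of horizontal lifts: given the base path $\gamma|_{[t_{i-1},t_i]}$ in $M_{\mathrm{dis}}$ and a starting point in $\mathrm{Obj}(\mathcal{P})$, the ODE $\Omega^a(\gamma_i'(t)) = 0$ has a unique solution, because $\pi$ is a surjective submersion at the level of objects and $\Omega^a$ satisfies the equivariance identity $R^*\Omega^a = \mathrm{Ad}_g^{-1}(p^*\Omega^a) + g^*\Theta$. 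This identity is exactly what makes the space of horizontal lifts a $G$-torsor over each point of $M$, hence makes the fibration condition $\mathrm{Hol}_\gamma(\lambda)\,{}_{\alpha_r}\!\times_t \mathrm{Mor}(\mathcal{P}_{\gamma(1)}) \xrightarrow{\sim} \mathrm{Hol}_\gamma(\lambda) \times_{\mathcal{P}_{\gamma(0)}} \mathrm{Hol}_\gamma(\lambda)$ a diffeomorphism. One also has to confirm that the relation $\sim$ generated by the $\sim_j$ is compatible with all the structure, so that everything descends to the quotient; this is a routine but somewhat lengthy check that the horizontal reparametrization moves $\tilde\rho$ leaves anchors and actions invariant.

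Next I would verify that the left $\mathcal{P}_{\gamma(0)}$-action and the right $\mathcal{P}_{\gamma(1)}$-action commute with each other: this is immediate from the formulas, since the left action only modifies $\rho_0$ (by $\rho_0 \circ \rho^{-1}$) and the right action only modifies $\rho_n$ (by $\rho^{-1}\circ\rho_n$), and these are operations at opposite ends of the formal composition. The anchor-preservation diagrams for the $\Gamma$-equivariant structure then need to be checked: that $\rho$ acting by $(h,g) \in \mathrm{Mor}(\Gamma)$ intertwines $\alpha_l$ with $R(-,g)$ on $\mathcal{P}_{\gamma(0)}$ and $\alpha_r$ with $R(-,g)$ on $\mathcal{P}_{\gamma(1)}$. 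Here one uses the explicit formula for the $\mathrm{Mor}(\Gamma)$-action, namely $(\rho_0\ast\cdots\ast\rho_n)\cdot(h,g) = R(\rho_0,(h^{-1},\tau(h)g))\ast R(\gamma_1,g)\ast\cdots\ast R(\rho_n,g)$, together with the source/target formulas $s(h,g)=g$, $t(h,g)=\tau(h)g$ for the $2$-group $\Gamma$; the twisting by $(h^{-1},\tau(h)g)$ on the first factor is precisely engineered so that $s(\rho_0)$ transforms by the naive right $G$-action $g$ while keeping the glued composition well defined. The compatibility identity $\rho(\chi\circ f\circ \eta,\gamma_l\circ\gamma\circ\gamma_r) = R(\chi,\gamma_l)\circ\rho(f,\gamma)\circ R(\eta,\gamma_r)$ is then a direct computation using the crossed-module relations, in particular $\tau(\alpha(g,h))=g\tau(h)g^{-1}$, to move the $\Gamma$-actions past the groupoid composition $\circ$ in $\mathcal{P}$.

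Finally I would check that all of the above is compatible with the transition maps $f_{\lambda,\lambda'}$, which insert identity morphisms and split horizontal paths at the new subdivision points. Since the connecting maps only add trivial pieces $\rho_i = \mathrm{id}$ and cut a horizontal path into two horizontal pieces (which glue back by the uniqueness of horizontal lifts), they are injective, compatible with $\alpha_l$, $\alpha_r$, and all three actions. The direct limit $\mathrm{Hol}_\gamma = \varinjlim_{\lambda \in T} \mathrm{Hol}_\gamma(\lambda)$ therefore inherits the smooth-manifold structure and all the structure maps, and the anafunctor and $\Gamma$-equivariance axioms pass to the limit since they are all expressed by equalities preserved under the colimit.

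The main obstacle, in my estimation, is not any single identity but the smoothness and local-triviality claims: one must produce, for each subdivision, an honest smooth atlas on $\mathrm{Hol}_\gamma(\lambda)$ (using horizontal lift maps as charts, which are smooth by smooth dependence of ODE solutions on initial conditions) and then argue that the direct limit topology is again that of a smooth manifold — this requires that the directed system eventually stabilizes locally, or equivalently that each $f_{\lambda,\lambda'}$ is an open embedding. Verifying that the quotient by $\sim$ is still a manifold (i.e.\ that $\sim$ is a smooth equivalence relation with good quotient) is the technically delicate point; everything else reduces to bookkeeping with the crossed-module axioms and the connection equivariance equations recalled above.
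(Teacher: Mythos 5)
First, a point of reference: the paper does not actually prove this proposition. Section 2 is an exposition of Waldorf's construction, the statement is quoted from \cite{waldorf2}, and the authors explicitly send the reader to that paper (in particular its Section 5) for the arguments. So there is no in-paper proof to compare yours against; the relevant benchmark is Waldorf's own proof.

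Measured against what actually has to be established, your proposal is a correct checklist of the axioms but not yet a proof, and one of its load-bearing claims is misplaced. You base principality of the right $\mathcal{P}_{\gamma(1)}$-action on ``existence and uniqueness of horizontal lifts'' of $\gamma|_{[t_{i-1},t_i]}$ from a given starting point. In this generality $\pi:\text{Obj}(\mathcal P)\to M$ is only a surjective submersion and $(\text{pr}_1,R)$ is only a weak equivalence, so $\ker\Omega^a$ need not be a complement to $\ker d\pi$ and horizontal lifts through a point are in general \emph{not} unique; the condition $\Omega^a(\beta'(t))=0$ is a constraint, not an ODE with a unique solution. That non-uniqueness is exactly what the relations $\sim_j$ (identifying lifts that differ by a horizontal path in $\text{Mor}(\mathcal P)$) are designed to absorb, and principality must be proved on the quotient rather than deduced from ODE uniqueness upstairs. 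Relatedly, the items you defer as ``routine'' or flag as ``the technically delicate point'' --- that $\sim$ is a regular equivalence relation with smooth quotient, that the transition maps $f_{\lambda,\lambda'}$ are in fact bijections (again thanks to $\sim$, which lets one slide the inserted morphisms back to the old subdivision points) so the colimit stabilizes, and that the result carries a manifold structure --- are precisely the substance of the proposition; a proof that postpones all of them has not yet begun. The equivariance and anchor computations described in your second paragraph are indeed straightforward and correctly identified, and would go through once the foundational issues above are settled.
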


Before proceeding to the construction of the 2-dimensional holonomies we need the following definition:
\begin{definition}
  A smooth bigon in $M$ is a smooth map $\Sigma:I^2\to M$ such that $\Sigma(s,0)=x$ and $\Sigma(s,1)=y$ for all $s\in I$. In other words, a bigon is a fixed-ends homotopy between the paths $\gamma(t)=\Sigma(0,t)$ and $\gamma'(t)=\Sigma(1,t)$. In this case we write $\Sigma:\gamma\Rightarrow\gamma'$.\\

  A bigon $\Sigma:\gamma\Rightarrow \gamma'$ is called small, if there exist $n\in \mathbb{N}$, $\lambda\in T_n$ and sections $\sigma_i:U_i\to \text{Obj}(\mathcal P)$ defined on open sets $U_i$ such that
  \[
  \Sigma(\{(s,t) \ | \ t_{i-1}\leq t \leq t_i, 0\leq s\leq 1\})\subset U_i.
  \]
\end{definition}
To define the holonomy along a 2-simplex $\sigma:\Delta_2\to M$ we first consider the bigon $\Sigma=\sigma\circ\Theta_2:\gamma\to\gamma'$. The idea is to subdivide $\Sigma$ in small bigons $\Sigma_i$ and then define a $\Gamma$-equivariant transformation $\varphi_{\Sigma_i}^{\text{small}}:\text{Hol}_{\gamma_{i-1}}\to \text{Hol}_{\gamma_i}$ between the parallel transports along $\gamma_{i-1}$ and $\gamma_i$, for each small bigon $\Sigma_i$ in the subdivision of $\Sigma$. Finally the $\Gamma$-equivariant transformation for $\Sigma$ is defined as the composition $\varphi_{\Sigma}(s):=\varphi_{\Sigma_n}^{\text{small}}\circ \cdots \circ\varphi_{\Sigma_1}^{\text{small}}$.

\begin{definition}
Let $\Sigma:\gamma \Rightarrow \gamma'$ be a bigon and $\xi\in \text{Hol}_{\gamma}$. A \emph{horizontal lift} of $\Sigma$ with source $\xi$ is a tuple $(n,\lambda,\{\Phi_i\}_{i=1}^n,\{\rho_i\}_{i=1}^n,\{g_i\}_{i=1}^n)$ consisting of $n\in \mathbb{N}$, a subdivision $\lambda\in T_n$ and smooth maps
\begin{itemize}
	\item $\Phi_i:[0,1]\times [t_{i-1},t_i]\to \text{Obj}(\mathcal P)$,
	\item $\rho_i:[0,1]\to \text{Mor}(\mathcal P)$ with $\rho_0$ and $\rho_n$ constant,
	\item $g_i:[0,1]\to G$ with $g_i(0)=1$,
\end{itemize}

such that the following conditions are satisfied:
\begin{enumerate}
	\item $\Phi_i$ is a lift of $\Sigma$, i.e., $\pi\circ \Phi_i=\Sigma|_{[0,1]\times [t_{i-1},t_i]}$ for all $1\leq i\leq n$.
	\item $t(\rho_i(s))=\Phi_{i+1}(s,t_i)$ for all $0\leq i < n$ and $s(\rho_i(s))=R(\Phi_i(s,t_i),g_i(s))$ for all $1\leq i \leq n$.
	\item The paths $\gamma'_i(t):=\Phi_i(1,t)$, $\nu_i(s):=\Phi_i(s,t_{i-1})$ and $\rho_i$ are horizontal for all $1\leq i \leq n$.
	\item $\xi=\rho_n\ast\gamma_n\ast\cdots\ast\gamma_1\ast \rho_0$ with $\gamma_i(t):=\Phi_i(0,t)$ and $\rho_i:=\rho_i(0)$.
\end{enumerate}
\end{definition}

\begin{lemma}
For every small bigon $\Sigma:\gamma\Rightarrow \gamma'$ and every $\xi\in \text{Hol}_{\gamma}$ there exists a horizontal lift with source $\xi$.
\end{lemma}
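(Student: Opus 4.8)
The plan is to construct the horizontal lift step by step along the subdivision of the small bigon, using the defining property of a small bigon together with ODE-solving on the structure groups. Since $\Sigma:\gamma\Rightarrow\gamma'$ is small, fix $n$, a subdivision $\lambda=(t_0,\dots,t_n)\in T_n$ and sections $\sigma_i:U_i\to\mathrm{Obj}(\mathcal P)$ with $\Sigma([0,1]\times[t_{i-1},t_i])\subset U_i$. I would also start from a chosen representative $\xi=\rho_n\ast\gamma_n\ast\cdots\ast\gamma_1\ast\rho_0\in\mathrm{Hol}_\gamma$; after refining $\lambda$ if necessary (using the maps $f_{\lambda,\lambda'}$, which only add identities and split horizontal paths), I may assume $\xi$ is presented on the very subdivision $\lambda$ adapted to the sections $\sigma_i$. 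This reduces the problem to producing, band by band, the data $\Phi_i,\rho_i,g_i$.

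The key construction is on a single band $[0,1]\times[t_{i-1},t_i]$. Using the section $\sigma_i$, write $\Sigma|_{[0,1]\times[t_{i-1},t_i]}$ and lift it to $\mathrm{Obj}(\mathcal P)$ by composing with $\sigma_i$ and then correcting by a $G$-valued function so that the lift is horizontal in the $t$-direction and matches the prescribed boundary behaviour: concretely, solve the parallel-transport ODE (\ref{poe}) in the $t$-variable, with $s$ as a parameter, to get a map $\Phi_i:[0,1]\times[t_{i-1},t_i]\to\mathrm{Obj}(\mathcal P)$ whose restriction to $s=0$ is $\gamma_i$ (after a one-time $G$-adjustment at $t=t_{i-1}$), so that condition (4) holds, and whose $t$-curves $\Phi_i(s,-)$ are horizontal, giving part of condition (3); smooth dependence on $s$ follows from smooth dependence of solutions of ODEs on parameters. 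For condition (3) on the edge curves $\nu_i(s)=\Phi_i(s,t_{i-1})$ I would likewise solve (\ref{poe}) in the $s$-direction along $t=t_{i-1}$, and then use this to define $g_i:[0,1]\to G$, $g_i(0)=1$, as precisely the $G$-valued discrepancy between $\Phi_i(s,t_{i-1})$ (transported horizontally in $s$) and the value forced by the previous band, so that $s(\rho_i(s))=R(\Phi_i(s,t_i),g_i(s))$ can be satisfied. Finally $\rho_i:[0,1]\to\mathrm{Mor}(\mathcal P)$ is obtained by solving the $\Omega^b$-horizontality ODE for morphisms with the initial value $\rho_i(0)$ coming from $\xi$; one checks its source and target match $\Phi_i,\Phi_{i+1}$ by uniqueness of solutions and the fact that $s$ and $t$ commute with the groupoid structure maps, and $\rho_0,\rho_n$ come out constant because the endpoints of a bigon are constant, so the relevant ODE has zero right-hand side.

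The step I expect to be the main obstacle is the \emph{matching across consecutive bands}: ensuring that $t(\rho_i(s))=\Phi_{i+1}(s,t_i)$ for all $s$ while simultaneously keeping $\rho_i$ horizontal and keeping $g_i$ well-defined with $g_i(0)=1$. The two bands are lifted using different sections $\sigma_i,\sigma_{i+1}$ on the overlap $U_i\cap U_{i+1}$, so the comparison of the two lifts over $t=t_i$ is governed by the transition data $g_{i(i+1)}$ of the bundle; one must check that the $G$- and $\mathrm{Mor}(\Gamma)$-valued corrections can be absorbed into the choices of $\rho_i$ and $g_i$ without destroying horizontality — this is where the compatibility between the connection forms $\Omega^a,\Omega^b$ (the transformation law in the definition of a connection) and the weak-equivalence/gluing structure of $\mathcal P$ is used. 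Once the single-band construction and the inductive gluing are in place, assembling the tuple $(n,\lambda,\{\Phi_i\},\{\rho_i\},\{g_i\})$ and verifying conditions (1)--(4) is a routine check, and the lemma follows.
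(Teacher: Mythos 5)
The paper does not actually prove this lemma: it is stated without proof as part of the summary of Waldorf's parallel-transport construction, and the argument lives in \cite{waldorf2}, so there is no in-paper proof to compare yours against. That said, your overall strategy --- refine the subdivision so that $\xi$ is presented on the bands of the small bigon, lift each band $[0,1]\times[t_{i-1},t_i]$ through the section $\sigma_i$ by solving the $G$-valued parallel-transport ODE first along the left edge in $s$ and then in the $t$-direction, and glue consecutive bands with morphism paths $\rho_i$ and correction functions $g_i$ --- is the natural one and is essentially the route taken in \cite{waldorf2}.

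The problem is that the step you yourself flag as ``the main obstacle'' is exactly the content of the lemma, and your sketch does not close it. Concretely, for each $1\le i\le n-1$ you need a path $\rho_i(s)$ in $\text{Mor}(\mathcal P)$ with $\rho_i(0)$ prescribed by $\xi$, with $\Omega^b(\rho_i'(s))=0$, with target $t(\rho_i(s))=\Phi_{i+1}(s,t_i)$, and whose source is of the form $R(\Phi_i(s,t_i),g_i(s))$. The condition $\Omega^b(\rho_i'(s))=0$ alone is underdetermined (it is a single $\mathfrak h$-valued constraint on a tangent vector to $\text{Mor}(\mathcal P)$), so ``solving the $\Omega^b$-horizontality ODE with initial value $\rho_i(0)$'' is not yet a well-posed problem; one must couple it to the prescribed target path and then use the transformation law relating $R^*\Omega^b$ to $\Omega^a$, $\Omega^b$ and the Maurer--Cartan form, together with principality of the $\Gamma$-action, to show that a horizontal solution with that target exists and that its source automatically differs from the horizontal lift $\Phi_i(\cdot,t_i)$ by a unique $g_i(s)\in G$ with $g_i(0)=1$. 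This is where the two bands' different sections and the transition data actually enter, and it is the only nontrivial point of the proof; asserting that the corrections ``can be absorbed into the choices'' leaves the lemma unproved. Two smaller points: $g_i$ measures the mismatch at the right edge $t=t_i$ of the $i$-th band, not at $t_{i-1}$ as you write; and since $\xi$ is an equivalence class under $\sim$, you should either fix a representative throughout or remark that an existence statement only requires the construction for one chosen representative.
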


Finally, for an arbitrary bigon $\Sigma:\gamma\Rightarrow \gamma'$ there exists a subdivision $s\in T_n$ such that the pieces $\Sigma_i(s,t):=\Sigma((s_i-s_{i-1})s+s_{i-1},t)$ are small. Then we define
\[\varphi_{\Sigma}(s):=\varphi_{\Sigma_n}^{\text{small}}\circ\cdots\circ \varphi_{\Sigma_1}^{\text{small}}.\]

The map $\varphi_{\Sigma}(s)$ is independent of the choice of $s$.

\begin{proposition}
The map $\varphi_{\Sigma}:\text{Hol}_{\gamma}\to \text{Hol}_{\gamma'}$ is a $\Gamma$-equivariant transformation.
\end{proposition}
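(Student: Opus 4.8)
The plan is to reduce the statement to the case of small bigons. By definition $\varphi_\Sigma(s)=\varphi_{\Sigma_n}^{\mathrm{small}}\circ\cdots\circ\varphi_{\Sigma_1}^{\mathrm{small}}$ for a subdivision of $\Sigma$ into small bigons, and we already know this composite is independent of the subdivision; since a composite of $\Gamma$-equivariant transformations of anafunctors is again one (it is a composite of smooth, equivariant, anchor-preserving maps, and each such property is preserved under composition), it suffices to prove the proposition when $\Sigma$ is small, so that $\varphi_\Sigma=\varphi_\Sigma^{\mathrm{small}}$. Recall that $\varphi_\Sigma^{\mathrm{small}}(\xi)$ is read off from the $s=1$ edge of a horizontal lift of $\Sigma$ with source $\xi$, whose existence is guaranteed by the preceding lemma. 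The first point to establish is that this assignment is well defined, i.e. independent of the chosen horizontal lift (of the subdivision $\lambda$, the lifts $\Phi_i$, the morphisms $\rho_i$ and the maps $g_i$); this rests on the flatness of $\Omega$ and on the uniqueness of horizontal lifts of a path with prescribed initial point, and it is the technical core of the construction.

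Granting well-definedness, I would verify the defining properties of a transformation of anafunctors in turn. For smoothness, note that a horizontal lift is built out of parallel transports, which solve ordinary differential equations depending smoothly on initial conditions and on the homotopy parameter $s$; passing to the direct limit over subdivisions then yields a smooth map $\mathrm{Hol}_\gamma\to\mathrm{Hol}_{\gamma'}$. For compatibility with the anchors, use that the morphisms $\rho_0$ and $\rho_n$ in a horizontal lift are required to be constant, so $\alpha_l(\varphi_\Sigma^{\mathrm{small}}(\xi))=s(\rho_0(1))=s(\rho_0(0))=\alpha_l(\xi)$ and similarly $\alpha_r\circ\varphi_\Sigma^{\mathrm{small}}=\alpha_r$. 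For equivariance under the left $\mathcal P_{\gamma(0)}$-action and the right $\mathcal P_{\gamma(1)}$-action, observe that starting from a horizontal lift with source $\xi$ and replacing the (constant) path $\rho_0$ by $\rho_0\circ\rho^{-1}$, respectively $\rho_n$ by $\rho^{-1}\circ\rho_n$, again gives a horizontal lift — the new $\rho_0,\rho_n$ remain constant, horizontality is untouched, and conditions (1)--(4) persist — whose source is $\rho\circ\xi$, respectively $\xi\circ\rho$, and whose top edge is $\rho\circ\varphi_\Sigma^{\mathrm{small}}(\xi)$, respectively $\varphi_\Sigma^{\mathrm{small}}(\xi)\circ\rho$.

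For $\mathrm{Mor}(\Gamma)$-equivariance the connection enters. The key point is that the $\Gamma$-action on $\mathcal P$ preserves horizontality: the connection satisfies $R^*\Omega=\Ad^{-1}_{\mathrm{pr}_\Gamma}(\mathrm{pr}^*_{\mathcal P}\Omega)+\mathrm{pr}^*_\Gamma\Theta$, and acting by a fixed element of $\mathrm{Mor}(\Gamma)$ amounts to pulling back along a constant map into $\Gamma$, along which $\Theta$ vanishes, so $R_{(h,g)}^*\Omega^a=\Ad_g^{-1}(\Omega^a)$ and $R_{(h,g)}^*\Omega^b=(\alpha_{g^{-1}})_*\Ad_h^{-1}(\Omega^b)$; hence a horizontal path stays horizontal after acting by $(h,g)$. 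Applying the $\mathrm{Mor}(\Gamma)$-action to a horizontal lift of $\Sigma$ with source $\xi$ — the twist $(h^{-1},\tau(h)g)$ on the $\rho_0$-component, $g$ on the $\Phi_i$, the $\rho_i$ and (by conjugation) the auxiliary maps $g_i$, exactly as in the formula for the action on $\mathrm{Hol}_\gamma(\lambda)$ — therefore produces a horizontal lift of $\Sigma$ with source $\xi\cdot(h,g)$, whose top edge is $\varphi_\Sigma^{\mathrm{small}}(\xi)\cdot(h,g)$; well-definedness then gives $\varphi_\Sigma^{\mathrm{small}}(\xi\cdot(h,g))=\varphi_\Sigma^{\mathrm{small}}(\xi)\cdot(h,g)$, and the reduction to small bigons finishes the proof.

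I expect the main obstacle to be the well-definedness of $\varphi_\Sigma^{\mathrm{small}}$, together with the bookkeeping it forces in the $\Gamma$-equivariance step: one must check that after applying the $\Gamma$-action the maps $g_i$ (which turn out to transform as $g_i\mapsto g^{-1}g_i g$) still satisfy condition~(2), relating $s(\rho_i)$ to $R(\Phi_i(\cdot,t_i),g_i)$, and $g_i(0)=1$, so that what one obtains back is genuinely a horizontal lift and not merely a lift. Once horizontality is seen to be $\Gamma$-invariant and the uniqueness of horizontal lifts of paths is available, the remaining verifications are routine manipulations with the explicit formulas for the anchors and the three actions on $\mathrm{Hol}_\gamma(\lambda)$.
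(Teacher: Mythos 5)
First, a point of reference: the paper itself gives no proof of this proposition — it is quoted verbatim from Waldorf's construction of parallel transport in principal $2$-bundles (\cite{waldorf2}) as part of the background review, so there is no in-paper argument to compare yours against. Judged on its own, your outline follows the right strategy (reduce to small bigons, read $\varphi_\Sigma^{\mathrm{small}}(\xi)$ off the top edge of a horizontal lift, and check the anchor and equivariance conditions by acting on the lift), and the verifications you do carry out — anchors via constancy of $\rho_0,\rho_n$, left/right equivariance via pre/post-composition of the constant end morphisms, and $\mathrm{Mor}(\Gamma)$-equivariance via the transformation of the data $(\Phi_i,\rho_i,g_i)$ with $g_i\mapsto g^{-1}g_ig$ — are correct. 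However, the argument is genuinely incomplete where you say ``granting well-definedness'': independence of $\varphi_\Sigma^{\mathrm{small}}(\xi)$ from the choice of subdivision $\lambda$, of the lifts $\Phi_i$, of the $\rho_i,g_i$, and from the representative of $\xi$ modulo $\sim$ is precisely the content that makes $\varphi_\Sigma$ a map at all, and hence part of what the proposition asserts. It is not a consequence of uniqueness of horizontal lifts of paths alone; one must produce, from two horizontal lifts with the same source, a horizontal path of morphisms implementing the relation $\sim$ between the two top edges, and this is where flatness (in particular the vanishing of the fake curvature, which ties $\Omega^b$-horizontality on $\mathrm{Mor}(\mathcal P)$ to $\Omega^a$-horizontality of sources and targets) actually enters. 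As submitted, the proof defers its technical core.

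One local error worth fixing: your formula $R_{(h,g)}^{*}\Omega^b=(\alpha_{g^{-1}})_{*}\mathrm{Ad}_h^{-1}(\Omega^b)$ drops the term $(\alpha_{g^{-1}})_{*}(\tilde\alpha_h)_{*}(s^{*}\Omega^a)$ present in the connection axiom; that term involves $\Omega^a$ pulled back along the source map of $\mathcal P$ and does not vanish merely because the map into $\Gamma$ is constant. Your conclusion — that the paths occurring in a horizontal lift remain horizontal after acting — still holds, but for two separate reasons you should make explicit: the interior morphism paths $\rho_i$ ($1\le i\le n-1$) are acted on by $(1,g)$, for which $(\tilde\alpha_1)_{*}=0$, while $\rho_0$ and $\rho_n$ are acted on by possibly nontrivial $(h,\cdot)$ but are constant, hence trivially horizontal. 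Stated as a general preservation of horizontality under $R_{(h,g)}$, the claim is false.
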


\begin{proposition}
Let $\mathcal{P}$ be a principal $\Gamma$-$2$-bundle with flat connection $\Omega$. Then the assignments $x\mapsto \mathcal{P}_x, [\gamma]\mapsto \text{Hol}_{[\gamma]}$, and $[\Sigma]\mapsto \varphi_{[\Sigma]}$ form a $2$-functor
\[
\text{tra}_{\mathcal{P}}:\pi_{\leq 2}(M)\to \Gamma\text{-Tor}(\mathcal P).
\]
\end{proposition}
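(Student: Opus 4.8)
The plan is to verify directly the axioms of a (weak) 2-functor between bicategories, organised in three stages: (i) well-definedness of the three assignments, (ii) construction of the compositor and unitor 2-isomorphisms together with functoriality on 2-morphisms, and (iii) verification of the coherence (pentagon and triangle) identities. Throughout, the flatness of $\Omega$ -- equivalently the vanishing of the fake curvature (\ref{fcurv}) and of the 3-curvature (\ref{3curv}) -- is what makes the construction descend to $\pi_{\leq2}(M)$.

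\emph{Stage (i).} First I would record that $\text{Hol}_\gamma$ is a well-defined $\Gamma$-equivariant anafunctor: the poset $T=\bigsqcup_nT_n$ is directed, the transition maps $f_{\lambda,\lambda'}$ are compatible so that the colimit exists, and the anchor maps and the three actions descend to it -- this is the content of the Proposition preceding the present one, which rests on the existence of horizontal lifts of paths. The crucial point is then to show that $\varphi_\Sigma:\text{Hol}_\gamma\to\text{Hol}_{\gamma'}$ depends only on the class of $\Sigma$ in $\pi_{\leq2}(M)$, i.e. is invariant under fixed-ends homotopies of $\Sigma$ rel $\partial$. Independence from the chosen subdivision $s$ into small bigons is already stated; for a homotopy of bigons $\Sigma_u$, $u\in I$, one decomposes the cube $I^3$ into small blocks and checks that on each block the solution of the ODE (\ref{soe}) for $h_\sigma$ is unchanged. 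Here vanishing of the fake curvature guarantees that $g_\gamma$, the solution of (\ref{poe}), depends on $\gamma$ only up to thin homotopy, while vanishing of the 3-curvature, $dB+\alpha_*(A)(B)=0$, makes the $t$-integral in (\ref{soe}) invariant under deformations of $\Sigma_s$ with fixed endpoints, by a Stokes argument on the square swept out by the deformation. I would also note that $\varphi_{\text{id}_\gamma}=\text{id}_{\text{Hol}_\gamma}$, since a constant bigon admits the constant horizontal lift, for which (\ref{soe}) has solution $h\equiv\text{id}_H$.

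\emph{Stage (ii).} For the compositor I must produce, for composable paths $\gamma_1:x\to y$ and $\gamma_2:y\to z$, a $\Gamma$-equivariant natural isomorphism $c_{\gamma_2,\gamma_1}:\text{Hol}_{\gamma_2}\circ\text{Hol}_{\gamma_1}\Rightarrow\text{Hol}_{\gamma_2\ast\gamma_1}$. An element of the domain is a pair of formal composites of horizontal paths and morphisms of $\mathcal P$ lying over $\gamma_1$ and over $\gamma_2$; concatenating them, inserting the identity morphism at the midpoint, yields an element of $\text{Hol}_{\gamma_2\ast\gamma_1}(\lambda)$ for any $\lambda\in T$ containing the midpoint, and cofinality of such subdivisions shows this is a well-defined isomorphism of anafunctors, with $\Gamma$-equivariance immediate from the formula for the $\text{Mor}(\Gamma)$-action. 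The unitor comes from identifying $\text{Hol}_{\mathrm{const}_x}$, built from the constant horizontal path, with the identity anafunctor $\mathrm{id}_{\mathcal P_x}$. Finally I would check functoriality of $[\Sigma]\mapsto\varphi_{[\Sigma]}$ for vertical composition, $\varphi_{\Sigma'\bullet\Sigma}=\varphi_{\Sigma'}\circ\varphi_\Sigma$, by choosing a subdivision of $I^2$ adapted to the two stacked bigons and invoking the definition $\varphi_\Sigma=\varphi_{\Sigma_n}^{\text{small}}\circ\cdots\circ\varphi_{\Sigma_1}^{\text{small}}$; and compatibility with horizontal composition/whiskering, which is again a matter of reparametrising and re-subdividing bigons together with the compositor $c$.

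\emph{Stage (iii).} Since the associators and unitors of $\pi_{\leq2}(M)$ are induced by thin reparametrisations of paths and surfaces, and since both (\ref{poe}) and (\ref{soe}) transform covariantly under reparametrisation (by the chain rule and a change of variables), the pentagon for $c$ and the two triangle identities reduce to reparametrisation-invariance of $\text{Hol}$ on simplices, together with the homotopy invariance of $\varphi$ established in Stage (i); assembling the three stages produces the 2-functor $\text{tra}_{\mathcal P}$. I expect the main obstacle to be the homotopy invariance of $\varphi_\Sigma$ in Stage (i): showing the two-dimensional holonomy is insensitive to higher homotopies is exactly where both flatness conditions are indispensable, and the bookkeeping of subdivisions required to make the Stokes argument rigorous -- and to make the anafunctor compositions in Stage (ii) strictly meaningful via cofinal colimits -- is the technically heaviest part of the argument.
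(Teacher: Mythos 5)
First, a point of comparison: the paper does not prove this proposition at all. It appears in the expository Section 2 as part of a summary of Waldorf's construction, and the paper explicitly defers the substance to \cite{waldorf2} (``The equivalence between the local and global definitions of holonomies may be found in section 5 of \cite{waldorf2}''). So there is no in-paper argument to measure your proposal against; the relevant benchmark is Waldorf's proof, and your three-stage plan does track its structure faithfully: well-definedness of $\text{Hol}_\gamma$ via the direct limit over subdivisions, homotopy invariance of $\varphi_\Sigma$ as the crux, compositors from concatenation of formal composites, and coherence from reparametrisation invariance. You also correctly locate where each flatness condition enters -- the fake curvature (\ref{fcurv}) for the 1-dimensional transport and the 3-curvature (\ref{3curv}) for invariance of the surface transport under higher homotopies.

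That said, as a proof your text is an outline rather than an argument, and the gap is concentrated exactly where you say the weight is: Stage (i). The claim that vanishing of $dB+\alpha_*(A)(B)$ makes the $t$-integral in (\ref{soe}) invariant under fixed-ends deformations ``by a Stokes argument on the square swept out by the deformation'' is the entire content of the proposition, and it is not a routine application of Stokes: the integrand involves the conjugating factor $\alpha(g_{\Sigma_s(t)}^{-1})_*$, so one needs a non-abelian Stokes-type identity relating the $u$-derivative of the solution of (\ref{soe}) to the 3-curvature integrated over the swept cube, together with the vanishing of the boundary contributions coming from the fixed ends; this is where both (\ref{poe}) and (\ref{soe}) must be differentiated under the integral sign and the cocycle identities (\ref{diffcocycle1})--(\ref{diffcocycle3}) used to glue across charts. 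Similarly, in Stage (ii) the compositor is defined on representatives in $\text{Hol}_{\gamma_2}\circ\text{Hol}_{\gamma_1}$, but the composite of two anafunctors is itself a quotient (of the fibre product by the diagonal $\mathcal P_y$-action), and you do not verify that concatenation descends to that quotient nor that the result is smooth on the direct limit. None of these steps is wrong in spirit, but each is asserted rather than carried out, so the proposal should be read as a correct roadmap to the proof in \cite{waldorf2} rather than a self-contained proof.
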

The equivalence between the local and global definitions of holonomies may be found in section 5 of \cite{waldorf2}. The main observation is that the global construction of holonomies relies on lifting paths and surfaces horizontally, which is done locally by solving the differential equations (\ref{poe}) and (\ref{soe}).\\

   The construction above gives a map $T:2\text{-}\mathcal Bun^f{_\mathcal{P}}(M)\to \text{Rep}_{\leq2}(M,\mathcal{P})$ the map from flat principal 2-bundles to representations of the fundamental 2-groupoid of $M$.

\section{Higher local systems}

In this section we will review the higher Riemann Hilbert correspondence of \cite{Block}. We will follow the exposition in \cite{abadrham}.
Let  $E=\bigoplus_k E^k$ be a $\mathbb Z$-graded vector bundle over $M$. A form $\omega\in\Omega^n(M,E^k)$ has form-degree $n$ and inner-degree $k$. The total degree of $\omega$ is the sum of its form and inner degrees and is denoted $|\omega|=n+k$.
\begin{definition}
  A $\mathbb Z$-graded connection on a graded vector bundle $E\to M$ is a linear operator $D:\Omega(M,E)\to \Omega(M,E)$ that increases the total degree by one and satisfies the Leibniz rule
  \[
  D(\alpha\wedge\omega)=d\alpha\wedge\omega+(-1)^{|\alpha|}\alpha\wedge D(\omega),
  \]
  for homogeneous $\alpha\in\Omega(M)$ and $\omega\in\Omega(M,E)$. The connection is flat if $D^2=0$. \end{definition}
An $\infty$-local system over $M$ is a graded vector bundle together with a flat $\mathbb{Z}$ graded connection.
There is a differential graded category $\Loc_\infty(M)$  whose objects are $\infty$-local systems and whose morphisms are the graded vector spaces:
  \[
  \text{Hom}_{\Loc_\infty(M)}(E,E')=\Omega(M,\text{Hom}(E,E'))
  \]
  with differential:
  \[ \overline{D}(\eta)=D' \circ \eta -(-1)^{|\eta|} \eta\circ  D.\]
On an open subset  $U\subseteq M$ where the bundle is trivial, a flat $\ZZ$-graded connection can be written as $d-\omega$ where $\omega\in\Omega(U,\End(E))$ is a Maurer-Cartan form of total degree 1. This means that $\omega=\sum_i\omega_i$ with $\omega_i\in\Omega^i(U,\End^{1-i}(E))$ and the Maurer-Cartan equation is satisfied $d\omega=\omega\wedge\omega$. Notice that in degree zero, the Maurer-Cartan equation reads $\omega_0\wedge\omega_0=0$. Since $\omega_0\in\Omega^0(U,\End^1(E))$, this means that $\omega_0$ endows the fibers of $E$ with a cochain complex structure. Therefore we will write $\partial=\omega_0$.

\begin{definition}
A cochain complex bundle is a graded vector bundle $p:E\to M$ with a morphism of bundles $\partial:E^\bullet\to E^{\bullet+1}$ such that $\partial^2=0$. The local trivializations are required to be isomorphisms of complexes of vector bundles on each fibre, i.e., there is a complex $(V,\partial)$ such that if $E$ is trivial over a certain open $U\subset M$, then the trivialization map $\varphi_U:p^{-1}(U)\to U\times V$ restricts to an isomorphism of complexes $\varphi_x:E_x\to (V,\partial)$.
\end{definition}
\begin{lemma}\label{loccomplex}
Let $(E,D)$ be an $\infty$-local system over a connected manifold $M$ of dimension $n$ with projection $\pi:E\to M$. Then $E$ is a cochain complex bundle.
\end{lemma}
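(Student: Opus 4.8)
The plan is to extract, from the flat $\mathbb{Z}$-graded connection $D$, the degree-zero piece of its local Maurer-Cartan form and show that this piece is globally well-defined on $E$, giving the desired fibrewise differential $\partial$. As observed in the paragraph preceding the statement, over any open $U\subseteq M$ on which $E$ is trivial we may write $D = d - \omega$ with $\omega = \sum_i \omega_i$, $\omega_i \in \Omega^i(U,\End^{1-i}(E))$, and the flatness $D^2=0$ unwinds to the Maurer-Cartan equation $d\omega = \omega\wedge\omega$; its form-degree-zero component reads $\omega_0\wedge\omega_0 = 0$, i.e. $\partial_U := \omega_0 \in \Omega^0(U,\End^1(E))$ satisfies $\partial_U^2 = 0$. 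So on each trivializing chart we already have a fibrewise square-zero degree-$1$ endomorphism; the content of the lemma is (i) that these local pieces glue to a global bundle map $\partial:E^\bullet\to E^{\bullet+1}$, and (ii) that the local trivializations can be arranged to be isomorphisms of complexes onto a fixed model complex $(V,\partial)$.

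First I would establish the gluing. On an overlap $U\cap U'$ the two connections $d-\omega$ and $d-\omega'$ are related by the transition $\varphi_{U'}\circ\varphi_U^{-1}$, which is a degree-preserving (total-degree-$0$) bundle automorphism $g$, and a short computation shows $\omega' = g\omega g^{-1} + (dg)g^{-1}$ — but $g$ is a $0$-form valued in degree-$0$ endomorphisms, so $(dg)g^{-1}$ has form-degree $\geq 1$ and contributes nothing in form-degree $0$; comparing form-degree-zero parts gives $\partial_{U'} = g\,\partial_U\, g^{-1}$ on objects, i.e. $\partial_U$ and $\partial_{U'}$ agree as bundle maps on the overlap once transported. (Here one must be slightly careful: the transition $g$ need not commute with the model differential, so $\partial$ varies, but it does so by conjugation, hence glues to a genuine global bundle map $\partial:E\to E$ raising inner degree by one, with $\partial^2=0$ globally since this holds locally.) This yields $(E,\partial)$ as a graded bundle with a square-zero endomorphism.

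Next I would upgrade the local trivializations to complex trivializations, which is where the hypotheses that $M$ is connected and finite-dimensional are used. Fix a basepoint $x_0 \in M$ and set $(V,\partial) := (E_{x_0}, \partial_{x_0})$. For any trivializing chart $U$, $\varphi_U$ identifies $\partial|_U$ with a smoothly varying family of square-zero operators $\partial_x$ on $V$; since $M$ is connected one can, after refining the cover to contractible opens, trivialize this family of complexes — on a contractible $U$ the family $\{(V,\partial_x)\}_{x\in U}$ of complexes is (smoothly) isomorphic to the constant family, because the space of complex structures deformation-equivalent to a fixed one is, locally, a homogeneous space for the (contractible, on a chart) gauge group and the obstruction to such a trivialization is a bundle over $U$ which is trivial. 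Composing $\varphi_U$ with such a fibrewise isomorphism produces trivializations that are isomorphisms of complexes onto $(V,\partial)$, as required by the definition of a cochain complex bundle.

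The main obstacle is the last step: making precise that a smooth family of mutually quasi-isomorphic (indeed, by the degree-zero MC equation, all genuinely isomorphic-as-complexes once we know the fibres are all isomorphic to $(V,\partial)$) square-zero endomorphisms over a contractible base is trivializable as a family of complexes. One clean way is to note that $\dim H^k(E_x,\partial_x)$ is locally constant in $x$ (upper semicontinuity of the rank of $\partial_x$ in both directions, using that $M$ is connected), hence constant; choosing smoothly varying splittings $E_x = \im\partial_x \oplus \mathcal H_x \oplus C_x$ over a contractible chart — which exist since the relevant sub-bundles are smooth, their ranks being constant — exhibits $(E|_U,\partial)$ as isomorphic to the constant complex, and then connectedness propagates the identification of the model. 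I would carry out this splitting argument in detail and leave the remaining verifications (smoothness, independence of choices up to the isomorphisms already built into the definition) to routine checking.
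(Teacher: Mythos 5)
Your first two steps --- extracting $\partial_U=\omega_0$ from the form-degree-zero component of the Maurer--Cartan equation, and gluing the local pieces via $\partial_{U'}=g\,\partial_U\,g^{-1}$ into a global square-zero bundle map --- are correct and agree with the first half of the paper's proof. The gap is in your final step. The claim that $\dim H^k(E_x,\partial_x)$ is locally constant does \emph{not} follow from semicontinuity of the rank of $\partial_x$: the rank of a smooth family of linear maps is only lower semicontinuous, and ``in both directions, using that $M$ is connected'' is not an argument. For a bare smooth family of square-zero degree-one endomorphisms the cohomology genuinely can jump: over $U=\mathbb{R}$ take the trivial graded bundle with fibre $\mathbb{R}$ in degree $0$ and $\mathbb{R}$ in degree $1$, and $\partial_x$ equal to multiplication by $x$; then $\partial_x^2=0$ for all $x$, yet $H^\bullet(E_x,\partial_x)$ jumps at $x=0$. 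So your splitting argument cannot be carried out from the data you have used so far.

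What rules out such examples --- and what your proposal never invokes --- is the form-degree-one component of $D^2=0$, namely $d\omega_0=\omega_0\wedge\omega_1+\omega_1\wedge\omega_0$, which says that $\partial=\omega_0$ is covariantly constant for the ordinary connection $\nabla=d-\omega_1$ induced on $\End(E)$. This is precisely the ingredient the paper uses: parallel transport for $\nabla$ along radial paths in a coordinate chart centred at $x_0$ gives isomorphisms of \emph{complexes} $T_{\gamma_x}:(E_x,\partial_x)\to(E_{x_0},\partial_{x_0})$ depending smoothly on $x$, and $\varphi(p)=(\pi(p),T_{\gamma_{\pi(p)}}(p))$ is already the required trivialization as a cochain complex bundle; connectedness of $M$ then identifies all fibres with a single model $(V,\partial)$. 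If you insert this covariant-constancy observation your semicontinuity/splitting discussion becomes unnecessary, since parallel transport hands you the trivialization directly; without it, the argument does not close.
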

\begin{proof}
For an arbitrary $x_0\in M$, choose a neighbourhood $U\subset M$ over which $E$ is trivial and a coordinate chart $\psi:U\to\mathbb{R}^n$ that maps $x_0$ to the origin. Over $U$ the connection takes the form $D=d-\partial-\cdots$, where $\partial \in \Omega^0(U, \text{End}^1(E|_{U}))$. Furthermore, since the connection is flat we have that $\partial^2=0$, which means that the fibers have a chain complex structure with differential $\partial$. Now for any other $x\in U$ let $\gamma_x$ be a path in $U$ connecting $x$ to $x_0$ such that $\psi\gamma_x$ is a radial path connecting $\psi(x)$ to the origin. The parallel transport along these paths provides isomorphisms of cochain complexes $T_{\gamma_x}:E_x\to E_{x_0}$. A local trivialisation as a cochain complex bundle over $U$ is defined by $\varphi:\pi^{-1}(U)\to U\times E_{x_0}$, $\varphi(p)=(\pi(p),T_{\gamma_{\pi(p)}}(p))$.
The fact that $M$ is connected guarantees that all the fibers over $M$ are isomorphic as cochain complexes.
\end{proof}
For a fixed cochain complex bundle $(E,\partial)$ we denote by $\Loc_\infty^{(E,\partial)}(M)$ the category of $\infty$-local systems over $M$ with underlying cochain complex bundle $(E,\partial)$.\\
\begin{definition}
  Let $(E,\partial)$ be a cochain complex bundle over $M$. The linear 2-groupoid of $(E,\partial)$ is the 2-category $GL(E,\partial)$ such that
  \begin{itemize}
    \item The objects are the fibers $(E_x,\partial_x)$ for $x\in M$.
    \item The 1-morphisms are cochain complex morphisms $(E_x,\partial_x)\to(E_y,\partial_y)$ with the usual composition.
    \item The 2-morphisms are (algebraic) homotopies between morphisms of complexes modulo exact maps. The horizontal and vertical compositions
    are the usual ones for homotopies.
  \end{itemize}
  A linear representation of $\pi_{\leq2}(M)$ on $(E,\partial)$ is a 2-functor $\mathcal R:\pi_{\leq2}(M)\to GL(E,\partial)$. Linear representations of $\pi_{\leq2}(M)$ on $(E,\partial)$ will be denoted $\text{Rep}_{\leq2}(M,(E,\partial))$.
\end{definition}

Recall that the infinity groupoid of a manifold $M$ is the simplicial set
\[ \pi_\infty(M):= \mathsf{Sing}(M),\]
of smooth singular chains on $M$.
So far we have defined infinitesimal higher local systems. The Riemann-Hilbert correspondence is about comparing this notion with a global one, represesentations of the infinity groupoid. \\

Given a simplicial set $X_{\bullet}$ with face and degeneracy maps denoted by
\[d_i: X_k\rightarrow X_{k-1} \quad \text{and} \quad s_i: X_k \rightarrow X_{k+1},\]
respectively.  We will use the notation
\begin{eqnarray*}
\front_i&:=&(d_0)^{k-i}:X_k \rightarrow X_i,\\
\back_i&:=&d_{i+1} \circ \dots \circ d_{k} :X_k \rightarrow X_i,
\end{eqnarray*}
for the maps that send a simplex to its $i$-th back and front face.
The $i$-th vertex of a simplex $\sigma \in X_k$
will be denoted $v_i(\sigma)$, or simply $v_i$, when no confusion can arise.
In terms of the above operations, one can write
\[v_i=(\front_0\circ \back_{i})(\sigma). \]

A cochain $F$ of degree $k$ on $X_{\bullet}$ with values in an algebra $\A$ is a map
\begin{align*}
F: X_k \to \A.
\end{align*}
As usual, the cup product of two cochains  $F, F'$ of degree $i$ and $j$, respectively, is the cochain of degree $i+j$ defined by the formula:
\[(F \cup F') (\sigma):= F(\back_i(\sigma)) F'(\front_j(\sigma)).\]

A representation of $X_{\bullet}$ consists of the following data:
\begin{enumerate}
\item A graded vector space $E_x=\bigoplus_{k\in \mathbb{Z}}E_x^k$,
for each zero simplex $x \in X_0$.
\item A sequence of operators $\{F_k\}_{k\geq 0}$, where $F_k$ is a $k$-cochain that assigns to $\sigma \in X_k$ a linear map
\[F_k(\sigma)\in \mathrm{Hom}^{1-k}(E_{v_k(\sigma)},E_{v_0(\sigma)}).\]
\end{enumerate}
These operators are required to satisfy the following  equations:
\begin{equation}\label{structure equations}
\sum_{j=1}^{k-1} (-1)^{j} F_{k-1}(d_j (\sigma))  +
\sum_{j=0}^{k} (-1)^{j+1} (F_{j}\cup F_{k-j})(\sigma)=0.
\end{equation}

Given a manifold $M$, there is a differential graded category $\Rep(\pi_\infty(M))$ whose objects are representations of the infinity groupoid of $M$.
The higher Riemann-Hilbert correspondence proved in \cite{Block}, is the following result.

\begin{theorem}\label{RH}
There is an $\mathsf{A}_\infty$ functor \[ {\cal{I}}:\Loc_\infty(M) \rightarrow \Rep(\pi_\infty(M))\]
which is a quasi-equivalence of differential graded categories.
\end{theorem}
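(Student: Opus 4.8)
The plan is to construct the functor $\mathcal{I}$ explicitly by Chen's iterated integrals, following the route indicated in the introduction: push Maurer–Cartan elements forward along Gugenheim's $\mathsf{A}_\infty$ extension of the de Rham integration map. First I would recall the precise form of an object of $\Loc_\infty(M)$: over a trivializing chart a flat $\ZZ$-graded connection is $d-\omega$ with $\omega=\sum_i\omega_i$, $\omega_i\in\Omega^i(M,\End^{1-i}(E))$, satisfying the Maurer–Cartan equation $d\omega=\omega\wedge\omega$. The target datum is a representation of $\pi_\infty(M)=\mathsf{Sing}(M)$, i.e.\ a family of cochains $\{F_k\}$ with $F_k(\sigma)\in\Hom^{1-k}(E_{v_k(\sigma)},E_{v_0(\sigma)})$ satisfying the structure equations \eqref{structure equations}. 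I would then define, for a smooth singular simplex $\sigma:\Delta_k\to M$,
\[
F_k(\sigma):=\sum_{\ell\geq 0}\ \int_{\Delta_k}\ \sigma^*\omega_{i_1}\cdots \sigma^*\omega_{i_\ell},
\]
the Chen iterated integral over the simplex, with the appropriate sign and ordering conventions (this is the transport/holonomy of the superconnection along $\sigma$). The first key step is a direct verification that, because $\omega$ satisfies the Maurer–Cartan equation, the iterated integrals $F_k$ satisfy precisely the structure equations \eqref{structure equations}; this is Stokes' theorem applied to $\Delta_k$, with the boundary faces $d_j\sigma$ producing the $F_{k-1}(d_j\sigma)$ terms and the Leibniz/Fubini decomposition of $\Delta_i\times\Delta_{k-i}\hookrightarrow\partial(\cdots)$ producing the cup-product terms $(F_j\cup F_{k-j})(\sigma)$.

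Next I would promote this to a functor on morphism complexes. Given $\infty$-local systems $(E,D)$, $(E',D')$ and $\eta\in\Omega(M,\Hom(E,E'))$, I would define $\mathcal{I}$ on higher morphisms again by iterated integrals, interleaving $\eta$ with the connection forms; the collection of these assignments $\{\mathcal{I}_n\}_{n\geq 1}$ constitutes the $\mathsf{A}_\infty$ functor, and the $\mathsf{A}_\infty$ relations for $\{\mathcal{I}_n\}$ follow from the same Stokes-on-simplices mechanism, together with the algebraic identities satisfied by Chen integrals (this is exactly the content of Gugenheim's theorem, now applied with coefficients in $\End(E)$ rather than scalars). It is at this point convenient to invoke Gugenheim's $\mathsf{A}_\infty$ map $\overline\psi:\Omega(M)\to C^\bullet(M)$ as a black box and obtain $\mathcal{I}$ by functoriality of Maurer–Cartan pushforward, which keeps the bookkeeping manageable.

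The remaining, and genuinely substantial, step is to show $\mathcal{I}$ is a quasi-equivalence, i.e.\ that it induces quasi-isomorphisms on all $\Hom$-complexes and is essentially surjective on the homotopy categories. Essential surjectivity up to isomorphism reduces, via Lemma \ref{loccomplex} and a partition-of-unity/\v{C}ech argument, to a local statement where both sides are contractible, so every representation of $\pi_\infty(M)$ is gauge-equivalent to one coming from a superconnection. For the quasi-isomorphism on $\Hom$-complexes I would argue by a Mayer–Vietoris / descent induction on a good cover of $M$: on a contractible open set the statement is the classical de Rham theorem (both complexes compute $\Hom$ in the derived category of the fiber complex), and one glues using that both $\Loc_\infty$ and $\Rep(\pi_\infty)$ satisfy descent together with the five lemma. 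I expect this gluing/acyclic-models portion — verifying the compatibility of $\mathcal{I}$ with restriction maps and running the induction cleanly over a (possibly infinite) good cover — to be the main obstacle; the construction of $\mathcal{I}$ itself and the structure equations are, by contrast, a bounded computation once the sign conventions are pinned down. An alternative to the explicit Chen-integral route is to cite the chain of equivalences through $C(\mathbf{\Omega}(M))\text{-}\mathsf{Mod}$ assembled from \cite{Rivera}, \cite{Holstein}, \cite{abadzgraded}, \cite{abadrham}, but since the paper needs the explicit iterated-integral holonomies for the comparison with 2-bundles, the direct construction is the one to carry out.
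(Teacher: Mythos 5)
The paper does not actually prove Theorem \ref{RH}: it is quoted from \cite{Block}, and all the paper records is the explicit local formula for the holonomy $\hol(\sigma)$ and the remark that the proof goes by pushing Maurer--Cartan elements forward along Gugenheim's $\mathsf{A}_\infty$ de Rham map. Your overall route --- Chen iterated integrals for the holonomies, Stokes on the simplex for the structure equations (\ref{structure equations}), Gugenheim's theorem as the engine for the $\mathsf{A}_\infty$ relations, and a local-to-global descent argument for the quasi-equivalence --- is exactly the strategy the paper alludes to, so at the level of architecture you are on the same track as the source.

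There is, however, one concrete defect worth flagging because the rest of the paper depends on getting it right. Your displayed formula
\[
F_k(\sigma):=\sum_{\ell\geq 0}\ \int_{\Delta_k}\ \sigma^*\omega_{i_1}\cdots \sigma^*\omega_{i_\ell}
\]
is not the correct object, and "appropriate sign and ordering conventions" cannot repair it: a single integral of a wedge product over $\Delta_k$ has no iterated (time-ordered) structure and does not produce an operator of total degree $1-k$. The actual formula, recorded in the paper, is
\[
\hol(\sigma)=\sum_{n\geq0}(-1)^{n}\int_{\Delta_{n}\times I^{k-1}}(\Theta_k)_{(n)}^*\left(p_1^*(\sigma^*\omega)\wedge \cdots \wedge p_{n}^*(\sigma^*\omega)\right),
\]
where $\Theta_k:I^{k}\to\Delta_k$ folds the cube onto the simplex and the first cube coordinate is the one that gets promoted to the ordered configuration space $\Delta_n$. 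It is precisely this $\Delta_n\times I^{k-1}$ domain and the folding map that make the degree count work, make the Stokes computation produce the face terms $F_{k-1}(d_j\sigma)$ and the cup products $(F_j\cup F_{k-j})(\sigma)$, and make the $k=2$ case match the $\Theta_2$ of Remark \ref{Theta2} used in Lemma \ref{hol}. Your quasi-equivalence paragraph is an outline rather than a proof, but since that is the substantive content of \cite{Block} and the paper itself only cites it, the genuinely missing ingredient in your write-up is the correct shape of the iterated integral, not the descent argument.
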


The theorem above is proved by providing an explicit formula for the generalized holonomies associated to simplices of all dimensions.
In the local case, where the superconnection takes the form $D=d-\omega$, the holonomy $\mathsf{hol}(\sigma)$ associated to a simplex
$\sigma: \Delta_k \rightarrow M$ is given by:

\[
\mathsf{hol}(\sigma)=\sum_{n\geq0}(-1)^{n}\int_{\Delta_{n}\times I^{k-1}}(\Theta_k)_{(n)}^*\left(p_1^*(\sigma^*(\omega))\wedge \cdots \wedge p_{n}^*(\sigma^*(\omega))\right),
\]

where $p_i: (\Delta_k)^n \rightarrow \Delta_k$ is the $i$-th projection, $\Theta_k : I^{ k} \rightarrow \Delta_k$ is certain piecewise linear map folding the cube onto the simplex  and  $(\Theta_k)_{(n)}: \Delta_n \times I^{k-1} \rightarrow (\Delta_k)^n$ is given by:
\[ (\Theta_k)_{(n)}(t_1, \dots, t_n, x_2, \dots, x_k)=(\Theta_k(t_1, x_2,\dots, x_k),\dots, \Theta_k(t_n, x_2, \dots, x_k)).\]

We observe that, for $k=2$ the map $\Theta_2$ coincides with the one defined in Remark \ref{Theta2}.

\begin{lemma}\label{map2}
  There is a map ${\cal{T}}:\Loc_\infty^{(E,\partial)}(M)\to \text{Rep}_{\leq2}(M,(E,\partial))$ for any fixed cochain complex bundle $(E,\partial)$.
\end{lemma}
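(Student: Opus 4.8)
The plan is to construct the $2$-functor $\mathcal{R} = \mathcal{T}(E,D) \colon \pi_{\leq 2}(M) \to GL(E,\partial)$ directly from the generalized holonomy formula of Theorem~\ref{RH}, restricted to simplices of dimension $\leq 2$, and then to check the $2$-functor axioms. First I would work locally: choose a good cover of $M$ by opens over which $(E,\partial)$ is trivial and $D = d - \omega$ for a Maurer-Cartan form $\omega = \partial + \omega_1 + \omega_2 + \cdots$ of total degree $1$. For a piecewise-smooth path $\gamma \colon \Delta_1 \to M$ I set $\mathcal{R}(\gamma) := \hol(\gamma) \in \Hom^0(E_{\gamma(1)}, E_{\gamma(0)})$, using the $k=1$ case of the holonomy formula (the usual time-ordered exponential of $\gamma^*\omega$); flatness $D^2 = 0$, i.e.\ $d\omega = \omega \wedge \omega$, together with the fact that $\hol(\gamma)$ has total degree $0$ and commutes with $\partial$ up to the degree-$1$ piece of the Maurer-Cartan equation, shows $\mathcal{R}(\gamma)$ is a genuine cochain map $(E_{\gamma(1)},\partial) \to (E_{\gamma(0)},\partial)$. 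For a $2$-simplex $\sigma \colon \Delta_2 \to M$ I set $\mathcal{R}(\sigma) := \hol(\sigma) \in \Hom^{-1}(E_{v_2},E_{v_0})$, using the $k=2$ case, and the structure equation \eqref{structure equations} for $k=2$ is exactly the statement that $\hol(\sigma)$ is an algebraic homotopy between $\mathcal{R}(\sigma([v_0,v_2]))$ and $\mathcal{R}(\sigma([v_0,v_1]))\circ \mathcal{R}(\sigma([v_1,v_2]))$ — here I would invoke the fact, noted in the excerpt, that the folding map $\Theta_2$ used in the holonomy formula coincides with the one from Remark~\ref{Theta2}, so that $\hol(\sigma)$ really is built from the homotopy $\sigma \circ \Theta_2$.

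Next I would verify that these assignments respect the bicategory structure of $\pi_{\leq 2}(M)$: functoriality on $1$-morphisms up to the specified $2$-morphisms (composition of paths goes to composition of cochain maps, again a consequence of the $k=1$, $k=2$ structure equations applied to degenerate or concatenated simplices), well-definedness on $2$-morphisms (a $2$-morphism in $\pi_{\leq 2}(M)$ is a homotopy class of $2$-simplices modulo higher homotopy, and one must check $\hol(\sigma)$ depends only on the class modulo exact maps — this follows by applying the $k=3$ structure equation to a $3$-simplex realizing such a higher homotopy, showing two homotopic $2$-simplices give homotopies differing by a $\partial$-exact term), and compatibility with vertical and horizontal composition of $2$-morphisms. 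Invariance of $\hol(\gamma)$ and $\hol(\sigma)$ under piecewise-smooth reparametrization is a standard consequence of the integral formulas. Since $GL(E,\partial)$ has objects the fibers $(E_x,\partial_x)$, the object assignment $x \mapsto (E_x,\partial_x)$ is immediate.

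To globalize, I would note that the holonomy formula is intrinsically defined once $D$ is given (the integrand $\sigma^*(\omega)$ makes sense wherever a local trivialization is chosen, and the transition maps intertwine the local $\omega$'s precisely because they come from a globally defined flat $D$), so the locally-defined $\mathcal{R}$ patches to a global $2$-functor; alternatively one invokes Theorem~\ref{RH}, which already provides the global $\mathsf{A}_\infty$-functor $\mathcal{I}$ to $\Rep(\pi_\infty(M))$, and observes that truncating a representation of $\pi_\infty(M)$ to simplices of dimension $\leq 2$ and passing to homotopy-classes-mod-exact yields exactly a linear representation of $\pi_{\leq 2}(M)$ in the sense of the definition above — this truncation is functorial, giving $\mathcal{T}$ as the composite. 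The main obstacle I expect is the well-definedness of $\mathcal{R}$ on $2$-morphisms: one must show that $\hol(\sigma)$, as an element of $\Hom^{-1}$, descends to homotopy-modulo-exact, i.e.\ that higher-homotopic $2$-simplices produce homotopies that agree up to a coboundary $\partial s \pm s \partial$, and that vertical/horizontal composition of these homotopy classes matches the concatenation formulas built into the iterated-integral expression; controlling the error terms here requires the $k=3$ instance of \eqref{structure equations} and a careful bookkeeping of the folding maps $\Theta_k$, which is where most of the real work lies. Everything else — the cochain-map property on edges, functoriality, reparametrization invariance — reduces to the low-degree structure equations and standard properties of Chen's iterated integrals.
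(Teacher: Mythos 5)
Your proposal follows essentially the same route as the paper: define $\mathcal{R}$ by restricting the holonomies $\hol(\sigma)$ supplied by Theorem~\ref{RH} to simplices of dimension $\leq 2$, and use the structure equations \eqref{structure equations} for functoriality and for well-definedness on homotopy classes of $2$-simplices. The one place where the paper is more precise than your sketch is the well-definedness step: a fixed-ends homotopy $H\colon \Delta_2\times I\to M$ is not a single $3$-simplex, so the paper triangulates the prism into three $3$-simplices $\sigma_1,\sigma_2,\sigma_3$, applies \eqref{structure equations} to each, and additionally invokes the vanishing of holonomies on degenerate simplices to cancel the spurious faces --- your phrase ``a $3$-simplex realizing such a higher homotopy'' should be read as this prism decomposition.
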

\begin{proof}
Geometrically, this corresponds to the fact that the product of a 2-simplex with an interval is the union of three 3-simplices:
\begin{figure}[h!]\label{simplices}
  \begin{center}
  \includegraphics[scale= 0.2]{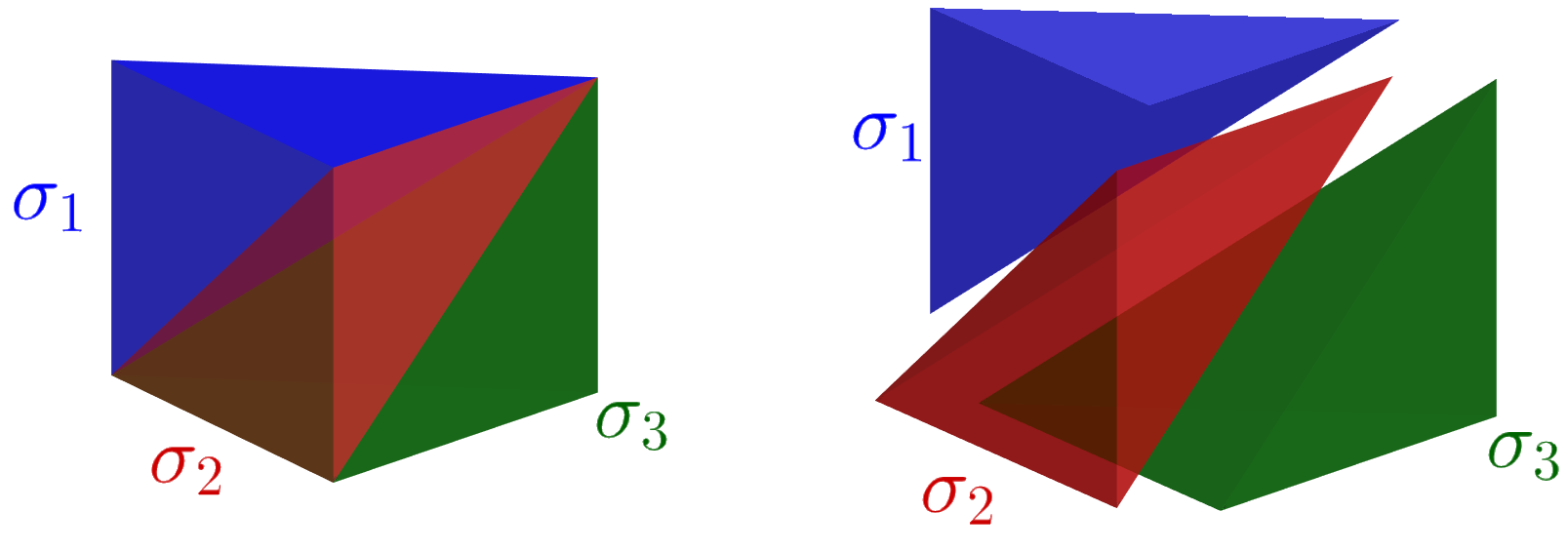}
  \end{center}
\end{figure}

Given a local system $ E \in \Loc_\infty(M)$, Theorem \ref{RH} provides a representation ${\mathcal{I}}(E)$ of $\pi_\infty(M)$. This representation assigns holonomies
 $\mathsf{hol}(\sigma)$ to simplices $\sigma: \Delta_k \rightarrow M$ of arbitrary dimensions. We define the functor $\mathcal{R}: \pi_{\leq2}(M) \rightarrow GL(E,\partial)$ as follows. To a point $x \in M$ we associate the cochain complex $(E_x, \partial)$. To a path, the holonomy with respect to the connection. To the class of a simplex $\sigma: \Delta_2 \rightarrow M$ we associate the holonomy $\hol(\sigma)$ given by ${\mathcal{I}}(E)$. We need to prove that up to exact terms, this does not depend on the representative of the class. Suppose $H: \Delta_2 \times I \rightarrow M$ is a homotopy with fixed ends between simplices $\sigma$ and $\sigma'$. For $i=1,2,3$ we define the simplices $\sigma_i: \Delta_3 \rightarrow M$ by the formulas:
 \[ \sigma_1(t_1,t_2,t_3)=H(t_2,t_3,t_1); \,\, \sigma_2(t_1,t_2,t_3)=H(t_1,t_3,t_2); \,\, \sigma_3(t_1,t_2,t_3)=H(t_1,t_2,t_3).\]
 Using relations (\ref{structure equations}) for the simplices $\sigma_1, \sigma_2, \sigma_3$, together with the fact that holonomies vanish on degenerate simplices (proved in Proposition 3.26 of \cite{abadrham})
 one obtains that $\mathsf{hol}(\sigma)$ coincides with $\mathsf{hol}(\sigma')$ modulo exact terms. This proves that the map $\mathcal{R}$ is well defined. The fact that it defines a functor follows  from (\ref{structure equations}).
\end{proof}

\section{A comparison result}
The first step towards a comparison is a procedure to construct a principal 2-bundle with flat connection out of an $\infty$-local system. At the level of objects the construction is merely the frame bundle construction, which requires only the vector bundle structure of the $\infty$-local system. To perform the construction at the level of morphisms we will rely on the cochain complex structure of the fibers provided by the flat $\mathbb{Z}$-graded connection. It is worth noting that this construction yields a simple class of principal 2-bundles.
By specializing the construction of the general linear 2-groupoid of a cochain complex bundle to the case where $M$ is a point one obtains
a Lie crossed module  $\Gamma(V,\partial)=(H,G,\tau,\alpha)$ associated to a cochain complex $(V,\partial)$. The corresponding differential crossed module
has been computed in \cite{faria22}.

\begin{lemma}\label{Faria2}
  Let $(V,\partial)$ be a cochain complex. The differential Lie crossed module associated to $\Gamma(V,\partial)$ is $\gamma(V,\partial)=(\mathfrak h,\mathfrak g,\tau_*,\alpha_*)$ where:
\begin{enumerate}
    \item $\mathfrak g=\text{gl}^0(V)$ is the vector space of degree zero cochain maps $V\to V$, endowed with the commutator bracket.
    \item $\mathfrak h=\text{gl}^{-1}(V)$, with
    \[
    \text{gl}^{-1}(V)=\frac{\text{End}^{-1}(V)}{[\partial,\text{End}^{-2}(V)]}.
    \]
    This quotient is endowed with the Lie bracket
    \[
    [T,S]=S\partial T-T\partial S+ST\partial-TS\partial;
    \]
    and $[\partial,\text{End}^{-2}(V)]$ is the ideal generated by elements of the form $\partial h-h\partial$.
    \item $\tau_*:\mathfrak h\to \mathfrak g$ is given by $\tau_*(S):=\tau S+S\tau$.
    \item The action $\alpha_*$ is such that $\alpha_*(R)(S)=RS-SR$.
\end{enumerate}
\end{lemma}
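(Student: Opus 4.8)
The plan is to carry out the specialization of the definition of $GL(E,\partial)$ to the case $M = \mathrm{pt}$ and then identify the resulting 2-group with a crossed module explicitly. Recall that for a point, the 2-category $GL(V,\partial)$ has a single object (the complex $(V,\partial)$), its 1-morphisms are the degree-zero cochain maps $V \to V$, and its 2-morphisms are algebraic homotopies between such maps modulo exact maps. First I would isolate the 1-morphisms that are \emph{invertible}, since a Lie 2-group must have invertible 1-morphisms: these form the group $G = \mathrm{GL}^0(V,\partial)$ of invertible degree-zero cochain maps, with Lie algebra $\mathfrak g = \mathrm{gl}^0(V)$, the degree-zero maps commuting with $\partial$, under the commutator. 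This gives item (1).

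Next I would describe the 2-morphisms with fixed source and (in the 2-group) identity-like target. A 2-morphism from $\mathrm{id}_V$ to a cochain map $\phi$ is a degree $-1$ map $S$ with $\phi - \mathrm{id} = \partial S + S\partial$, considered modulo maps of the form $\partial h - h \partial$ with $h \in \mathrm{End}^{-2}(V)$ (this is the "modulo exact maps" clause). So the group $H$ of 2-morphisms based at the identity object is, as a set, $\mathrm{End}^{-1}(V) / [\partial, \mathrm{End}^{-2}(V)]$, and $\tau \colon H \to G$ sends the class of $S$ to $\mathrm{id} + \partial S + S\partial$. Differentiating $\tau$ at the identity gives $\tau_*(S) = \partial S + S \partial = \tau S + S \tau$ (writing $\tau = \partial$ in the notation of the statement), which is item (3). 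To get the bracket on $\mathfrak h$ in item (2), I would compute the group multiplication on $H$ — vertical composition of homotopies, transported so that it is composition of 2-morphisms at the identity object — differentiate it twice, and read off the commutator; the asserted formula $[T,S] = S\partial T - T\partial S + S T \partial - T S \partial$ should drop out after simplifying, using that on the nose $\partial^2 = 0$ and that we work modulo $[\partial, \mathrm{End}^{-2}(V)]$. Finally, for item (4), the action $\alpha \colon G \times H \to H$ is conjugation of a 2-morphism by a 1-morphism (i.e. $g \mapsto$ whiskering), and its differential $\alpha_* \colon \mathfrak g \to \Der(\mathfrak h)$ is the infinitesimal conjugation, which on representatives is $\alpha_*(R)(S) = RS - SR$; one then checks this descends to the quotient defining $\mathfrak h$.

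The last thing to verify is that $(\mathfrak h, \mathfrak g, \tau_*, \alpha_*)$ genuinely satisfies the two differential crossed module axioms, $\alpha_*(\tau_* T)(S) = [T,S]$ and $\tau_*(\alpha_* X (S)) = [X, \tau_* S]$; these are short computations with the formulas above, again modulo $[\partial, \mathrm{End}^{-2}(V)]$, and they also serve as a consistency check on the bracket formula. I expect the main obstacle to be bookkeeping: the bracket on $\mathfrak h$ is not the naive commutator of $\mathrm{End}^{-1}(V)$, so one must be careful to transport the vertical composition of homotopies correctly to the identity object before differentiating, and to verify at each stage that the expressions are well defined modulo $[\partial, \mathrm{End}^{-2}(V)]$. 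Since the differential crossed module in question was already computed in \cite{faria22}, I would in practice either cite that computation directly or reproduce only the key step — the double differentiation of vertical composition — and refer to \cite{faria22} for the routine verifications.
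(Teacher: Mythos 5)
Your proposal is correct, and it actually supplies more than the paper does: the paper offers no proof of this lemma, stating only (just before the statement) that ``the corresponding differential crossed module has been computed in \cite{faria22}'' and following the statement with a one-line remark identifying $\tau(h)$ with $\mathrm{id}+[\partial,h]$. Your route --- specialize the linear $2$-groupoid to a point, take $G$ to be the invertible degree-zero cochain maps and $H$ the homotopy classes of homotopies out of $\mathrm{id}_V$, then differentiate $\tau$, the composition of $2$-morphisms translated to the identity $1$-morphism, and the conjugation action --- is exactly the computation carried out in \cite{faria22}, and it does produce the stated formulas: writing the translated composition as $m(T,S)=S+T-T[\partial,S]$ (with the convention $[\partial,\xi]=s(\xi)-t(\xi)$), its antisymmetrization is precisely $S\partial T-T\partial S+ST\partial-TS\partial$, and well-definedness modulo $[\partial,\mathrm{End}^{-2}(V)]$ follows from $\partial TS\equiv TS\partial$ there. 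Two small caveats worth recording if you write this out: first, for $H$ to be a \emph{group} you must impose on homotopies the same invertibility restriction you imposed on $G$ (namely that $\tau(h)$ be invertible) --- this does not change the Lie algebra $\mathfrak h$, so the lemma is unaffected, but your description of $H$ ``as a set'' omits it; second, the signs in $\tau_*$, in the bracket, and in the crossed-module axioms $\alpha_*(\tau_*T)(S)=[T,S]$ and $\tau_*(\alpha_*(R)(S))=[R,\tau_*S]$ all depend on the orientation convention for $2$-morphisms, and the paper's own statement is only consistent up to such signs (note in particular the typo $\tau S+S\tau$ for $\partial S+S\partial$ in item (3), which you correctly identified), so you should fix one convention at the outset before the consistency check you propose at the end.
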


  When we interpret the crossed module $\Gamma(V,\partial)$ as a Lie 2-group, we have that a pair $(g,h):g\to g'$ is a morphism if $g'=\tau(h)g$. Unraveling this equation leads to $g'-g=[\partial,hg]$, which means that $hg$ is a homotopy from $g'$ to $g$.

\begin{lemma}\label{2frame}
  Given a cochain complex bundle $(E,\partial)$, there is an associated principal 2-bundle $\fr$ which we call the frame 2-bundle.
\end{lemma}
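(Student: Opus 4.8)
The plan is to mimic the cocycle construction from Lemma \ref{cocycle1}, feeding it local data extracted from the cochain complex bundle $(E,\partial)$ and the crossed module $\Gamma(V,\partial)$ identified in Lemma \ref{Faria2}. First I would fix a cover $\{U_i\}_{i\in I}$ of $M$ trivializing $(E,\partial)$ as a cochain complex bundle, with transition functions $g_{ij}\colon U_i\cap U_j\to G=\mathcal{GL}^0(V)$ coming from the local trivializations of Definition (cochain complex bundle). Since these are honest transition functions of a vector bundle that are moreover cochain maps on each fibre, they already satisfy the strict cocycle identity $g_{ik}=g_{ij}g_{jk}$ on triple overlaps. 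Hence one takes $a_{ijk}\equiv 1_H$, the trivial map into $H=\mathcal{GL}^{-1}(V)$. The first cocycle condition $g_{ik}=(\tau\circ a_{ijk})g_{ij}g_{jk}$ then holds because $\tau(1_H)=1_G$, and the second condition $a_{ijl}a_{jkl}=a_{ikl}\,\alpha(g_{kl},a_{ijk})$ reduces to $1=1$ since $\alpha(g,1_H)=1_H$. So $(g,a)=(g_{ij},1)$ is a genuine $\Gamma(V,\partial)$-cocycle.

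Next I would invoke Lemma \ref{cocycle1} directly: applied to this cocycle it produces a principal $\Gamma(V,\partial)$-2-bundle over $M$, which we define to be the frame 2-bundle $\fr$. Concretely, $\text{Obj}(\fr)=\bigsqcup_i U_i\times G$ and $\text{Mor}(\fr)=\bigsqcup_{i,j}(U_i\cap U_j)\times H\times G$, with source, target, composition, projection and $\Gamma$-action as in that proof, specialized to $a_{ijk}=1$ (so e.g. $t(i,j,x,h,g)=(i,x,g_{ij}(x)^{-1}\tau(h)g)$ and composition $(i,j,x,h_2,g_2)\circ(j,k,x,h_1,g_1)=(i,k,x,\alpha(g_{jk}(x),h_2)h_1,g_1)$). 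The weak-equivalence property of $(\mathrm{pr}_1,R)$ is then immediate from Lemma \ref{cocycle1}. I would also remark that, up to equivalence, $\fr$ is independent of the chosen trivializing cover, since two such covers admit a common refinement and the resulting cocycles are cohomologous; this is what justifies speaking of \emph{the} frame 2-bundle, though strictly only a representative is being constructed.

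I expect the only genuine content — as opposed to bookkeeping — to be the verification that the transition maps $g_{ij}$ really do land in $G=\mathcal{GL}^0(V)$, i.e. that the fibrewise trivialization isomorphisms of a cochain complex bundle are compatible with $\partial$, which is exactly the defining requirement in the definition of a cochain complex bundle (and, for $\infty$-local systems, is supplied by Lemma \ref{loccomplex}). Everything downstream is a specialization of Lemma \ref{cocycle1}. The main obstacle, if any, is purely expository: making sure the identifications in Lemma \ref{Faria2} (the quotient description of $\mathfrak h$, the bracket, $\tau_*$, $\alpha_*$) are consistent with the group-level crossed module $\Gamma(V,\partial)$ used here, so that the cocycle construction is being applied to the correct 2-group. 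I would therefore keep the proof short: exhibit the cocycle $(g_{ij},1)$, check the two cocycle conditions, and cite Lemma \ref{cocycle1}.
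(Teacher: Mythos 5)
Your proof is correct, but it takes a genuinely different route from the paper. You build $\fr$ from local data: the vector-bundle transition functions $g_{ij}$ (which land in $G=GL^0(V)$ precisely because the trivializations of a cochain complex bundle are fibrewise cochain isomorphisms), together with $a_{ijk}$ equal to the identity of $H$, and then feed this $\Gamma(V,\partial)$-cocycle into Lemma \ref{cocycle1}. The paper instead gives an intrinsic, cover-free construction: $\text{Obj}(\fr)_x$ is the set of degree-zero cochain isomorphisms $\rho\colon V\to E_x$, morphisms are homotopy classes of homotopies between such frames, the $\Gamma(V,\partial)$-action is written down explicitly on objects and morphisms, and the weak-equivalence condition for $(\mathrm{pr},R)$ is verified by hand (surjectivity from transitivity of the action on fibres, full faithfulness by an explicit computation with homotopies). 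Your cocycle argument is in fact recorded in the paper as Remark \ref{trivialcocycle}, immediately after the lemma, where it is observed that the bundle produced from $(g_{ij},a_{ijk}=0)$ is (equivalent to) the frame 2-bundle. What each approach buys: yours is shorter and reduces everything to the already-established Lemma \ref{cocycle1}, but produces only a cover-dependent representative, as you correctly caution; the paper's intrinsic model is canonical and is the description actually used downstream (e.g.\ in Lemma \ref{linear2group}, where objects of $\fr_x$ must literally be frames $\rho\colon V\to E_x$ so that pushforward along a representation makes sense). One small notational point: the identity element of $H=GL^{-1}(V)$ is written $0$ in the paper's additive description of homotopies, not $1_H$, but this does not affect your argument.
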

\begin{proof}
  Consider a bundle of complexes $(E,\partial)\to M$ and a point $x_0\in M$. We let $(V,\partial)=(E_{x_0},\partial_{x_0})$ and $\Gamma=\Gamma(V,\partial)=(H,G,\tau,\alpha)$. We define a groupoid $\fr$ and an action $\fr\times\Gamma\to\fr$ as follows:
\begin{enumerate}
    \item For every $x\in M$, let
\[\text{Obj}(\fr)_x:=\{\rho:V\to E_x\;\big|\; \rho \text{ is an isomorphism of complexes of degree }0\}.\]
We set $\text{Obj}(\fr)=\bigsqcup_{x\in M}\text{Obj}(\fr)_x$. The projection $\pi:\text{Obj}(\fr)\to M$ is such that $\pi^{-1}(x)=\text{Obj}(\fr)_x$.

    \item Let $\rho,\rho'\in \text{Obj}(\fr)_x$. Define the set
\[\text{Hom}_{\fr}(\rho,\rho'):=\{\xi:V\to E_x\;\big| \; \partial_x\xi+\xi\partial=\rho-\rho'\}/\sim\]
where $\xi\sim\xi'$ if they are homotopic. So morphisms between $\rho$ and $\rho'$ are homotopy classes of homotopies between them. The fact that being homotopic is an equivalence relation provides the identity morphisms (reflexiveness), every homotopy is invertible (symmetry) and a way to compose homotopies (transitiveness). The source and target maps are denoted $s$ and $t$ respectively. Given a homotopy $\xi\in\text{Hom}_{\fr}(\rho,\rho')$ for  $\rho,\rho'\in \text{Obj}(\fr)_x$, we define $\pi(\xi)=\text{id}_x$.
\end{enumerate}
Next we define an action $R:\fr\times\Gamma\to\fr$. On objects it is defined by composition: $R:\text{Obj}(\fr)\times G\to\text{Obj}(\fr)$ is such that $(\rho,g)\mapsto R(\rho,g)=\rho g$. At this level the action is clearly free and transitive along the fibres. Suppose $\xi:\rho\to\rho'$, then we have that $(\xi,(h,g)):(\rho,g)\to(\rho',\tau(h)g)$ is a morphism in $\fr\times\Gamma$. We define $R:\text{Mor}(\fr)\times(H\ltimes G)\to\text{Mor}(\fr)$ as
\[(\xi,(h,g))\mapsto R(\xi,(h,g))=\xi\tau(h)g-s(\xi)hg.\]
The following equation shows that $ R(\xi,(h,g))$ is indeed a morphism $\rho g\to\rho'\tau(h)g$:
\begin{align*}
[\partial,  R(\xi,(h,g))]=&[\partial,\xi\tau(h)g-\rho hg] \\
=&[\partial,\xi]\tau(h)g-\rho[\partial,h]g\\
=& \rho g-\rho'\tau(h)g.
\end{align*}
Let us check that the action just defined makes $\fr$ a principal $\Gamma$-2-bundle, i.e. the functor $(\text{pr},R):\fr\times\Gamma\to\fr\times_M \fr$ is a weak equivalence. Since the action is transitive along the fibres on objects, the functor $(\text{pr},R)$ is surjective. Next we check that the functor is fully faithful: consider objects $(\rho,g)$ and $(\rho',g')$ in $\fr\times\Gamma$. In order to have morphisms $(\rho,g)\to(\rho',g')$ there must be at least one $h\in H$ such that $\tau(h)g=g'$. If this is the case we have a function
\[
(\text{pr},R):\text{Hom}_{\fr\times\Gamma}((\rho,g),(\rho',g'))\to \text{Hom}_{\fr\times_M\fr}((\rho,\rho g),(\rho',\rho'g')).
\]
The function is injective: take morphisms $(\xi,(h,g))$ and $(\xi',(h',g))$, and suppose they map to the same morphism, this is $(\xi,\xi\tau(h)g-s(\xi) hg)=(\xi',\xi'\tau(h')g-s(\xi') h'g)$. Clearly we must have $\xi=\xi'$. The equation from the second component may be rewritten as
\[\xi(\tau(h)-\tau(h'))=s(\xi)(h-h')\]
Next we notice that, since $\tau(h)g=\tau(h')g$ we have $\tau(h)=\tau(h')$, therefore we conclude that $h=h'$ and the function is injective.\\
The function is surjective: take a pair of homotopies $\xi:\rho\to\rho'$ and $\sigma:\rho g\to\rho'g'$. Define $h=\rho^{-1}\xi g'g^{-1}-\rho^{-1}\sigma g^{-1}$. We have
\begin{align*}
\tau(h)g=&[\partial,\rho^{-1}\xi g'-\rho^{-1}\sigma]+g\\
=&\rho^{-1}[\partial,\xi]g'-\rho^{-1}[\partial,\sigma]+g\\
=&\rho^{-1}(\rho-\rho')g'-\rho^{-1}(\rho g-\rho' g')+g=g'.
\end{align*}
Hence $(\xi,(h,g))\in\text{Hom}_{\fr\times\Gamma}((\rho,g),(\rho',g')) $. Furthermore, we have
\[
R(\xi,(h,g))=\xi\tau(h)g-\rho h g=\xi g'-\xi g'+\sigma=\sigma,
\]
proving surjectivity.
\end{proof}
\begin{remark}\label{trivialcocycle}
  As $E$ is a cochain complex bundle, the usual cocycles $g_{ij}$ defined from local trivialisations of the bundle take values in the group $G=GL^0(V)$. Setting $a_{ijk}=0\in H=GL^{-1}(V)$ is easy to check that we have a $\Gamma(V,\partial)$-cocycle $(g,a)$. The principal 2-bundle built from this cocycle using Lemma \ref{cocycle1} is precisely the frame 2-bundle $\fr$, making clear the statement that the frame 2-bundle construction yields only a simple class of principal 2-bundles.
\end{remark}
From Proposition \ref{loccomplex} and Lemma \ref{2frame} we get that every $\infty$-local system has an associated principal 2-bundle. Now we aim to build a flat connection on the frame 2-bundle from an $\infty$-local system.
\begin{theorem}\label{connection}
  Let $(E,D)$ be an $\infty$-local system over a connected smooth manifold $M$. Then there is a flat connection $\Omega_D$ defined on the frame 2-bundle $\fr$.
\end{theorem}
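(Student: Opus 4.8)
The plan is to produce the three component forms $\Omega_D^a$, $\Omega_D^b$, $\Omega_D^c$ of a connection on $\fr$ directly from the local Maurer--Cartan data of the superconnection, and then to verify the equivariance and flatness conditions. Recall from Lemma~\ref{loccomplex} that $(E,D)$ has an underlying cochain complex bundle $(E,\partial)$, and on a trivializing open set $U_i$ the flat $\ZZ$-graded connection reads $D|_{U_i}=d-\omega^{(i)}$ with $\omega^{(i)}=\partial+A_i+B_i+(\text{higher form-degree terms})$, where $A_i=\omega_1^{(i)}\in\Omega^1(U_i,\End^0(E))=\Omega^1(U_i,\mathfrak g)$ and $B_i=\omega_2^{(i)}\in\Omega^2(U_i,\End^{-1}(E))$. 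The key point is that $B_i$ only takes values in $\End^{-1}(V)$, not in the quotient $\mathfrak h=\gl^{-1}(V)$, so I must also check that passing to the quotient is harmless; in fact one takes the class $[B_i]\in\Omega^2(U_i,\mathfrak h)$, and I will need to argue that the cocycle relations descend. The first step, then, is to extract $(A_i,[B_i],\varphi_{ij})$ from $\omega^{(i)}$ and the transition functions $g_{ij}\in GL^0(V)$ (with $a_{ijk}=0$, as in Remark~\ref{trivialcocycle}), and to check that this is a \emph{differential $\Gamma(V,\partial)$-cocycle} in the sense of the relevant definition, i.e. that \eqref{diffcocycle1}, \eqref{diffcocycle2}, \eqref{diffcocycle3} hold.

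The second step is to read off the flatness of the differential cocycle from $D^2=0$. The Maurer--Cartan equation $d\omega^{(i)}=\omega^{(i)}\wedge\omega^{(i)}$ decomposes by form-degree; in form-degree $2$ it gives $dA_i+A_i\wedge A_i=[\partial,B_i]$ in $\End^0$, which is exactly the vanishing of the fake curvature $\mathrm{fcurv}(A_i,B_i)=dA_i+\tfrac12[A_i,A_i]-\tau_*(B_i)$ once one recognizes $\tau_*=[\partial,-]$ from Lemma~\ref{Faria2}. In form-degree $3$ the Maurer--Cartan equation gives $dB_i+A_i\wedge B_i+B_i\wedge A_i+[\partial,\omega_3^{(i)}]=0$ in $\End^{-1}$, so that $dB_i+\alpha_*(A_i)(B_i)=-[\partial,\omega_3^{(i)}]$, which is exact in $\End^{-1}$ and hence zero in $\mathfrak h=\End^{-1}/[\partial,\End^{-2}]$; this is the vanishing of the $3$-curvature. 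Thus $D^2=0$ forces the cocycle to be flat, and Lemma~\ref{cocycle2} then produces a principal $\Gamma(V,\partial)$-$2$-bundle with flat connection from this data. The third step is to identify that bundle with $\fr$: this is Remark~\ref{trivialcocycle} at the level of the underlying bundle, and I will check that the connection forms produced by Lemma~\ref{cocycle2} glue to globally defined forms $\Omega_D=(\Omega_D^a,\Omega_D^b,\Omega_D^c)$ on $\fr$, using the explicit model of $\fr$ from Lemma~\ref{2frame} where a point of $\text{Obj}(\fr)_x$ is an isomorphism of complexes $\rho\colon V\to E_x$. Concretely $\Omega_D^a$ at $\rho$ is $\rho^{-1}\circ(\text{connection 1-form})\circ\rho$ in the trivialization, $\Omega_D^c$ is built from $\rho^{-1}B_i\rho$, and $\Omega_D^b$ on a morphism (a homotopy class $\xi$) is the analogous conjugate of $\varphi_{ij}$; the content is that these are independent of the choice of trivializing chart, which is precisely the cocycle identities \eqref{diffcocycle1}--\eqref{diffcocycle3}.

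I expect the main obstacle to be bookkeeping rather than a conceptual difficulty: matching the sign and normalization conventions of Waldorf's $\Omega(\mathcal P,\gamma)$ and of the differential cocycle equations with the conventions for $\ZZ$-graded connections used here, and in particular handling the passage from $\End^{-1}(V)$ to the quotient $\mathfrak h=\gl^{-1}(V)$ cleanly. Two places require genuine care. First, $\varphi_{ij}$ should be defined so that \eqref{diffcocycle1} holds; since $A_j+\tau_*(\varphi_{ij})$ must equal $\Ad_{g_{ij}}(A_i)-g_{ij}^*\bar\theta$ and the ordinary transformation law for the form-degree-$1$ part of $\omega^{(i)}$ under $g_{ij}$ already gives $A_j=\Ad_{g_{ij}}(A_i)-g_{ij}^*\bar\theta$ modulo $[\partial,-]$, one should set $\varphi_{ij}$ to be (a chosen lift of) the discrepancy, which lives in $\End^{-1}$; one then checks the higher form-degree transformation laws of $\omega^{(i)}$ under $g_{ij}$ give \eqref{diffcocycle2}--\eqref{diffcocycle3}. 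Second, one should verify that the vanishing of the $3$-curvature is genuinely the statement that $dB_i+\alpha_*(A_i)(B_i)$ is a $[\partial,-]$-boundary, which is where $\omega_3^{(i)}$ enters and where connectedness of $M$ (already used in Lemma~\ref{loccomplex}) ensures the crossed module $\Gamma(V,\partial)$ is globally well-defined. Once these identifications are in place, the theorem follows by invoking Lemma~\ref{cocycle2} and Remark~\ref{trivialcocycle}.
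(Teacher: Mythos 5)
Your proposal is correct and follows essentially the same route as the paper: extract $(A_i,B_i)$ from the form-degree decomposition of the local Maurer--Cartan element, read off the vanishing of the fake and $3$-curvatures from the form-degree $2$ and $3$ components of $D^2=0$ (the latter only after projecting to $\mathfrak h=\End^{-1}(V)/[\partial,\End^{-2}(V)]$, which is where $\omega_3$ enters), and feed the resulting flat differential cocycle into Lemma~\ref{cocycle2} over the cocycle of Remark~\ref{trivialcocycle}. The one place where you are more cautious than necessary is $\varphi_{ij}$: since the trivializations are trivializations of cochain complex bundles, the transition functions $g_{ij}$ commute with $\partial$ and the form-degree-$1$ gauge transformation law holds exactly in $\End^0(V)$ (not merely modulo $[\partial,-]$), so one simply sets $\varphi_{ij}=0$, as the paper does, and conditions \eqref{diffcocycle1}--\eqref{diffcocycle3} reduce to the exact gauge-equivalence identities in form degrees $1$ and $2$.
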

\begin{proof}
  First let us fix a point $x_0\in M$ and call $(V,\partial)=(E_{x_0},\partial_{x_0})$, the fiber over $x_0$ with its cochain complex structure. We will define a differential $\Gamma(V,\partial)$-cocycle from the $\ZZ$-graded connection $D$. Let $\{U_i\}_{i\in I}$ be an open covering of $M$ such that the vector bundle $E$ trivialises as a cochain complex bundle over each $U_i$ with fiber $(V,\partial)$. Let $(g,a)$ be $\Gamma(V,\partial)$-cocycle defined in remark \ref{trivialcocycle}. For differential components of the cocycle consider the local form of $D$ over $U_i$ which is $D=d+\sum_{k\geq0}\omega^k_i$ where $\beta^0_i=\partial\in\Omega^0(U_i,\End^1(V))$ and $\beta_i^k\in\Omega^k(U_i,\End^{1-k}(V))$. Let $\text{pr}:\End^{-1}(V)\to\text{End}^{-1}(V)/[\partial,\text{End}^{-2}(V)]$ be the natural projection to the quotient, then we define
  \[
  A_i=\omega^1_i\in\Omega^1(U_i,\End^0(V)),\quad B_i=-\text{pr}\circ\omega^2_i\in\Omega^2(U_i,\mathfrak h).
  \]
  The Maurer-Cartan equation for $\omega_i=\sum_{k\geq0}\omega^k_i$ has the following implications:
  \begin{itemize}
    \item The term of degree one of the equation is $\partial \omega^1_i=\omega^1_i\partial$, so $\omega^1_i$ actually takes values in the algebra of cochain maps and $A_i=\omega^1_i\in\Omega^1(U_i,\mathfrak g)$.
    \item The term of degree two is $\partial \omega^2_i+\omega^2_i\partial=d\omega^1_i+\omega^1_i\wedge \omega^1_i$. Projecting to the quotient and rewriting appropriately yields $\tau_*(B_i)=dA_i+[A_i,A_i]/2$, which means that the fake curvature of the $\Gamma(V,\partial)$-cocycle vanishes.
    \item The term of degree three is $d\omega^2_i+\omega^1_i\omega^2_i-\omega^2_i\omega^1_i=\partial\omega^3_i-\omega^3_i\partial$. Projecting to the quotient and rewriting we get $dB_i+\alpha_*(A_i)(B_i)=0$, thus the 3-curvature also vanishes.
  \end{itemize}
  The final component of the cocycle is the form $\varphi_{ij}\in\Omega^1(U_i\cap U_j,\mathfrak h)$, this one we take to be zero $\varphi_{ij}=0$. Now suppose that $U_i\cap U_j$ is non-empty, then the forms $\omega_i$ and $\omega_j$ are gauge equivalent via the cocycle $g_{ij}$, this is $\omega_i=g_{ij}\omega_j g_{ji}+g_{ij}dg_{ji}$. After projecting onto the quotient, the terms of degree one and two of the gauge equivalence equation may be written respectively as
  \[
  A_i=g_{ij}A_jg_{ji}+g_{ij}dg_{ji},\quad\text{and}\quad B_i=g_{ij}B_jg_{ji}.
  \]
  The previous equations are the conditions (\ref{diffcocycle1}) and (\ref{diffcocycle2}) required of a differential cocycle. Condition (\ref{diffcocycle3}) is satisfied trivially. The proof is completed by applying Lemma \ref{cocycle2}.
\end{proof}

 Fix a cochain complex bundle $(E,\partial)$. Let $\Loc_\infty^{(E,\partial)}(M)$ denote the category of $\infty$-local systems over $M$ such that their cochain complex bundle is isomorphic to $(E,\partial)$. We have defined a map
 \begin{align*}
 K:\Loc_\infty^{(E,\partial)}(M)&\to 2\text{-}\mathcal Bun_{\fr}^f(M).\\
 (E,D)&\mapsto (\fr,\Omega_D)
 \end{align*}

\begin{lemma}\label{linear2group}
  Let $(E,\partial)$ be a cochain complex bundle over $M$ and fix a basepoint $\ast \in M$. Then there is a map $\mathcal{K}:\text{Rep}_{\leq2}(M,(E,\partial))\to\text{Rep}_{\leq2}(M, \fr)$.
\end{lemma}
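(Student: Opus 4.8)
The plan is to factor $\mathcal K$ through a homomorphism of bicategories relating the two target $2$-categories. Write $(V,\partial):=(E_\ast,\partial_\ast)$ for the fibre over the fixed basepoint, $\Gamma:=\Gamma(V,\partial)$ for the associated Lie crossed module of Lemma~\ref{Faria2}, and $\fr$ for the frame $2$-bundle of Lemma~\ref{2frame} built with $x_0=\ast$. I would construct a $2$-functor $\Phi\colon GL(E,\partial)\to\Gamma\text{-Tor}(\fr)$ and then set $\mathcal K(\mathcal R):=\Phi\circ\mathcal R$. Since a linear representation is, by definition, a $2$-functor $\pi_{\leq 2}(M)\to GL(E,\partial)$, and the composite of a $2$-functor with $\Phi$ is again a $2$-functor, this yields an element of $\text{Rep}_{\leq 2}(M,\fr)$, and $\mathcal R\mapsto\Phi\circ\mathcal R$ is the desired map. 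The conceptual content is minimal: by Lemma~\ref{2frame} the fibre groupoid $\fr_x$ is precisely the groupoid whose objects are the degree-zero isomorphisms $V\to E_x$ and whose morphisms are homotopy classes of homotopies, so $GL(E,\partial)$ and $\Gamma\text{-Tor}(\fr)$ are two packagings of the same data.

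Concretely, $\Phi$ sends the object $(E_x,\partial_x)$ to the $\Gamma$-torsor $\fr_x$. On an isomorphism of complexes $f\colon(E_x,\partial_x)\to(E_y,\partial_y)$ — the case we need, since the $1$-morphisms of $GL(E,\partial)$ occurring as parallel transports of an $\infty$-local system are isomorphisms — $\Phi$ returns the $\Gamma$-equivariant anafunctor associated, via the remark on anafunctors induced by functors, to the $\Gamma$-equivariant functor $F_f\colon\fr_x\to\fr_y$, $\rho\mapsto f\circ\rho$ and $[\xi]\mapsto[f\circ\xi]$; well-definedness and equivariance follow from $f$ being a chain map, since then $\partial_y(f\xi)+(f\xi)\partial=f(\partial_x\xi+\xi\partial)=f\rho-f\rho'$ and $F_f(R(\xi,(h,g)))=R(F_f\xi,(h,g))$. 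On a $2$-morphism, that is a homotopy $\eta\colon f\Rightarrow f'$ taken modulo exact maps, $\Phi$ returns the $\Gamma$-equivariant transformation with component $[\eta\circ\rho]\colon f\rho\to f'\rho$ at $\rho$; using $\partial_x\rho=\rho\partial$ one checks that $[\eta\rho]$ is indeed a morphism in $\fr_y$, is independent of the representative of $\eta$ modulo exact maps, and that the naturality squares close modulo exact maps. If one wants $\Phi$ on all of $GL(E,\partial)$, a general $1$-morphism is only a homotopy equivalence of complexes; one then produces a $\Gamma$-equivariant bibundle $\fr_x\to\fr_y$ in the same spirit after conjugating by frames, but this is not needed for Theorem~A.

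It then remains to upgrade this assignment on cells to a genuine homomorphism of bicategories: composition in $GL(E,\partial)$ is strict, whereas composition of anafunctors is only weakly associative, so one must supply the compositor and unitor $2$-isomorphisms $\Phi(g)\circ\Phi(f)\xrightarrow{\sim}\Phi(g\circ f)$ and $\id_{\fr_x}\xrightarrow{\sim}\Phi(\id)$ — these are the canonical comparisons between the anafunctor built from $F_g\circ F_f$ and the one built from $F_{g\circ f}$ — and to verify the coherence axioms together with compatibility with the horizontal and vertical compositions of $2$-morphisms. I expect this last step to be the only real work, and it is essentially bookkeeping: one must carry well-definedness through the two quotients (by exact maps in $GL(E,\partial)$, by homotopies in the morphisms of $\fr$) and assemble the coherence data, so that $\mathcal K(\mathcal R)=\Phi\circ\mathcal R$ is checked to be a $2$-functor and not merely an assignment on $0$-, $1$- and $2$-cells. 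There is no conceptual obstacle, since the frame $2$-bundle was defined precisely so that its torsor $2$-category reproduces $GL(E,\partial)$.
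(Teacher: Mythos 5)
Your construction is essentially the paper's: the paper also defines $\mathcal K\Rcal$ on objects by $E_x\mapsto\fr_x$, on paths by the left-composition pushforward $\Rcal(\rho)_*$ (promoted to an anafunctor via the smooth-functor-to-anafunctor remark), and on $2$-morphisms by $\Rcal(\xi)_*$, verifying the same homotopy-commutativity identity $[\Rcal(\xi)_*(\mathcal A),\partial]=(\Rcal(\xi)\alpha+\Rcal(\rho')\mathcal A)-(\Rcal(\rho)\mathcal A+\Rcal(\xi)\beta)$; your factorization through a $2$-functor $\Phi\colon GL(E,\partial)\to\Gamma\text{-Tor}(\fr)$ is only a cosmetic repackaging. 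If anything you are more careful than the paper about the compositor/unitor coherence for anafunctor composition and about $1$-morphisms that are not isomorphisms, both of which the paper passes over.
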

\begin{proof}
  Suppose $\Rcal:\pi_{\leq2}(M)\to\gl$ is a linear representation, we will define the functor
\[
\mathcal{K}\Rcal:\pi_{\leq2}(M)\to\Gamma\text{-Tor}(\fr).
\]
For every point $x\in M$, the fiber $\fr_x$ is a $\Gamma$-torsor, so we set $\mathcal{K}\Rcal(E_x)=\fr_x$.\\

If $\rho:x\to y$ is a path connecting $x$ to $y$, then $\Rcal(\rho):E_x\to E_y$ is a morphism of complexes. The push forward $\Rcal(\rho)_*:\fr_x\to\fr_y$ given by composition to the left with $\Rcal(\rho)$ is a $\Gamma$-equivariant functor, hence it induces a $\Gamma$-equivariant anafunctor and we define $\mathcal{K}\Rcal(\rho)=\Rcal(\rho)_*$. It can be checked easily that the functor preserves composition of paths.\\

Finally take paths $\rho,\rho':x\to y$ and let $\xi:\rho\to\rho'$ be a homotopy between them. Then $\Rcal(\xi):\Rcal(\rho)\to\Rcal(\rho')$ is an (algebraic) homotopy. The homotopy $\Rcal(\xi)$ induces a natural transformation between the functors $\Rcal(\rho)_*,\Rcal(\rho')_*:\fr_x\to\fr_y$ as follows: let $\alpha:V\to E_x$ be an object in $\fr_x$, then $\Rcal(\xi)_*(\alpha)=\Rcal(\xi)\circ\alpha:\Rcal(\rho)_*(\alpha)\to\Rcal(\rho')_*(\alpha)$ is a homotopy. Let us check that the push forward $\Rcal(\xi)_*$ is indeed a natural transformation: consider a morphism $\mathcal A:\alpha\to\beta$ in $\fr_x$, then we have the following diagram
\[
\begin{CD}
\Rcal(\rho)_*(\alpha)@>\Rcal(\rho)_*(\mathcal A)>>\Rcal(\rho)_*(\beta)\\
@V\Rcal(\xi)_*(\alpha)VV@VV\Rcal(\xi)_*(\beta)V\\
\Rcal(\rho')_*(\alpha)@>>\Rcal(\rho')_*(\mathcal A)> \Rcal(\rho')_*(\beta)
\end{CD}
\]
A simple computation shows that
\[
[\Rcal(\xi)_*(\mathcal A), \partial]=(\Rcal(\xi)\circ\alpha+\Rcal(\rho')\circ\mathcal A)-(\Rcal(\rho)\circ\mathcal A+\Rcal(\xi)\circ\beta),
\]
which shows that the diagram is commutative up to homotopy. Once again, since the natural transformation is defined by composition to the left, we can see that it is $\Gamma$-equivariant. We define $\mathcal{K}\Rcal(\xi)=\Rcal(\xi)_*$. The fact that $\mathcal{K}\Rcal$ preserves the horizontal and vertical composition of 2-morphisms derives from the same fact for $\Rcal$ and the properties of the push forward.
\end{proof}
 \begin{lemma}
    If $h:[0,1]\to H$ is a solution to equation (\ref{soe}), then
    \[
    h(1)=\left(\int_0^1\int_0^1\alpha(g_{\Sigma_s}(t)^{-1})_*\left(B\left(\frac{\partial}{\partial t}\Sigma_s(t),\frac{\partial}{\partial s}\Sigma_s(t)\right)\right)\text{Hol}_{\Sigma_s}^{-1}dtds\right)\text{Hol}_{\Sigma_1}.
    \]
  \end{lemma}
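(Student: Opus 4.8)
The plan is to integrate the left-invariant equation (\ref{soe}) by a variation-of-parameters argument in which the one-dimensional holonomies $\text{Hol}_{\Sigma_s}$ of the paths $t\mapsto\Sigma_s(t)$ serve as an integrating factor. All computations take place inside the associative algebra $\End(V)$, where $(V,\partial)=(E_{x_0},\partial_{x_0})$. Recall from Lemma \ref{Faria2} and the paragraph preceding Lemma \ref{2frame} that $H$ is realised as $\End^{-1}(V)/[\partial,\End^{-2}(V)]$ with $\tau(h)=\id+[\partial,h]$ and with the group law coming from the horizontal composition of homotopies; consequently $\id_H=0$ and $(L_{h})_{*}$ is left multiplication by $\tau(h)$. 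Writing
\[
y(s):=\int_0^1\alpha\big(g_{\Sigma_s}(t)^{-1}\big)_{*}\Big(B\big(\tfrac{\partial}{\partial t}\Sigma_s(t),\tfrac{\partial}{\partial s}\Sigma_s(t)\big)\Big)\,dt ,
\]
equation (\ref{soe}) becomes $h'(s)=\tau\big(h(s)\big)\,y(s)$ with $h(0)=0$, so the claimed identity amounts to a closed formula for the solution of this equation at $s=1$.

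First I would prove a ``non-abelian Stokes'' identity for the family $\text{Hol}_{\Sigma_s}=g_{\Sigma_s}(1)$ defined by (\ref{poe}), namely
\[
\frac{d}{ds}\,\text{Hol}_{\Sigma_s}=\text{Hol}_{\Sigma_s}\cdot\tau_{*}\big(y(s)\big).
\]
This follows by differentiating (\ref{poe}) with respect to $s$, solving the resulting inhomogeneous linear equation for $\partial_s g_{\Sigma_s}(t)$ by variation of constants, integrating in $t$, and noting that the boundary terms $A\big(\tfrac{\partial}{\partial s}\Sigma_s(t)\big)$ vanish at $t=0,1$ because $\Sigma$ is a bigon with fixed endpoints. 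One then substitutes the vanishing of the fake curvature $dA+\tfrac12[A,A]-\tau_{*}(B)$ (part of the flatness of the connection, cf.\ (\ref{fcurv}) and Theorem \ref{connection}), and uses the $G$-equivariance of $\tau_{*}$, i.e.\ $\int_0^1 g_{\Sigma_s}(t)^{-1}\tau_{*}\big(B(\ldots)\big)g_{\Sigma_s}(t)\,dt=\tau_{*}\big(y(s)\big)$.

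Next I would differentiate the $\End(V)$-valued curve $c(s):=h(s)\circ\text{Hol}_{\Sigma_s}^{-1}$. Combining $h'(s)=\tau(h(s))y(s)$, the identity above (so that $\tfrac{d}{ds}\text{Hol}_{\Sigma_s}^{-1}=-\tau_{*}(y(s))\,\text{Hol}_{\Sigma_s}^{-1}$), and the elementary relation $\tau(h)\,y-h\,\tau_{*}(y)=y+[\partial,hy]$, one gets
\[
c'(s)=y(s)\,\text{Hol}_{\Sigma_s}^{-1}+\big[\partial,\;h(s)y(s)\,\text{Hol}_{\Sigma_s}^{-1}\big],
\]
whose second summand is exact (note $h(s)y(s)\text{Hol}_{\Sigma_s}^{-1}\in\End^{-2}(V)$ and $\text{Hol}_{\Sigma_s}^{-1}$ is a cochain map), hence $c'(s)=y(s)\text{Hol}_{\Sigma_s}^{-1}$ as a curve in $H=\End^{-1}(V)/[\partial,\End^{-2}(V)]$. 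Integrating from $0$ to $1$, using $c(0)=h(0)\circ\text{Hol}_{\Sigma_0}^{-1}=0$, and composing on the right with $\text{Hol}_{\Sigma_1}$ yields
\[
h(1)=\Big(\int_0^1\!\!\int_0^1\alpha\big(g_{\Sigma_s}(t)^{-1}\big)_{*}\Big(B\big(\tfrac{\partial}{\partial t}\Sigma_s(t),\tfrac{\partial}{\partial s}\Sigma_s(t)\big)\Big)\,\text{Hol}_{\Sigma_s}^{-1}\,dt\,ds\Big)\text{Hol}_{\Sigma_1},
\]
which is the assertion, the factor $\text{Hol}_{\Sigma_s}^{-1}$ being independent of $t$.

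The step I expect to require the most care is the non-abelian Stokes identity: one must keep the left/right placement of the holonomy factors consistent with Waldorf's convention for $(R_g)_{*}$ in (\ref{poe}) and with the sign of the fake curvature in (\ref{fcurv}), and one must check that $\tau_{*}$ is well defined on the quotient $\mathfrak h$ and intertwines the relevant adjoint actions. The relation $\tau(h)y-h\tau_{*}(y)=y+[\partial,hy]$ used in the last step is precisely the algebraic shadow of the crossed-module relations of Lemma \ref{Faria2} that forces the correction term to be exact; once the conventions are pinned down, the remaining manipulations are routine.
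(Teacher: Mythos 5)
Your proposal is correct and follows essentially the same route as the paper's proof: you use $\mathrm{Hol}_{\Sigma_s}^{-1}$ as an integrating factor, rewrite $(L_{h})_{*}$ via the crossed-module identity so that the correction term is $h(s)\tau_{*}(y(s))$ modulo exact maps, identify $\tau_{*}(y(s))$ with $\mathrm{Hol}_{\Sigma_s}^{-1}\tfrac{d}{ds}\mathrm{Hol}_{\Sigma_s}$, and integrate $\tfrac{d}{ds}\bigl(h(s)\mathrm{Hol}_{\Sigma_s}^{-1}\bigr)=y(s)\mathrm{Hol}_{\Sigma_s}^{-1}$. The only difference is one of detail: you sketch a derivation of the non-abelian Stokes identity from the vanishing of the fake curvature, whereas the paper asserts that identity without proof.
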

  \begin{proof}
    Equation (\ref{soe}) may be rewritten as
    \begin{align*}
      \dfrac{dh(s)}{ds}=&\left(L_{h(s)}\right)_{*}\left(\int_0 ^1 \alpha\left(g_{\Sigma_s(t)^{-1}}\right)_{*}\left(B\left(\dfrac{\partial}{\partial t} \Sigma_s(t), \dfrac{\partial}{\partial s} \Sigma_s(t) \right)\right)dt\right)\\
      =&\int_0 ^1 \alpha\left(g_{\Sigma_s(t)^{-1}}\right)_{*}\left(B\left(\dfrac{\partial}{\partial t} \Sigma_s(t), \dfrac{\partial}{\partial s} \Sigma_s(t) \right)\right)dt\\
      &+h(s)\left(\int_0^1\text{Ad}_{g_{\Sigma_s}(t)^{-1}}\tau_*(B)\left(\dfrac{\partial}{\partial t} \Sigma_s(t), \dfrac{\partial}{\partial s} \Sigma_s(t) \right)dt\right)\\
      =&\int_0 ^1 \alpha\left(g_{\Sigma_s(t)^{-1}}\right)_{*}\left(B\left(\dfrac{\partial}{\partial t} \Sigma_s(t), \dfrac{\partial}{\partial s} \Sigma_s(t) \right)\right)dt+h(s)\text{Hol}_{\Sigma_s}^{-1}\frac{d\text{Hol}_{\Sigma_s}}{ds}.
    \end{align*}
    Therefore we get the following equation
    \[
    \frac{d\left(h(s)\text{Hol}_{\Sigma_s}^{-1}\right)}{ds}=\left(\int_0 ^1 \alpha\left(g_{\Sigma_s(t)^{-1}}\right)_{*}\left(B\left(\dfrac{\partial}{\partial t} \Sigma_s(t), \dfrac{\partial}{\partial s} \Sigma_s(t) \right)\right)dt\right)\text{Hol}_{\Sigma_s}^{-1}.
    \]
    Integrating the previous equation leads to the desired result.
  \end{proof}
  \begin{lemma}
    Define
    \[
    a_s(t):=A_{\Sigma(s,t)}\left(\frac{\partial\Sigma}{\partial t}\right)\quad \text{and}\quad b_s(t):=B_{\Sigma(s,t)}\left(\dfrac{\partial}{\partial t} \Sigma_s(t), \dfrac{\partial}{\partial s} \Sigma_s(t)\right).
    \]
    Then the integral
    \begin{equation}\label{last}
      \int_0^1\int_0^1\alpha(g_{\Sigma_s}(t)^{-1})_*\left(B\left(\frac{\partial}{\partial t}\Sigma_s(t),\frac{\partial}{\partial s}\Sigma_s(t)\right)\right)\text{Hol}_{\Sigma_s}^{-1}dtds
    \end{equation}
    is equal the iterated integral
    \[
    \sum_{m,n\geq0}\int_{\Delta_{m+n+1}\times I}a_s(1-t_1)\cdots a_s(1-t_m)b_s(1-t_{m+1})a_s(1-t_{m+2})\cdots a_s(1-t_{m+n+1})dt_1\cdots dt_{m+n+1}ds.
    \]
  \end{lemma}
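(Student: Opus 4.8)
The plan is to expand every group‑valued factor in the integrand of (\ref{last}) as a Picard (iterated‑integral) series in $a_s$, multiply the series out, and recognise the result as the claimed sum over the simplices $\Delta_{m+n+1}$. First I would unravel the integrand: by Lemma \ref{Faria2} the action $\alpha$ of the $2$-group $\Gamma(V,\partial)$ on $H=GL^{-1}(V)$ is conjugation inside $GL(V)$, so on $\mathfrak h$ one has $\alpha(g^{-1})_*(Y)=g^{-1}Yg$, the class of the conjugate; this descends to the quotient $\mathfrak h=\mathrm{End}^{-1}(V)/[\partial,\mathrm{End}^{-2}(V)]$ because every $g\in G=GL^0(V)$ commutes with $\partial$. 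Hence the integrand of (\ref{last}) equals $g_{\Sigma_s}(t)^{-1}\,b_s(t)\,\Phi_s(t)$, where $\Phi_s(t):=g_{\Sigma_s}(t)\,\text{Hol}_{\Sigma_s}^{-1}$ and $\text{Hol}_{\Sigma_s}=g_{\Sigma_s}(1)$ is the $G$-component of the parallel transport along $\Sigma_s$.

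Next I would expand the two group‑valued factors. Writing (\ref{poe}) in the matrix group $G$ as $\dot g_{\Sigma_s}(t)=-a_s(t)g_{\Sigma_s}(t)$ with $g_{\Sigma_s}(0)=\id$, its inverse solves $\tfrac{d}{dt}g_{\Sigma_s}(t)^{-1}=g_{\Sigma_s}(t)^{-1}a_s(t)$, so Picard iteration gives, with no sign,
\[
g_{\Sigma_s}(t)^{-1}=\sum_{m\ge 0}\ \int_{0\le\sigma_1\le\cdots\le\sigma_m\le t}a_s(\sigma_1)\cdots a_s(\sigma_m)\,d\sigma .
\]
Similarly $\Phi_s$ solves the same equation $\dot\Phi_s=-a_s\Phi_s$ but with the \emph{terminal} value $\Phi_s(1)=\id$, hence $\Phi_s(t)=\id+\int_t^1 a_s(\rho)\Phi_s(\rho)\,d\rho$, which iterates to
\[
\Phi_s(t)=\sum_{n\ge 0}\ \int_{t\le\rho_1\le\cdots\le\rho_n\le 1}a_s(\rho_1)\cdots a_s(\rho_n)\,d\rho
\]
(the signs carried by the factors $g_{\Sigma_s}(t)$ and $g_{\Sigma_s}(1)^{-1}$ cancel). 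All these series converge absolutely since $V$ is finite dimensional and the forms are bounded on compacta, so the term‑by‑term manipulations below are legitimate.

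Finally I would multiply out and reparametrise. Inserting the two expansions and performing the $\int_0^1\!dt$, the $(m,n)$-summand becomes the integral of $a_s(\sigma_1)\cdots a_s(\sigma_m)\,b_s(t)\,a_s(\rho_1)\cdots a_s(\rho_n)$ over the single ordered region $\{0\le\sigma_1\le\cdots\le\sigma_m\le t\le\rho_1\le\cdots\le\rho_n\le 1\}$. The measure‑preserving substitution $\sigma_i=1-t_i$ for $i\le m$, $t=1-t_{m+1}$, and $\rho_j=1-t_{m+1+j}$ for $j\le n$ carries this region onto $\Delta_{m+n+1}=\{1\ge t_1\ge\cdots\ge t_{m+n+1}\ge 0\}$ and turns the integrand into $a_s(1-t_1)\cdots a_s(1-t_m)\,b_s(1-t_{m+1})\,a_s(1-t_{m+2})\cdots a_s(1-t_{m+n+1})$. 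Summing over $m,n\ge 0$ and integrating in $s$ then gives exactly the asserted iterated integral.

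The main obstacle is really the bookkeeping in these last two steps: pinning down the conjugation formula for $\alpha(g^{-1})_*$ on the quotient $\mathfrak h$ directly from Lemma \ref{Faria2}, and checking that the concatenation of the ``$\sigma$-simplex lying below $t$'' with the ``$\rho$-simplex lying above $t$'' is precisely $\Delta_{m+n+1}$, with $b_s$ occupying the middle slot $t_{m+1}$. The analytic points---absolute convergence and interchanging sum with integral---are routine because the fibre is finite dimensional.
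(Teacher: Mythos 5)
Your proposal is correct and follows essentially the same route as the paper's proof: expand the solutions of the transport ODE (\ref{poe}) as Picard/iterated-integral series, substitute into (\ref{last}), and reparametrise via $t\mapsto 1-t$ to land on $\Delta_{m+n+1}$. You in fact supply details the paper leaves implicit — identifying $\alpha(g^{-1})_*$ as conjugation descending to the quotient $\mathfrak h$, and the explicit concatenation of the ordered regions below and above $t$ — whereas the paper packages the reversal into the series for $g_{\bar\gamma}$ and simply states that substitution yields the result.
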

  \begin{proof}
    A solution to equation (\ref{poe}) in terms of iterated integrals is
    \[
    g_\gamma(t)=\text{id}+\sum_{n\geq1}\int_0^t\int_0^{t_1}\cdots\int_0^{t_{n-1}}a(t_1)\cdots a(t_n)dt_1\cdots dt_n.
    \]
    Similarly we have
    \[
    g_\gamma(t)^{-1}=\text{id}+\sum_{n\geq1}(-1)^n\int_0^t\int_0^{t_1}\cdots\int_0^{t_{n-1}}a(t_1)\cdots a(t_n)dt_1\cdots dt_n.
    \]
    and
    \[
    g_{\bar\gamma}(t)=\text{id}+\sum_{n\geq1}\int_0^t\int_0^{t_1}\cdots\int_0^{t_{n-1}}a(1-t_1)\cdots a(1-t_n)dt_1\cdots dt_n.
    \]
    replacing the previous expressions in (\ref{last}) yields the result.
  \end{proof}
\begin{lemma}\label{hol}
  The integral
  \[
    \sum_{m,n\geq0}\int_{\Delta_{m+n+1}\times I}a_s(1-t_1)\cdots a_s(1-t_m)b_s(1-t_{m+1})a_s(1-t_{m+2})\cdots a_s(1-t_{m+n+1})dt_1\cdots dt_{m+n+1}ds.
    \]
    is equal to $\mathsf{hol}(\sigma)$.
    \end{lemma}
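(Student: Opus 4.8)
The plan is to match the iterated-integral expression in the statement against the explicit formula for $\hol(\sigma)$ recorded just before Lemma \ref{map2}, namely
\[
\hol(\sigma)=\sum_{n\geq0}(-1)^n\int_{\Delta_n\times I^{k-1}}(\Theta_k)_{(n)}^*\bigl(p_1^*(\sigma^*\omega)\wedge\cdots\wedge p_n^*(\sigma^*\omega)\bigr),
\]
specialized to $k=2$. The first step is to unravel what $(\Theta_2)_{(n)}^*$ does to the pulled-back product. By Remark \ref{Theta2} the map $\Theta_2$ folds the square $I^2$ onto $\Delta_2$ so that the horizontal segment at height $s$ traces out the piecewise-linear path $[v_0,v_1]*[v_1,v_2]$ reparametrized, i.e.\ exactly the path $\Sigma_s$. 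Thus restricting $\sigma^*\omega$ along $\Theta_2$ and using $\Sigma=\sigma\circ\Theta_2$ decomposes $\Sigma^*\omega$ into its $ds$-part and $dt$-part: the $dt$-component of $\sigma^*(\omega^1)$ becomes $a_s(t)\,dt$ and the $ds\wedge dt$-component of $\sigma^*(\omega^2)$ becomes $b_s(t)\,ds\wedge dt$ (here one uses that $\omega^0=\partial$ contributes nothing transverse and $\omega^{\geq3}$ cannot contribute since we only have a $2$-simplex; more precisely, after pullback to $I^2$ only form-degrees $\leq 2$ survive, and the $ds$-direction can be hit at most once).

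The second step is the combinatorial bookkeeping. On $\Delta_n\times I$ with coordinates $(t_1\geq\cdots\geq t_n)$ and $s$, the pullback $(\Theta_2)_{(n)}^*(p_1^*\sigma^*\omega\wedge\cdots\wedge p_n^*\sigma^*\omega)$ expands as a sum over which factors contribute their $a$-part (a $dt_i$) and which contribute the $b$-part (the $ds\wedge dt_i$). Since $ds$ appears once overall, exactly one factor — say the $(m+1)$-st in the natural order — is of $b$-type and the rest are of $a$-type; this produces the $\sum_{m,n\geq0}$ with the $b$-term sitting in position $m+1$, matching the integrand of the statement. The substitution $t_i\mapsto 1-t_i$ accounts for the orientation reversal built into $\Theta_2$ (the segment runs from $v_2$ end to $v_0$ end), which is where the $1-t_i$ arguments come from; and the sign $(-1)^n$ in the holonomy formula is absorbed because the previous two lemmas already carry alternating signs through $g_\gamma(t)^{-1}$ and the reparametrization $g_{\bar\gamma}$, so after the manipulations of those lemmas the signs cancel and one is left with the unsigned iterated integral displayed here. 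I would verify this sign/orientation matching carefully on the domain $\Delta_{m+n+1}\times I\cong\{1\geq t_1\geq\cdots\geq t_{m+n+1}\geq 0\}\times I$, checking it agrees with the image of $\Delta_n\times I^{1}$ under $(\Theta_2)_{(n)}$ after the change of variables.

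The main obstacle is precisely this last point: making the identification of domains and orientations rigorous, i.e.\ showing that integrating $(\Theta_2)_{(n)}^*(\cdots)$ over $\Delta_n\times I$ equals integrating the reparametrized iterated integrand over $\Delta_{m+n+1}\times I$ with the correct sign, term by term in $m,n$ with $m+n+1 = n_{\text{old}}$. This requires understanding the piecewise-linear structure of $\Theta_2$ well enough to see how the simplex $\Delta_n$ in the time variables, together with the cube coordinate, reassembles into the ordered simplex $\Delta_{m+n+1}$ once one records where the single $b$-factor lands. I expect this to reduce to a Fubini-type argument once the piecewise-linear pieces of $\Theta_2$ are written out explicitly as in Remark \ref{Theta2}, but the sign conventions (orientation of $\Delta_k$ as defined in this paper, the $(-1)^n$, and the reversal $t\mapsto 1-t$) are the delicate part and deserve an explicit check rather than an appeal to general nonsense. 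Once the term-by-term equality of integrals is established, summing over $m,n\geq 0$ yields $\hol(\sigma)$ and completes the proof.
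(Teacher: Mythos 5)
Your proposal is correct and follows essentially the same route as the paper's proof: both rest on identifying $a_s$ and $b_s$ with pullbacks of $\omega_1$ and $-\mathrm{pr}(\omega_2)$ along $\Sigma=\sigma\circ\Theta_2$ via the reparametrization $t\mapsto 1-t$, and on the degree argument that the $ds$-direction is hit exactly once, so the sum over the position $m+1$ of the single $\omega_2$-factor reassembles $\sum_N(-1)^N\int_{\Delta_N\times I}(\Theta_2)_{(N)}^*(p_1^*\sigma^*\omega\wedge\cdots\wedge p_N^*\sigma^*\omega)$. The only difference is that you expand $\hol(\sigma)$ into the bigraded sum while the paper collapses the bigraded sum into $\hol(\sigma)$, and your candid flagging of the sign and orientation bookkeeping is, if anything, more careful than the paper's one-line assertion that the signs work out.
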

\begin{proof}
The sum can be written as
    \begin{equation}\label{last2}
    \sum_{m,n\geq0}(-1)^{m+n}\int_{\Delta_{m+n+1}\times I}\mu^*_{(m+n+1)}(p_1^*\Sigma^*\omega_1\cdots p_m^*\Sigma^*\omega_1 p_{m+1}^*\Sigma^*(-\text{pr}(\omega_2)) p_{m+2}^*\Sigma^*\omega_1\cdots p_{m+n+1}^*\Sigma^*\omega_1),
    \end{equation}
    where $\mu_{(k)}:\Delta_k\times I\to (I^2)^{\times k}$ is defined by
    \[
    \mu_{(k)}(t_1,\cdots,t_k,s)=((1-t_1,s),\cdots,(1-t_k,s)).
    \]

Replacing $\Sigma=\sigma\circ\Theta_2$ in equation (\ref{last2}) we get
\[
\sum_{m,n\geq0}(-1)^{m+n+1}\int_{\Delta_{m+n+1}\times I}(\Theta_2)_{(m+n+1)}^*\left(p_1^*(\sigma^*(\omega_1))\wedge \cdots \wedge p_{m+1}^*(\sigma^*\omega_2)\wedge \cdots \wedge p_{m+n+1}^*(\sigma^*(\omega_1))\right),
\]
which, by degree reasons, is equal to
\[
\sum_{n\geq 0}(-1)^{n}\int_{\Delta_{n}\times I}(\Theta_2)_{(n)}^*\left(p_1^*(\omega)\wedge \cdots \wedge p_{n}^*(\omega_1)\right),
\]
the expression for $\beta_2(\sigma)$.
\end{proof}
\begin{TheoremA}
The map that associates a principal $2$-bundle with connection  to a higher local system is compatible with the construction of holonomies.  More precisely, the following diagram commutes:
  \begin{equation*}\label{comparison}
  \begin{CD}
    \Loc_\infty^{(E,\partial)}(M)@> K>> 2\text{-}\mathcal Bun^f_{\fr}(M)\\
    @V\mathcal{T}VV@VVTV\\
    \text{Rep}_{\leq2}(M,(E,\partial))@>> \mathcal{K}>\text{Rep}_{\leq2}(M,\fr)
  \end{CD}
\end{equation*}
The results above imply our main result
\end{TheoremA}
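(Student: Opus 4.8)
The plan is to verify commutativity of the diagram object-by-object, 1-morphism-by-1-morphism, and 2-morphism-by-2-morphism, since all four functors in the square are defined on the bicategory $\pi_{\leq 2}(M)$ (or on categories of local systems/2-bundles whose structure is controlled by what happens over simplices of dimension $\leq 2$). Concretely, fix an $\infty$-local system $(E,D)\in\Loc_\infty^{(E,\partial)}(M)$. Going right then down produces the $2$-functor $\mathrm{tra}_{\fr(E,\partial),\Omega_D}:\pi_{\leq 2}(M)\to\Gamma\text{-Tor}(\fr)$ of the previous section, built by horizontally lifting paths and bigons using the local differential equations \eqref{poe} and \eqref{soe} for the connection $\Omega_D$ constructed in Theorem \ref{connection}. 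Going down then right produces $\mathcal K\circ\mathcal T(E,D)$, where $\mathcal T$ uses the Chen-iterated-integral holonomies $\mathsf{hol}(\sigma)$ from Theorem \ref{RH} and $\mathcal K$ pushes these forward along the frame bundle. So the task is to match these two $2$-functors.

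First I would check agreement on objects: both send $x\in M$ to the fiber $\fr_x$, which is immediate from the definitions of $K$, $\mathcal K$, $\mathcal T$ and the frame $2$-bundle. Next, on $1$-morphisms (paths $\gamma$): the lower-right composite sends $\gamma$ to the pushforward along the parallel transport $T_\gamma$ of the $\ZZ$-graded connection, while the upper-right composite sends $\gamma$ to the anafunctor $\text{Hol}_\gamma$ associated to $\Omega_D$. Here the key computation is that the differential equation \eqref{poe} for $\Omega_D^a = A_i = \omega_i^1$ is exactly the equation defining parallel transport of the degree-zero part of $D=d-\omega$, so its solution $g_\gamma(1)$ is the $\mathfrak g=\mathrm{gl}^0(V)$-valued holonomy, whose iterated-integral expansion (written out in the second Lemma of the comparison section) is the degree-$1$, form-degree-arbitrary part of $\mathsf{hol}(\gamma)$. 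Both constructions are pushforward-by-left-composition on the frame bundle, so they agree. The honest content at this level is already assembled in the three technical lemmas preceding the theorem, which rewrite the Waldorf-style holonomy integral as a Chen iterated integral and then identify it with $\mathsf{hol}$.

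The main work — and the main obstacle — is the $2$-morphism level. Given a $2$-simplex $\sigma:\Delta_2\to M$ with associated bigon $\Sigma=\sigma\circ\Theta_2$, the upper-right composite gives the $\Gamma$-equivariant transformation $\varphi_\Sigma$, which is governed locally by the surface holonomy ODE \eqref{soe} for $\Omega_D^b,\Omega_D^c$ (with $\varphi_{ij}=0$, $B_i=-\mathrm{pr}\circ\omega_i^2$). The lower-right composite gives $\mathcal K$ applied to $\mathcal T(E,D)(\sigma)$, namely the pushforward of the homotopy $\mathsf{hol}(\sigma)\in\mathrm{gl}^{-1}(V)$, i.e. the form-degree-$\geq 0$, inner-degree-$(-1)$ component of the Chen holonomy. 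The three lemmas \ref{hol} and its predecessors do precisely this bridge: the first integrates \eqref{soe} to express $h(1)$ as a double integral involving $B$ and the inverse path holonomies; the second expands the $g_\Sigma(t)^{-1}$ factors into iterated integrals, producing a sum over insertions of one $b_s=B$-factor among $a_s=A$-factors; the third recognizes that sum, after substituting $\Sigma=\sigma\circ\Theta_2$ and discarding terms that vanish for degree reasons (since $B=-\mathrm{pr}(\omega^2)$ sits in inner degree $-1$ and the $A$'s in inner degree $0$, only the $\omega_1$-then-$\omega_2$-then-$\omega_1$ pattern survives to inner-degree $-1$), as exactly $\mathsf{hol}(\sigma)$. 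I would therefore structure the proof as: (i) objects — trivial; (ii) $1$-morphisms — quote the path-holonomy identification; (iii) $2$-morphisms — invoke the chain of three lemmas to conclude $\varphi_\Sigma$ and $\mathcal K\mathcal T(E,D)(\sigma)$ are the same $\Gamma$-equivariant transformation (both being left-composition by the homotopy whose underlying map is $\mathsf{hol}(\sigma)$, modulo exact maps, which matches the $2$-morphism identification in $GL(E,\partial)$); (iv) note compatibility with vertical/horizontal composition and with the assignment $K$, $\mathcal T$ on morphisms of the respective categories of local systems follows from naturality of all the pushforward constructions. The subtle point to be careful about is bookkeeping of signs and of the reparametrization $\Theta_2$ (the same map appears in Remark \ref{Theta2} and in the definition of $\mathsf{hol}$, as observed in the excerpt), together with checking that the local-to-global patching of Waldorf's construction is compatible with the fact that our cocycle has $a_{ijk}=0$ and $\varphi_{ij}=0$ (Remark \ref{trivialcocycle}), so that no correction terms appear when gluing; granting that, the diagram commutes.
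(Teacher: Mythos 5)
Your proposal is correct and follows essentially the same route as the paper: agreement on objects and on $1$-morphisms is reduced to the coincidence of ordinary path holonomies, and agreement on $2$-morphisms is reduced to the chain of three lemmas culminating in Lemma~\ref{hol}, which identifies the Waldorf surface-holonomy integral with the Chen iterated-integral expression for $\mathsf{hol}(\sigma)$. Your additional remarks on the trivial cocycle data ($a_{ijk}=0$, $\varphi_{ij}=0$) and on compatibility with compositions are sensible elaborations of points the paper leaves implicit, but they do not change the argument.
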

\begin{proof}
  We need to prove that two functors defined on the fundamental 2-groupoid of $M$ coincide. On simplices of dimension one, both functors are defined by
  ordinary holonomies, so they are equal. On the other hand, the fact that the two functors match on 2-morphisms is implied by Lemma \ref{hol}
  \end{proof}

\end{document}